\def\R{{\mathbb R}}
\def\N{{\mathbb N}}
\def\L{{\mathcal L}}
\def\C{{\mathcal C}}
\def\A{{\mathcal A}}
\def\1{{\mathds{1}}}
\newcommand{\mt}{L^\infty(\R, \m)}
\newcommand{\m}{\mathcal{M}_{\alpha, \beta}}
\newcommand{\M}{L^\infty(\R, \m)}
\newcommand{\T}{T^{-(\cdot)}}
\newcommand{\U}{\mathcal{U}}
\newcommand{\lt }{\left|\left|\left|}
\newcommand{\rt}{\right|\right|\right|_{\alpha,\beta}}
\newcommand{\ld }{\left\|}
\newcommand{\rd}{\right\|_{\alpha,\beta}}
\newcommand{\rti}
{\right|\right|\right|_{\alpha,\beta}}
\date{} 
  \DeclareMathOperator*{\sign}{sgn}
\theoremstyle{plain}
\newtheorem{theorem}{Theorem}[section]
\newtheorem{definition}[theorem]{Definition}
\newtheorem{proposition}[theorem]{Proposition}
\newtheorem{lemma}[theorem]{Lemma}
\newtheorem{corollary}[theorem]{Corollary}
\newtheorem{remark}[theorem]{Remark}
\numberwithin{theorem}{section}
\numberwithin{equation}{section}
\numberwithin{figure}{section}
\let\oldtocsection=\tocsection
\let\oldtocsubsection=\tocsubsection
\let\oldtocsubsubsection=\tocsubsubsection
\renewcommand{\tocsection}[2]{\hspace{0em}\oldtocsection{#1}{#2}}
\renewcommand{\tocsubsection}[2]{\hspace{1em}\oldtocsubsection{#1}{#2}}
\renewcommand{\tocsubsubsection}[2]{\hspace{2em}\oldtocsubsubsection{#1}{#2}}
\begin{document}
\bibliographystyle{plain}

\parskip=4pt

\title[]
{Inhomogeneous six-wave Kinetic Equation in exponentially weighted $L^\infty$ spaces}

\author[Nata\v{s}a Pavlovi\'{c}]{Nata\v{s}a Pavlovi\'{c}}
\address{Nata\v{s}a Pavlovi\'{c},  
Department of Mathematics, The University of Texas at Austin.}
\email{natasa@math.utexas.edu}

\author[Maja Taskovi\'{c}]{Maja Taskovi\'{c}}
\address{Maja Taskovi\'{c},  
Department of Mathematics, Emory University}
\email{maja.taskovic@emory.edu}

\author[Luisa Velasco]{Luisa Velasco}
\address{Luisa Velasco,  
Department of Mathematics, The University of Texas at Austin.}
\email{lmvelasco@utexas.edu}
\begin{abstract} 
Six-wave interactions are used for modeling various physical systems, including in optical wave turbulence \cite{bola09} (where a cascade of photons displays this kind of behavior)
and in quantum wave turbulence \cite{kosv04} (for the interaction of Kelvin waves in superfluids).
In this paper, we initiate the analysis of the Cauchy problem for the spatially inhomogeneous six-wave kinetic equation. More precisely, we obtain the existence and uniqueness of non-negative mild solutions to this equation in exponentially weighted $L^\infty_{xv}$ spaces. This is accompanied by an analysis of the long-time behavior of such solutions - we prove that the solutions scatter, that is, they converge to solutions of the transport equation in the limit as $t \to \pm \infty$.
Compared with the study of four-wave kinetic equations, the main challenge we face is to address the increased complexity of the geometry of the six-wave interactions.

\end{abstract}
\maketitle
\tableofcontents

\section{Introduction}

Systems of interacting nonlinear waves are ubiquitous in nature, from quantum mechanical to astrophysical scales. However, these systems involve a large number of interacting waves, at which point it becomes useful to  consider the average behavior of the ensemble rather than that of individual waves.  In particular, one considers an effective equation, called a wave kinetic equation (WKE), that describes the asymptotic dynamics of the energy spectrum (also referred to as the wave spectrum) in the weak nonlinearity limit under certain conditions.  This 
statistical mechanics treatment of weakly nonlinear and dispersive waves is described as wave turbulence. 

One of the first appearances of a WKE can be attributed to Pierels \cite{pi29} in 1929, who formally derived a WKE to describe the evolution of the wave spectrum for phonons in anharmonic crystals. Interest in wave turbulence was renewed in the 1960s in the context of water waves \cite{bene69, besa91, ha62} and plasma physics \cite{gasa79, ve67}. Additionally, new questions arose focused on the role of wave turbulence in the transport of energy across scales, called ``energy cascades". This was led by the pivotal work of Zakharov \cite{za65} in which a new power-law type stationary solution to a WKE was discovered, analogous to the Kolmogorov spectrum of hydrodynamic turbulence. Details of these developments and physical examples can be found in the books of Zakharov, L'vov, and Falkovich \cite{zalvfa92} and Nazarenko \cite{na11}.

Since then the mathematical theory has developed significantly, with a particular focus on rigorous derivations of effective equations from dynamics governed by nonlinear dispersive equations. 

The emergence of the 4-wave spatially homogeneous kinetic equation, 
\begin{align}\label{eq:hom4}
    \partial_t f = \C_4[f] 
\end{align}
with 
\begin{align*}
\C_4[f]: =\int \delta(\Sigma)\delta(\Omega)ff_1f_2f_3\left(\frac{1}{f} + \frac{1}{f_1} - \frac{1}{f_2} - \frac{1}{f_3}\right)dv_1dv_2dv_3,\\
    \Sigma = v + v_1 - v_2 - v_3, \quad \Omega = |v|^2 + |v_1|^2 - |v_2|^2 - |v_3|^2, \quad f_i = f(t,v_i),
\end{align*}
 from the cubic non-linear Schr\"odinger  equation (NLS) has been studied by e.g. Buckmaster,  Germain,  Hani, Shatah \cite{bugehash21}, Collot, Germain  \cite{coge19, coge20} and Deng, Hani \cite{deha21,deha21_2,deha23}. We also mention a recent result of Deng, Hani \cite{deha23_2} that establishes the derivation of the homogeneous 4-wave kinetic equation from random data as long as the wave kinetic equation itself is well-posed. On the other hand, the spatially homogeneous 3-wave kinetic equation, 
 \begin{align*}
    \partial_t f = \C_3[f],
\end{align*}
(where $\C_3$ describes the effect of 3-wave interactions on the wave spectrum)
 was derived up to kinetic time \footnote{which refers to the time scale for which the kinetic behavior is expected} by Staffilani, Tran \cite{sttr21} beginning from a stochastic Zakharov-Kuznetsov equation -- a multidimensional generalization of the 1D Korteweg–De Vries equation.

There are fewer results available in the spatially inhomogeneous setting,
\begin{align}\label{eq:inhom}
\partial_tf + v\cdot \nabla f = \C_i[f],\quad i =3,4,
\end{align}
with the majority of the existing work focusing on the 3-wave kinetic equation. In particular, the spatially inhomogeneous 3-wave kinetic equation \eqref{eq:inhom} with $i=3$ was derived up to arbitrarily small polynomial loss of kinetic time by Ampatzoglou, Collot and Germain \cite{amcoge24} from Schr\"odinger type quadratic nonlinearities. Building on \cite{sttr21}, Hannani, Rosenzweig, Staffilani, and Tran \cite{harosttra22} derived the spatially inhomogeneous 3-wave kinetic equation \eqref{eq:inhom} with $i=3$   up to kinetic time from a stochastic Zakharov-Kuznetsov equation. %

While the attention of the wave turbulence community has been largely devoted to the derivation of WKE, the analysis of the WKE themselves is less developed. Germain, Ionescu, Tran \cite{geiotr20} proved local well-posedness of the spatially homogeneous 4-wave kinetic equation \eqref{eq:hom4} in polynomially weighted $L^2_v$ and $L^\infty_v$ spaces. In the case of the spatially inhomogeneous WKE, Ampatzoglou \cite{am20} proved small data global well-posedness of the 4-wave kinetic equation \eqref{eq:inhom} with $i=4$ in exponentially weighted $L_{x,v}^\infty$ spaces. The well-posedness theory for the same equation was then expanded by Ampatzoglou, Miller, and the first two authors of this paper in the work \cite{ammipata24} which proved well-posedness of the inhomogeneous 4-wave kinetic equation and its hierarchy in polynomially weighted $L^\infty_{x,v}$ spaces. Most recently, Ampatzoglou, L\'{e}ger \cite{amle24} continued the analysis of this equation using dispersive techniques to prove an existence result in $L^1_{x,v}$ intersected with $L^\infty_{x,v}$ spaces polynomially weighted in velocity, and to study the asymptotic behavior of solutions.

The aim of this paper is to study a Cauchy problem for the 6-wave kinetic equation (6-WKE), which, 
for a function $f: \R\times \R^d \times \R^d \to \R$ and initial data $f_0: \R^d \times \R^d \to \R$, is given by 
\begin{equation}\label{eq:6wave}\begin{cases}
    \partial_t f + v\cdot \nabla_x f = \C_6[f],\\
    f(t = 0) = f_0.
\end{cases}  
\end{equation}
Here the collision operator is defined by
\begin{align}\label{collision_op}
    \C_6[f](t,x,v) = \int_{\R^{5d}}\delta(\Sigma)\delta(\Omega)ff_1f_2f_3f_4f_5\left(\frac{1}{f} + \frac{1}{f_1} + \frac{1}{f_2} - \frac{1}{f_3} - \frac{1}{f_4} - \frac{1}{f_5}\right)dv_1dv_2dv_3dv_4dv_5,
\end{align}
where the resonant manifolds are given by  
\begin{align}\label{res_manifolds}
    \Sigma = v + v_1 + v_2 - v_3 - v_4 - v_5, \quad \Omega = |v|^2 + |v_1|^2 + |v_2|^2 - |v_3|^2 - |v_4|^2 - |v_5|^2
\end{align}
and we use the notation $f = f(t,x,v)$ and $f_i = f(t,x,v_i)$ for $i = 1,2,3,4,5$.

 Six-wave interactions are relevant for the task of modeling various physical systems. Indeed, in  \cite{bola09} Bortolozzo,  Laurie,  Nazarenko report the first experimental evidence of optical wave turbulence, observing a cascade of photons from a system characterized by six-wave interactions. Moreover, as described in \cite{bola09}, this behavior is predicted by the one dimensional optical wave turbulence theory, as one dimensional systems whose governing equations correspond to dispersion relation $\omega = k^2$ can be shown to lack 4-wave resonant interactions, allowing the next order quintic terms to control the dynamics. Additionally, six-wave interactions find application in modeling quantum wave turbulence, specifically the interaction of Kelvin waves in superfluid, see Kozik, Svistunov \cite{kosv04}.

With this physical motivation, Banks, Buckmaster, Korotkevich, Kova\v{c}i\v{c}, Shatah \cite{babu22} investigated the parameter regimes for which the average dynamics of a wave ensemble is accurately modeled by the homogeneous 6-WKE for a system of finite size via formal derivation from the defocusing quintic NLS and numerical analysis.  Additionally, the work of de Suzzoni \cite{su22} makes progress towards a rigorous derivation of the homogeneous 6-wave kinetic equation. 

However, to the best of our knowledge, there has not been any work on existence, uniqueness, and long-time behavior of solutions to the Cauchy problem for the 6-WKE \eqref{eq:6wave}. This paper aims to fill in this gap. More precisely, we prove global existence and uniqueness of solutions to \eqref{eq:6wave} when $d = 1$ in \textit{exponentially weighted $L^\infty$ spaces} (see \eqref{spacetime} for the precise definition of this space and Theorems \ref{thm:wke well posed} - \ref{thm:KSnonnegative} for the well-posedness result). Moreover, we remark that Theorems \ref{thm:nonnegativity} - \ref{thm:KSnonnegative} yield the existence of nonnegative solutions which are physically admissible \footnote{The wave kinetic equation governs the evolution of the energy spectrum -- given by the expectation of the square of the absolute value of the Fourier mode  -- and so we expect physical solutions to be non-negative.}.  Also, inspired by results on long-time behavior of solutions to dispersive equations (see e.g. Cazenave\cite{Cazenave}) and the Boltzmann equation (see Bardos, Gamba, Golse, Levermore \cite{baga16}),
we extend our analysis to include the asymptotic behavior of solutions. 
In particular, we prove that solutions scatter, that is, they converge to solutions of the transport equation in the limit as $t \to \pm \infty$ (see Theorems \ref{thm: scattering 1} and \ref{thm:scattering 2}).

 Due to the higher order of the nonlinearity as compared to the 3 and 4-wave kinetic equations, there is increased complexity in the geometry of the wave interactions encoded in the resonant manifolds \eqref{res_manifolds}. Addressing this key difficulty informed our choice of the solution space and required careful attention in the formulation of crucial a priori bounds used in proofs of the above results. In particular, we use a parametrization inspired by the one for three-particle collisions present in the ternary Boltzmann equation \cite{ampa21}.

\subsection*{Organization of the paper.}
In Section \ref{sec:main results} we define the spaces used in this paper, as well as the notion of the solution, and we state our main results. The key representation lemma (Lemma \ref{lem:I}) and a priori estimates on the collision operator are presented in Section \ref{sec:prep}, while existence and uniqueness of solutions is obtained via a fixed point argument in Section \ref{sec:GWP}. In addition, in Section \ref{sec:GWP}, by applying a fixed point argument centered around a Maxwellian we obtain existence and uniqueness of non-negative solutions.
Section \ref{sec:KS} gives another proof of existence and uniqueness of non-negative solutions (under fewer parameter constraints  than the argument provided in Section \ref{sec:GWP}) using Kaniel-Shinbrot method. The long-time behavior of our solutions is identified in Section \ref{sec:scattering}.
 
\subsection*{Acknowledgements.}  N.P. gratefully acknowledges support from the NSF under grants No. DMS-1840314, DMS-2009549 and DMS-2052789. M.T. gratefully acknowledges support from the NSF grant DMS-2206187.  L.V. gratefully acknowledges support from the NSF via Graduate Research Fellowship Program. We also thank Simons Collaboration on Wave Turbulence for organizing annual meetings and summer schools that inspired our interest in this topic.

\section{Function spaces and main results} \label{sec:main results}
In this section, we define our working function spaces and present the statements of our main theorems. 

\subsection{Function Spaces}
We now introduce the function spaces that will be used throughout this paper. First, we define sets of space-velocity functions, 
\begin{align*}
    F_{x,v}&: = \{f: \R^d \times \R^d \to \R: f \text{ is measurable}\},\\
    F_{x,v}^+&: = \{f: f \in F_{x,v}: f(x,v) \geq 0\},\\
    L^{1,+}_{x,v}&: = L^1_{x,v}\cap F_{x,v}^+,
\end{align*}
where $L^1_{x,v}$ denotes the Lebesgue space in $x$ and $v$.
Now, 
let us define sets of time dependent functions
\begin{align*}
    \mathcal{F} &:= \{f:\R \to F_{x,v}\},\\
    \mathcal{F}^+&:= \{f: \R \to F^+_{x,v}\}.
\end{align*}
Finally, we define the spaces we will work with - exponentially weighted $L^\infty$ spaces - and that will be used in the well-posedness and scattering results for \eqref{eq:6wave}. 
\begin{definition}
    Given $\alpha,\beta > 0$, we define the space 
    \begin{align}\label{space}
    \m := \left\{f \in F_{x,v}: \ld f\rd < \infty\right\},
    \end{align}
    where we define the norm as
    \begin{align} 
    \ld f\rd: = \sup_{x,v}|f(x,v)|e^{\alpha|x|^2 + \beta|v|^2}.
    \end{align}
    Further, we define non-negative functions bounded by a Maxwellian as follows 
    \begin{align}
        \m^+ = \m \cap F_{x,v}^+.
    \end{align}
     Additionally, 
     we define a family of function spaces in time as 
     \begin{align}\label{spacetime}
         \mt : = \{f \in \mathcal{F}:\lt f \rt < \infty\},
     \end{align} where we use the following norm 
     \begin{align}
         \lt f \rt: = \sup_{ t \in \R}\ld f(t)\rd.
     \end{align}
     We also define 
    \begin{align}
        \M^+ = \M \cap \mathcal{F}^+.
    \end{align}
\end{definition}

Before we define a notion of solution, we introduce the transport operator $(T^s)_{s \in \R}$ acting on function $g:\R \times \R^d \times \R^d \rightarrow \R$ as follows: 
\begin{align}\label{eq:transport}
    T^sg(t,x,v): = g(t,x - sv, v).
\end{align}
Throughout the paper we  will also be using an abbreviated notation
\begin{align}\label{abbr}
    T^{-(\cdot)}g := T^{-t} g(t,x,v).
\end{align}

With this notation, we can formally rewrite the wave kinetic equation \eqref{eq:6wave} via Duhamel's formula as follows: 
\begin{align}
    f(t) = T^{t}f_0 + \int_0^t T^{t-s}\C[f](s)\ ds,\quad t \in \R,
\end{align}
or equivalently
\begin{align}\label{T-Duhamel}
    T^{-t}f(t) = f_0 + \int_0^t T^{-s}\C[f](s)\ ds,\quad t \in \R.
\end{align}
This last expression \eqref{T-Duhamel} is used to define a notion of a mild solution. This type of solution is used in e.g. \cite{am20, amga22}.

\begin{definition}\label{def:solution}
    Let $\alpha,\beta > 0$.
    We consider initial data $f_0 \in \m$. A measurable function $f:\R\times \R^d\times \R^d \to \R$ is called a mild solution of (\ref{eq:6wave}) corresponding to initial data $f_0$ if 
 $
        T^{-t}f(t) \in  L^\infty(\R, \m),   
$    
and 
\begin{align}
    T^{-t}f(t,x,v) = f_0(x,v) + \int_0^t T^{-s}\C[f](s,x,v)\ ds,\quad t \in \R.
\end{align}
\end{definition}
\subsection{Main results}

We start by stating our results for the 6-wave kinetic equation on the existence of global solutions for small data as well as the existence of non-negative solutions -- this is the content of the next two theorems. 

\begin{theorem}[Global well-posedness]\label{thm:wke well posed}
    Let $\alpha,\beta > 0$, $d = 1$ and $0 < R_e \leq \frac{\alpha^{1/8}}{2^{\frac{7}{2}}C_{1,\beta}^{1/4}}$, where $C_{1,\beta} > 0$ is given by \eqref{conv_constant}.
    \begin{enumerate}[label= (\alph*)]
        \item  Let $f_0 \in \m$ with $\ld f_0\rd \leq R_e$. Then equation (\ref{eq:6wave}) has a unique mild solution $f$ (in the sense of definition \ref{def:solution}) satisfying the bound 
    \begin{align}\label{sol bound}
    \lt T^{-t}f(t)\rt \leq 2R_e. 
    \end{align}
        \item Moreover, if $f_0,g_0 \in \m$ with $\ld f_0\rd,\ld g_0\rd \leq R_e$, and $f$ and $g$ are the corresponding mild solutions to (\ref{eq:6wave}) satisfying the bound \eqref{sol bound} , then following stability estimate holds: 
    \begin{align}\label{stability}
    \lt T^{-t}f(t) - T^{-t}g(t)\rt \leq 2\ld f_0 - g_0\rd.
    \end{align}
    \end{enumerate}
where $T^{-t}$ is defined in \eqref{eq:transport}.
\end{theorem}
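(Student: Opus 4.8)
The plan is to recast the Duhamel identity \eqref{T-Duhamel} as a fixed point equation and apply the Banach fixed point theorem, using the a priori bounds on $\C_6$ that are established in Section \ref{sec:prep}. Set $R:=2R_e$ and let
\[
X_{R}:=\Big\{\, f\in \mathcal{F}\ :\ \T f\in \M,\ \ \lt \T f\rt\le R\,\Big\},
\]
which, equipped with the metric $(f,g)\mapsto \lt \T f-\T g\rt$, is complete: it is a closed ball in the normed space $\big(\{f:\T f\in\M\},\ \lt\T(\cdot)\rt\big)$, and $f\mapsto\T f$ is a norm-preserving bijection onto the Banach space $\M$. On $X_R$ define the solution map
\[
\Phi(f)(t):=T^{t}f_0+\int_0^t T^{t-s}\,\C_6[f](s)\,ds ,\qquad t\in\R .
\]
Applying $T^{-t}$ and using $T^{-t}T^{t}=\mathrm{id}$, $T^{-t}T^{t-s}=T^{-s}$, a function $f$ is a mild solution in the sense of Definition \ref{def:solution} obeying \eqref{sol bound} if and only if it is a fixed point of $\Phi$ in $X_R$. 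Note that although \eqref{collision_op} is written with the factor $\big(\tfrac1f+\cdots-\cdots\big)$, distributing it through $ff_1f_2f_3f_4f_5$ turns $\C_6$ into an honest \emph{quintic} multilinear operator -- a sum of six integrals, each of a product of five copies of $f$ evaluated at distinct velocities against $\delta(\Sigma)\delta(\Omega)$ -- so the nonlinear estimates below are available with no positivity assumption on $f$.

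The two things to check are that $\Phi$ maps $X_R$ into itself and that it is a contraction there; both rest on a single nonlinear estimate coming from the representation lemma (Lemma \ref{lem:I}) and the collision bounds of Section \ref{sec:prep}, of the schematic form
\[
\sup_{t\in\R}\ \Big\|\int_0^t T^{-s}\,\C_6[f](s)\,ds\Big\|_{\alpha,\beta}\ \le\ A_{\alpha,\beta}\,\lt \T f\rt^{5},
\]
where $A_{\alpha,\beta}$ is a numerical multiple of $C_{1,\beta}$ from \eqref{conv_constant} divided by a power of $\alpha$ (here $d=1$ is used, and the point is that conjugating $\C_6$ by the transport operators converts the Gaussian weight $e^{\alpha|x|^2}$ into a factor integrable in $s$ over all of $\R$, so the Duhamel integral converges). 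Granting this, for $f\in X_R$ we get $\lt\T\Phi(f)\rt\le \ld f_0\rd+A_{\alpha,\beta}(2R_e)^5\le 2R_e$, the last step being exactly where the hypothesis $R_e\le \alpha^{1/8}/(2^{7/2}C_{1,\beta}^{1/4})$ enters, as it forces $A_{\alpha,\beta}(2R_e)^4\le \tfrac12$. For the contraction, multilinearity lets one telescope $\C_6[f]-\C_6[g]$ into terms each carrying one factor $f-g$ (at some velocity) and four factors drawn from $f$ or $g$, so the same estimate yields
\[
\lt \T\Phi(f)-\T\Phi(g)\rt\ \le\ C'_{\alpha,\beta}\,\big(\max\{\lt\T f\rt,\lt\T g\rt\}\big)^{4}\,\lt\T f-\T g\rt\ \le\ \tfrac12\,\lt\T f-\T g\rt
\]
for $f,g\in X_R$, again by the smallness of $R_e$ once the explicit constants of Section \ref{sec:prep} are inserted.

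By the Banach fixed point theorem $\Phi$ has a unique fixed point $f\in X_R$; this is the asserted mild solution satisfying \eqref{sol bound}, it is measurable as the $\lt(\cdot)\rt$-limit of the measurable Picard iterates, and the Duhamel integral is well defined because the same bound controls $\int_0^t\|T^{-s}\C_6[f](s)\|_{\alpha,\beta}\,ds$. Uniqueness within the class of mild solutions obeying \eqref{sol bound} is immediate, since any such solution lies in $X_R$ and is a fixed point of $\Phi$. Finally, part (b) follows by applying the contraction bound to the two Duhamel identities: if $f,g$ solve \eqref{eq:6wave} with data $f_0,g_0$ and satisfy \eqref{sol bound}, then
\[
\T f-\T g=(f_0-g_0)+\int_0^{(\cdot)}T^{-s}\big(\C_6[f]-\C_6[g]\big)(s)\,ds ,
\]
so $\lt\T f-\T g\rt\le \ld f_0-g_0\rd+\tfrac12\lt\T f-\T g\rt$, which rearranges to \eqref{stability}.

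The fixed point scheme itself is routine; the real obstacle is the nonlinear estimate it is built on. One must understand the six-wave resonant set $\{\Sigma=0,\ \Omega=0\}$ from \eqref{res_manifolds} -- whose geometry is substantially more involved than in the $3$- and $4$-wave cases -- well enough to bound the weighted, $\delta$-constrained fivefold integral defining $\C_6$, and then see that composing with the free transport turns the $x$-Gaussian into an $s$-Gaussian so that the time integral over $\R$ is finite with a constant of the stated form. This is precisely where one needs $d=1$ and the ternary-collision-type parametrization of the resonant manifold alluded to in the introduction; these are carried out in Lemma \ref{lem:I} and the estimates of Section \ref{sec:prep}, which I would simply invoke.
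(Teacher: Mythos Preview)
Your proposal is correct and is essentially the paper's proof: a Banach fixed point argument on the ball of radius $2R_e$, relying on the quintic a priori estimate for the transported collision operator from Section \ref{sec:prep} (packaged there as Proposition \ref{prop:gl_bounds} and Lemma \ref{lem:lambda}). The only cosmetic difference is that the paper works directly with $g(t)=T^{-t}f(t)$ in $\M$ and the map $g\mapsto f_0+\Lambda_{0,t}[g]$, whereas you phrase the same thing as a map on $f$ with the pulled-back metric; the paper also records the slightly sharper contraction factor $\tfrac14$ rather than $\tfrac12$, but this is immaterial.
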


\begin{theorem}[Existence and uniqueness of non-negative solutions]\label{thm:nonnegativity}
Let 
$\alpha,\beta > 0$, $d = 1$, and let
$C_{1,\beta}$ be given by \eqref{conv_constant}.
Furthermore, assume that 
\begin{align}\label{condition 1}
    C_{1,\beta}\alpha^{-1/2} < \left(\frac{3}{16}\right)^4,
\end{align}
and let $R_p>0$ be such that
\begin{align}\label{condition 2}
  \frac{1}{6} \leq R_p \leq   \frac{\alpha^{1/8}}{2^3\sqrt[4]{ C_{1,\beta}}} - \frac{1}{2}.
\end{align}
Let $M$ be the Gaussian given by 
\begin{align}
    M(x,v) = e^{-\alpha |x|^2 - \beta |v|^2}.
\end{align}
Then we have the following
\begin{enumerate}[label = (\alph*)]
    \item If $f_0 \in \m$ is such that
\begin{align}
    \ld f_0 - M\rd < R_p,
\end{align}
then there exists a unique mild solution $f$ of the 6-wave kinetic equation \eqref{eq:6wave} corresponding to the initial data $f_0$ satisfying the bound
\begin{align}
    \lt T^{-t}f(t) - M\rt \leq 2R_p.
\end{align} 
\item Moreover, if $\frac{1}{2^{12}} < C_{1,\beta} \alpha^{-1/2} < \left(\frac{3}{16}\right)^4$, then the mild solution $f$ is non-negative.

\end{enumerate}
\end{theorem}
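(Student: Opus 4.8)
The plan is to prove Theorem \ref{thm:nonnegativity} by a contraction mapping argument centered at the Gaussian $M$, followed by a separate argument to upgrade the fixed point to a non-negative function. For part (a), I would work in the affine ball $\mathcal{B} = \{ f \in \M : \lt T^{-t}f - M \rt \leq 2R_p \}$ and study the Duhamel map $\Phi(f)(t) := T^t f_0 + \int_0^t T^{t-s}\C_6[f](s)\,ds$, or rather its transported version. The first step is to observe that $M$ is (up to the transport flow) a near-stationary reference: since the collision operator is degree-$6$ homogeneous with the $(\tfrac1f+\tfrac1{f_1}+\tfrac1{f_2}-\tfrac1{f_3}-\tfrac1{f_4}-\tfrac1{f_5})$ factor, $\C_6[M]$ is controlled by the a priori estimates from Section \ref{sec:prep} (the representation Lemma \ref{lem:I} gives a bound of the form $\ld \C_6[g]\rd \lesssim C_{1,\beta}\alpha^{-1/2}\ld g\rd^6 / (\text{weight})$, schematically). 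Writing $f = M + h$, I would expand $\C_6[M+h] - \C_6[M]$ and bound it using multilinearity: each term is a product of six factors drawn from $\{M, h\}$ with at least one $h$, so $\ld \C_6[M+h]-\C_6[M]\rd \lesssim C_{1,\beta}\alpha^{-1/2}\sum_{k=1}^{6}\binom{6}{k}(1)^{6-k}(2R_p)^k$, and similarly for differences $\C_6[M+h_1]-\C_6[M+h_2]$ one factors out $\ld h_1 - h_2\rd$ times a polynomial in $1$ and $2R_p$ of degree $5$. Integrating the Duhamel formula in time and using $\sup_t |\int_0^t \ldots| \leq \|\cdot\|$ (the key point being that the transport operator is an isometry on $\m$ and the time integral of the weighted collision bound converges — this is where the Gaussian weight in $x$ does the work, exactly as in \cite{am20}), the conditions \eqref{condition 1}–\eqref{condition 2} are precisely what make the resulting polynomial inequalities close: condition \eqref{condition 1} gives smallness of the Lipschitz constant (forcing it below $1/2$, say), and the upper bound in \eqref{condition 2} guarantees $\Phi$ maps $\mathcal{B}$ into itself while the lower bound $R_p \geq 1/6$ will be needed only for part (b). Then Banach fixed point yields existence and uniqueness of the mild solution in $\mathcal{B}$.

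For part (b), the goal is to show the fixed point $f$ is non-negative under the extra two-sided bound $\tfrac{1}{2^{12}} < C_{1,\beta}\alpha^{-1/2} < (\tfrac{3}{16})^4$. The natural strategy is a Kaniel–Shinbrot type iteration (anticipating Section \ref{sec:KS}): split the gain and loss parts of $\C_6$ as $\C_6 = \C_6^+ - \C_6^-$ where $\C_6^-[f] = ff_1f_2f_3f_4f_5(\tfrac1{f_3}+\tfrac1{f_4}+\tfrac1{f_5})\delta(\Sigma)\delta(\Omega)$ integrated — note $\C_6^-[f] = \nu[f]\,f$ with $\nu[f] \geq 0$ a linear-in-$f$-times-quartic loss frequency when $f \geq 0$ — and build monotone sequences $\{l_n\}$, $\{u_n\}$ of sub/super-solutions sandwiching the Duhamel iteration, with $l_0 = 0$. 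One checks that $0 \leq l_n \leq l_{n+1} \leq \ldots \leq u_{n+1} \leq u_n$ with both sequences converging to the same limit, which by uniqueness must be $f$; non-negativity then passes to the limit. Alternatively — and this is likely the route matching the statement's constants — one uses the exponential form of the mild solution along characteristics, $T^{-t}f(t) = f_0 + \int_0^t (T^{-s}\C_6^+[f] - \nu[f]T^{-s}f)\,ds$, rewrites it with an integrating factor $e^{\int_0^s \nu[f]}$, and runs a monotone iteration whose positivity is preserved provided the iterates stay in a ball where $f_1,\ldots,f_5$ are positive; the lower bound $R_p \geq 1/6$ together with $C_{1,\beta}\alpha^{-1/2} > 2^{-12}$ ensures the ball $\lt T^{-t}f - M\rt \leq 2R_p$ is small enough, relative to $M$ itself, that $f = M + h$ with $\ld h \rd e^{\alpha|x|^2+\beta|v|^2} < 2R_p$ cannot overwhelm $M(x,v)$ pointwise where it matters — but since $M$ decays, positivity cannot be read off directly from the ball and genuinely requires the iteration.

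The main obstacle I anticipate is exactly this last point: unlike in problems where the reference state is bounded below by a positive constant, the Gaussian $M$ tends to $0$ as $|x|,|v|\to\infty$, so membership in the ball $\lt T^{-t}f - M\rt \leq 2R_p$ does \emph{not} imply $f \geq 0$ pointwise — at large $(x,v)$ the error term $h$, though small in weighted norm, dominates $M$ in absolute value. Thus part (b) cannot be a soft consequence of the contraction and must extract positivity from the structure of the collision operator itself, tracking how the gain term $\C_6^+[f]$ (which is manifestly non-negative when $f\geq 0$) competes against the loss; getting the monotone iteration to close within the \emph{same} ball used in part (a) — so that the limit is forced by uniqueness to equal $f$ — is where the precise numerology $\tfrac{1}{2^{12}} < C_{1,\beta}\alpha^{-1/2} < (\tfrac{3}{16})^4$ and $R_p \geq \tfrac16$ will have to be spent. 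A secondary technical point is handling the $\delta(\Sigma)\delta(\Omega)$ on the resonant manifold \eqref{res_manifolds} in $d=1$: one must use the ternary-collision parametrization alluded to in the introduction (cf. \cite{ampa21}) to reduce the $5d = 5$-dimensional integral against two delta constraints to an absolutely convergent integral with the claimed constant $C_{1,\beta}$ from \eqref{conv_constant}, but since Lemma \ref{lem:I} and the associated a priori estimates are already available from Section \ref{sec:prep}, this is invoked rather than redone.
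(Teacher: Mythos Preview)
Your plan for part (a) is essentially the paper's: a contraction mapping on the ball $\overline{B}_M(2R_p)=\{g:\lt g-M\rt\le 2R_p\}$, using Proposition \ref{prop:gl_bounds} to bound the Duhamel integral by $24\,C_{1,\beta}\alpha^{-1/2}(2R_p+1)^5$ (self-map) and $2^7 C_{1,\beta}\alpha^{-1/2}(2R_p+1)^4$ (Lipschitz). Two small corrections: the collision operator is degree $5$, not $6$ (the factor $\tfrac1f+\dots$ cancels one power); and the lower bound $R_p\ge 1/6$ is used in part (a), not (b) --- it is exactly what lets you derive the self-map inequality $24\,C_{1,\beta}\alpha^{-1/2}(2R_p+1)^5\le R_p$ from the contraction inequality via $\tfrac{1}{8}(2R_p+1)\le R_p$.

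Your treatment of part (b), however, contains a genuine error. You write that ``membership in the ball $\lt T^{-t}f-M\rt\le 2R_p$ does \emph{not} imply $f\ge 0$ pointwise --- at large $(x,v)$ the error term $h$, though small in weighted norm, dominates $M$ in absolute value.'' This is false, and the misconception drives your whole plan for (b). By definition of the norm,
\[
\lt T^{-t}f-M\rt = \sup_{t,x,v}\,|T^{-t}f(t,x,v)-M(x,v)|\,e^{\alpha|x|^2+\beta|v|^2},
\]
so $\lt T^{-t}f-M\rt\le 2R_p$ says precisely $|T^{-t}f-M|\le 2R_p\,e^{-\alpha|x|^2-\beta|v|^2}=2R_p\,M$ \emph{pointwise for all} $(x,v)$. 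The error never dominates $M$; it is uniformly bounded by a multiple of $M$. Hence $T^{-t}f\ge (1-2R_p)M$, and non-negativity is immediate as soon as $2R_p\le 1$.

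This is exactly the paper's argument: the extra hypothesis $C_{1,\beta}\alpha^{-1/2}>2^{-12}$ combined with the upper bound in \eqref{condition 2} forces
\[
R_p\le \frac{1}{2}\Big(\frac{1}{2^2\,(C_{1,\beta}\alpha^{-1/2})^{1/4}}-1\Big)<\frac{1}{2}\Big(\frac{1}{2^2\cdot 2^{-3}}-1\Big)=\frac{1}{2},
\]
and then $T^{-t}f\ge(1-2R_p)M\ge 0$. No Kaniel--Shinbrot iteration, no integrating factor, no gain/loss splitting is needed here; that machinery is reserved for Theorem \ref{thm:KSnonnegative}, which proves non-negativity \emph{without} the two-sided restriction on $C_{1,\beta}\alpha^{-1/2}$.
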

  
The proofs of Theorem \ref{thm:wke well posed} and Theorem \ref{thm:nonnegativity} are based on a fixed point argument with the additional restrictions on $\alpha,\beta$ in Theorem \ref{thm:nonnegativity} providing sufficient conditions for the existence of non-negative solutions. However, we can obtain both global well-posedness and non-negativity results without these additional restrictions on $\alpha,\beta$ by employing the Kaniel-Shinbrot iteration method that was used previously  for establishing well-posedness results  for various kinetic equations in \cite{kash78, ilsh84, beto85, to88, beto87, pato89, al09, alga09,st10, amga22}. This is the content of the following theorem.

\begin{theorem}[Kaniel-Shinbrot] \label{thm:KSnonnegative}
    Let $\alpha,\beta > 0$, $d = 1$,
    and $0 < R_{ks} \leq \frac{19}{20}\left(\frac{\alpha^{1/8}}{\sqrt[4]{240C_{1,\beta}}}\right)$ where $C_{1,\beta} > 0$ is given by \eqref{conv_constant}. If $f_0 \in \m^+$ with $||f_0||_{\alpha,\beta} \leq R_{ks}$, then equation \eqref{eq:6wave} has a unique non-negative mild solution (in the sense of definition \ref{def:solution}).
\end{theorem}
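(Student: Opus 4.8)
The plan is to run the Kaniel--Shinbrot (KS) monotone iteration, which first requires splitting the sextic collision operator into an order-preserving gain term and a loss term of the form $f\cdot\nu[f]$ with \emph{non-negative} collision frequency $\nu$. Expanding the bracket in \eqref{collision_op} and grouping the three negative contributions, one writes $\C_6[f]=\mathcal G[f]-f\,\nu[f]$ with
\begin{align*}
\mathcal G[f] &:=\int_{\R^{5}}\delta(\Sigma)\delta(\Omega)\big(f_1f_2f_3f_4f_5+f\,f_2f_3f_4f_5+f\,f_1f_3f_4f_5\big)\,dv_1\cdots dv_5,\\
\nu[f] &:=\int_{\R^{5}}\delta(\Sigma)\delta(\Omega)\big(f_1f_2f_4f_5+f_1f_2f_3f_5+f_1f_2f_3f_4\big)\,dv_1\cdots dv_5.
\end{align*}
The point of keeping the two $f$-dependent positive terms inside $\mathcal G$ is that then both $f\mapsto\mathcal G[f]$ and $f\mapsto\nu[f]$ are order preserving and non-negativity preserving on $\m^+$; absorbing them into $\nu$ would destroy monotonicity of $\nu$. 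Writing $M(x,v)=e^{-\alpha|x|^2-\beta|v|^2}$ (as in Theorem~\ref{thm:nonnegativity}), I would then set up the KS iterates: take $\ell_0:=0$ and the explicit Maxwellian barrier $u_0(t,x,v):=\lambda\,e^{-\alpha|x-tv|^2-\beta|v|^2}=\lambda\,T^{t}M(x,v)$, so that $T^{-t}u_0(t)\equiv\lambda M$, with $\lambda:=\tfrac{20}{19}R_{ks}$, and for $n\ge0$ let $(\ell_{n+1},u_{n+1})$ solve the linear transport--absorption problems
\begin{align*}
T^{-t}\ell_{n+1}(t) &=f_0+\int_0^tT^{-s}\big(\mathcal G[\ell_n]-\ell_{n+1}\,\nu[u_n]\big)(s)\,ds,\\
T^{-t}u_{n+1}(t) &=f_0+\int_0^tT^{-s}\big(\mathcal G[u_n]-u_{n+1}\,\nu[\ell_n]\big)(s)\,ds.
\end{align*}
Pointwise in $(x,v)$ each of these is a scalar linear ODE for $t\mapsto T^{-t}\ell_{n+1}(t)$ (resp.\ $T^{-t}u_{n+1}(t)$) with non-negative damping rate, hence uniquely solvable by an integrating-factor formula; the a priori bounds of Section~\ref{sec:prep} applied to $u_0$ make the relevant $x,v$- and $s$-integrals finite, and positivity of $f_0,\mathcal G[\ell_n],\nu[u_n]$ forces $\ell_{n+1}\ge0$.

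The heart of the proof is verifying the KS \emph{beginning condition} $0=\ell_0\le\ell_1\le u_1\le u_0$. Since $\nu[\ell_0]=\nu[0]=0$ one has simply $T^{-t}u_1(t)=f_0+\int_0^tT^{-s}\mathcal G[u_0](s)\,ds$, so $u_1\le u_0$ reduces to the \emph{pointwise} super-solution estimate
\begin{align*}
0\le\int_0^tT^{-s}\mathcal G[u_0](s)(x,v)\,ds\le\kappa\,C_{1,\beta}\,\alpha^{-1/2}\,\lambda^{5}\,e^{-\alpha|x|^2-\beta|v|^2}\qquad(t\ge0)
\end{align*}
for an explicit constant $\kappa$; proving this is exactly where the representation Lemma~\ref{lem:I} (needed because $T^{-s}$ does not commute with $\C_6$) together with the Gaussian-weighted bounds on the six-wave resonant integrals — via the ternary-Boltzmann-type parametrization of Section~\ref{sec:prep} — do the work, and where the complicated geometry of the resonant manifolds \eqref{res_manifolds} must be controlled. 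Granting it, a short computation using the hypothesis $R_{ks}\le\tfrac{19}{20}\,\alpha^{1/8}/\sqrt[4]{240\,C_{1,\beta}}$ (whose numerical constants are chosen precisely for this step, making $240\,C_{1,\beta}\alpha^{-1/2}\lambda^{4}\le1$) yields $T^{-t}u_1(t)(x,v)\le(R_{ks}+\kappa C_{1,\beta}\alpha^{-1/2}\lambda^{5})e^{-\alpha|x|^2-\beta|v|^2}\le\lambda M(x,v)=T^{-t}u_0(t)(x,v)$; together with $\ell_1\ge0=\ell_0$ and $\ell_1\le u_1$ (the maximum principle for the linear transport equation applied to $u_1-\ell_1$, using $\mathcal G[u_0]\ge\mathcal G[\ell_0]=0$ and $\ell_1\nu[u_0]\ge0$) this gives the beginning condition. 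The standard KS induction then propagates it: if $\ell_{n-1}\le\ell_n\le u_n\le u_{n-1}$, then the two identities above, monotonicity of $\mathcal G$ and $\nu$, and the comparison principle for the transport--absorption equation give $\ell_n\le\ell_{n+1}\le u_{n+1}\le u_n$. Hence $\ell_n\uparrow\ell$ and $u_n\downarrow u$ pointwise, with $0\le\ell\le u\le u_0$.

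Finally, domination by $u_0$ and its Gaussian weight justifies dominated convergence both in the $(v_1,\dots,v_5)$-integrals and in $\int_0^t\,ds$ (the latter using the time-integrability encoded in the Section~\ref{sec:prep} estimates), so one may let $n\to\infty$ in the iteration identities and conclude that $\ell$ and $u$ each satisfy $T^{-t}w(t)=f_0+\int_0^tT^{-s}\C_6[w](s)\,ds$; thus both are non-negative mild solutions lying in $\m^+$ with $\lt T^{-t}\ell(t)\rt,\lt T^{-t}u(t)\rt\le\lambda$. To close the scheme one shows $\ell=u$: subtracting the two identities (which share the same initial data $f_0$) and invoking the Lipschitz (stability-type) estimates for $\C_6$ from Sections~\ref{sec:prep}--\ref{sec:GWP} in the $\lambda$-ball, whose constant is $\ll1$ by the smallness of $\lambda$, gives $\lt T^{-t}(u-\ell)(t)\rt\le\tfrac12\lt T^{-t}(u-\ell)(t)\rt$, so $\ell=u=:f$ is the claimed solution, and the same estimate gives uniqueness among non-negative mild solutions. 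I expect the main obstacle to be the pointwise super-solution estimate above: unlike the fixed-point argument of Section~\ref{sec:GWP}, the KS comparison principle genuinely needs \emph{pointwise} Gaussian-weighted control of the time-integrated gain (not merely of its norm), so this is where the full strength of Lemma~\ref{lem:I}, the six-wave geometry, and the explicit numerical constant in the hypothesis on $R_{ks}$ are used up; a secondary technical point is establishing well-posedness and the comparison principle for the linear transport--absorption iterates in the exponentially weighted space.
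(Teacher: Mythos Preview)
Your approach is essentially the paper's: the same gain/loss splitting $\C_6[f]=\mathcal G[f]-f\,\nu[f]$ (the paper's $G$ and $R$), the same choice $\ell_0=0$ and $T^{-t}u_0=\lambda M$, the same beginning-condition check via the pointwise Gaussian-weighted bound on $\int_0^t T^{-s}\mathcal G[u_0](s)\,ds$ coming from Proposition~\ref{prop:gl_bounds} (with $\kappa=12$, yielding exactly $240\,C_{1,\beta}\alpha^{-1/2}\lambda^4\le 1$), and the same monotone-induction/convergence/uniqueness structure. Your choice $\lambda=\tfrac{20}{19}R_{ks}$ instead of the paper's $\tfrac{20}{19}\|f_0\|_{\alpha,\beta}$ is harmless since $\|f_0\|_{\alpha,\beta}\le R_{ks}$.

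There is one genuine slip in the order of the argument. After passing to the limit in the iteration identities you do \emph{not} obtain that $\ell$ and $u$ each satisfy $T^{-t}w(t)=f_0+\int_0^tT^{-s}\C_6[w](s)\,ds$. What you get is the \emph{coupled} pair
\[
T^{-t}\ell(t)=f_0+\int_0^tT^{-s}\big(\mathcal G[\ell]-\ell\,\nu[u]\big)(s)\,ds,\qquad
T^{-t}u(t)=f_0+\int_0^tT^{-s}\big(\mathcal G[u]-u\,\nu[\ell]\big)(s)\,ds,
\]
and $\mathcal G[\ell]-\ell\,\nu[u]\neq\C_6[\ell]$ until you know $\ell=u$. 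The paper (Theorem~\ref{thm:KS_WP}(ii)--(iii)) keeps the coupled system, then subtracts these two equations and uses the multilinear estimates of Proposition~\ref{prop:gl_bounds} together with $\|T^{-(\cdot)}u_0\|_{\alpha,\beta}<\alpha^{1/8}/(240C_{1,\beta})^{1/4}$ to obtain $\lt T^{-(\cdot)}(u-\ell)\rt<\tfrac12\lt T^{-(\cdot)}(u-\ell)\rt$, hence $\ell=u=:f$, and only \emph{then} concludes that $f$ solves the full equation. Your Lipschitz argument for $\ell=u$ will still work, but it must be applied to the coupled identities, not to two copies of the full equation; rearranging your last paragraph accordingly fixes the proof.
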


We are also interested in describing the long-time behavior of solutions of the 6-wave kinetic equation \eqref{eq:6wave} by comparing them with those of the linear problem. To do this, we start by defining the concept of a scattering state.
\begin{definition}
    Given a solution $f$ of the 6-wave kinetic equation \eqref{eq:6wave},  we say that $f_{\pm} \in \m$ is a scattering state if 
     \[\lim\limits_{t\to\pm\infty}\ld T^{-t}f(t) - f_\pm\rd = 0.\]
\end{definition}
Before we state the theorems, we define the following sets for fixed $R > 0$, 
\begin{align}
    B(R) &=\{g \in \mathcal{M}_{\alpha,\beta}: \| g\|_{\alpha,\beta} < R\},\label{B(R)}\\
    \overline{B}(R) &= \{g \in \overline{\mathcal{M}}_{\alpha,\beta}: \lt g\rt < R\}.\label{oB(R)}
\end{align}
The next theorem shows that every function in a small enough ball is a scattering state of a solution of \eqref{eq:6wave}.
\begin{theorem}\label{thm: scattering 1}
    Let $d=1, \alpha, \beta >0$, and $R_s \leq \frac{\alpha^{1/8}}{2^{\frac{7}{2}}C_{1,\beta}^{1/4}}$,  where $C_{1,\beta} > 0$ is given by \eqref{conv_constant}. 
    Then for every $f_\pm \in B(R_s)$, there exists a unique $f_0 \in B(2R_s)$ such that the mild solution $f$ of \eqref{eq:6wave} corresponding to the initial data $f_0$ satisfies \begin{align}\label{scatter1_lim}
    \lim\limits_{t\rightarrow \pm\infty}\ld T^{-t} f(t) - f_\pm\rd = 0.
    \end{align}
\end{theorem}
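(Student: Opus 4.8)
The plan is to construct the solution directly by a fixed point argument on the integrated-in-time Duhamel formulation adapted to the ``final state'' problem, rather than the initial-value one. Concretely, given a prescribed scattering state $f_+ \in B(R_s)$ (the case $t \to -\infty$ being symmetric), I look for a function $f$ with $T^{-(\cdot)}f \in \M$ satisfying
\begin{align}\label{eq:scatter-duhamel}
    T^{-t}f(t) = f_+ - \int_t^{+\infty} T^{-s}\C[f](s)\, ds, \qquad t \in \R.
\end{align}
Comparing \eqref{eq:scatter-duhamel} with the mild solution identity in Definition \ref{def:solution}, any solution of \eqref{eq:scatter-duhamel} is automatically a mild solution with initial data $f_0 := f_+ - \int_0^{+\infty} T^{-s}\C[f](s)\, ds$, and conversely. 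So the theorem reduces to: (i) solving \eqref{eq:scatter-duhamel} by contraction in the ball $\lt T^{-(\cdot)}f \rt \le 2R_s$, (ii) reading off $f_0$ and checking $\|f_0\|_{\alpha,\beta} < 2R_s$, and (iii) verifying the limit \eqref{scatter1_lim} and uniqueness of $f_0$.

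First I would set up the contraction. Define $\Phi(f)(t) := T^{t}f_+ - \int_t^{+\infty} T^{t-s}\C[f](s)\, ds$, equivalently $T^{-t}\Phi(f)(t) = f_+ - \int_t^{+\infty} T^{-s}\C[f](s)\, ds$. The key input is the a priori estimate on the collision operator already established in Section \ref{sec:prep} (the bound coming from Lemma \ref{lem:I} and the convolution constant $C_{1,\beta}$ in \eqref{conv_constant}): schematically, $\ld \C[f](s)\rd \lesssim C_{1,\beta}\,\alpha^{-1/2}\,\|T^{-s}f(s)\|_{\alpha,\beta}^{?}$ with the sextic homogeneity of $\C_6$ — more precisely the same bound that makes the threshold $R_s \le \alpha^{1/8}/(2^{7/2}C_{1,\beta}^{1/4})$ appear in Theorem \ref{thm:wke well posed}. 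The crucial structural point, which I expect is exactly why this weighted $L^\infty$ space was chosen, is that the transport group $T^s$ is an \emph{isometry} on $\m$ only after the $T^{-(\cdot)}$ conjugation is accounted for, and — most importantly — that the bound on $\ld T^{-s}\C[f](s)\rd$ must be \emph{integrable in $s$ over $(t,\infty)$}. Here one needs the decay/time-uniformity encoded in the representation lemma: the delta functions $\delta(\Sigma)\delta(\Omega)$ after the ternary-collision-type parametrization produce, upon applying $T^{-s}$ and using the dispersive relation $\Omega$, an $s$-dependent Gaussian localization whose $L^\infty$-weighted norm is integrable in $s$. Granting an estimate of the form $\int_t^\infty \ld T^{-s}\C[f](s)\rd\, ds \le K\, C_{1,\beta}\alpha^{-1/2}\,\lt T^{-(\cdot)}f\rt^{6}$ uniformly in $t$ (this is where the constant $2^{7/2}$ and the exponent $1/4$ in the hypothesis on $R_s$ get calibrated), the map $\Phi$ sends the ball of radius $2R_s$ into itself: $\lt T^{-(\cdot)}\Phi(f)\rt \le R_s + K C_{1,\beta}\alpha^{-1/2}(2R_s)^6 \le 2R_s$ precisely when $R_s \le \alpha^{1/8}/(2^{7/2}C_{1,\beta}^{1/4})$ (up to fixing $K$). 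The contraction property follows from the multilinearity of $\C_6$ via the same estimate applied to the difference (telescoping the sextic product into six terms), giving a Lipschitz constant $\lesssim C_{1,\beta}\alpha^{-1/2}(2R_s)^5 < 1$ in the same regime, possibly after shrinking the numerical constant; this yields a unique fixed point $f$ with $\lt T^{-(\cdot)}f\rt \le 2R_s$.

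Next, with the fixed point $f$ in hand, I set $f_0 := f_+ - \int_0^{+\infty} T^{-s}\C[f](s)\, ds$; by the integrability estimate, $\|f_0 - f_+\|_{\alpha,\beta} \le K C_{1,\beta}\alpha^{-1/2}(2R_s)^6 \le R_s$, hence $\|f_0\|_{\alpha,\beta} < 2R_s$ and $f_0 \in B(2R_s)$. Subtracting the identity at time $t$ from the one at time $0$ shows $f$ is the mild solution with data $f_0$ (and by Theorem \ref{thm:wke well posed} it is \emph{the} one satisfying the $2R_s$ bound, since $R_s \le R_e$). For the scattering limit, $\ld T^{-t}f(t) - f_+\rd = \ld \int_t^{\infty} T^{-s}\C[f](s)\, ds \rd \le \int_t^\infty \ld T^{-s}\C[f](s)\rd\, ds \to 0$ as $t \to +\infty$ by dominated convergence (the integrand is integrable on $(0,\infty)$), which is \eqref{scatter1_lim}. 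Uniqueness of $f_0$: if $f_0, \tilde f_0 \in B(2R_s)$ both produce solutions (with the $2R_s$ bound) scattering to the same $f_+$, then both solutions solve \eqref{eq:scatter-duhamel}, so they coincide by the contraction's uniqueness, whence $f_0 = \tilde f_0$ by evaluating at $t=0$; alternatively, invoke the stability estimate \eqref{stability} together with $\ld T^{-t}f(t) - T^{-t}g(t)\rd \to 0$. The case $t\to -\infty$ is handled identically with $\int_{-\infty}^{t}$ in place of $\int_t^{\infty}$.

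The main obstacle, I expect, is establishing the \emph{time-integrable} bound $\int_t^\infty \ld T^{-s}\C[f](s)\rd\, ds \le K C_{1,\beta}\alpha^{-1/2}\lt T^{-(\cdot)}f\rt^6$ uniformly in $t$ — i.e., genuine integrable-in-time decay of the collision term along the transport flow. This is strictly stronger than the pointwise-in-time collision bound used for Theorem \ref{thm:wke well posed}, and it is here that the geometry of the six-wave resonant manifold \eqref{res_manifolds} bites: after the ternary-Boltzmann-type parametrization of $\{\Sigma = 0, \Omega = 0\}$ and the change of variables induced by $T^{-s}$, one must extract an $s$-decay factor (morally $\sim |s|^{-1}$ or a Gaussian in $s$, from the quadratic phase $\Omega$ in the shifted variables) and check it survives integration against the exponential weights with room to spare for the numerical constants in the hypothesis on $R_s$. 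Making the dependence on $\alpha,\beta$ explicit and matching the exact threshold $\alpha^{1/8}/(2^{7/2}C_{1,\beta}^{1/4})$ is the delicate bookkeeping; everything else is a standard Banach fixed point plus dominated convergence argument.
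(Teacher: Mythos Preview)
Your approach is essentially the paper's: a contraction on $\overline{B}(2R_s)$ for the map $g \mapsto f_+ - \Lambda_{t,\infty}[g]$ (the paper's $\mathcal{A}$, see \eqref{wke A}), then set $f_0 = g(0)$, verify that $f = T^{(\cdot)}g$ is a mild solution, and deduce the limit from the tail estimate \eqref{int_convergent}. Two corrections are in order, though neither derails the argument.

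First, the nonlinearity $\C_6$ is \emph{quintic} in $f$, not sextic: each of the six terms in \eqref{collision_op} is a product of five factors, so the correct bounds are $\lt \Lambda_{t,\infty}[g]\rt \lesssim C_{1,\beta}\alpha^{-1/2}\lt g\rt^{5}$ and the Lipschitz constant scales like $\rho^{4}$ --- this is precisely what produces the threshold $R_s \le \alpha^{1/8}/(2^{7/2}C_{1,\beta}^{1/4})$ (see Lemma~\ref{lem:lambda} with $\rho = 2R_s$). With the sixth power your constants would not close.

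Second, the ``main obstacle'' you flag --- time-integrability of $s \mapsto \ld T^{-s}\C[f](s)\rd$ --- is not an additional difficulty beyond Theorem~\ref{thm:wke well posed}. The well-posedness proof itself already rests on the \emph{time-integrated} estimates of Proposition~\ref{prop:gl_bounds}, which bound $\ld \int_t^a T^{-s}(\cdot)\,ds\rd$ uniformly in $t,a \in \overline{\R}$ via Lemma~\ref{lem:time}; there is no useful pointwise-in-$s$ bound in play. The scattering proof simply invokes the same Proposition~\ref{prop:gl_bounds} (packaged in Lemma~\ref{lem:lambda}) with $a = +\infty$ in place of $a = t$, and the vanishing of the tail is exactly \eqref{int_convergent}. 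So the estimate you worry about is already on the shelf.
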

Conversely, in the following theorem, we show that for small enough initial data there exists a corresponding scattering state.
\begin{theorem}\label{thm:scattering 2}
     Let $d=1, \alpha,\beta > 0$, and $R_s \leq \frac{\alpha^{1/8}}{2^{\frac{7}{2}}C_{1,\beta}^{1/4}}$,  where $C_{1,\beta} > 0$ is given by \eqref{conv_constant}.  Let $f_0 \in B(R_s)$ and let $f$ be the mild solution to wave kinetic equation \eqref{eq:6wave} corresponding to $f_0$. Then there exists a unique $f_\pm \in B(2R_s)$ such that \[\lim\limits_{t\to\pm\infty}\ld T^{-t}f(t) - f_\pm\rd = 0.\]
\end{theorem}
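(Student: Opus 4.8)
The plan is to run essentially the same contraction/fixed-point machinery used in Theorem \ref{thm:wke well posed}, but now on the half-line $[0,\infty)$ (and symmetrically $(-\infty,0]$), and to extract the scattering state as the limit of $T^{-t}f(t)$ by showing this curve is Cauchy in $\m$ as $t\to+\infty$. First I would recall from Definition \ref{def:solution} that the mild solution satisfies $T^{-t}f(t) = f_0 + \int_0^t T^{-s}\C[f](s)\,ds$; since $f_0\in B(R_s)$ and $R_s \leq \frac{\alpha^{1/8}}{2^{7/2}C_{1,\beta}^{1/4}}$, Theorem \ref{thm:wke well posed}(a) already gives a unique mild solution $f$ with $\lt T^{-t}f(t)\rt \leq 2R_s$. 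So the existence and uniqueness of $f$ is free; the new content is the convergence as $t\to\pm\infty$ and the membership $f_\pm\in B(2R_s)$.

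For the convergence, define the candidate scattering state $f_+ := f_0 + \int_0^\infty T^{-s}\C[f](s)\,ds$, provided the improper integral converges in $\m$. The key estimate needed is the a priori bound on the collision operator established in Section \ref{sec:prep} (the estimates accompanying Lemma \ref{lem:I}), which should give something of the form $\ld \C[f](s)\rd \lesssim C_{1,\beta}\,\ld f(s)\rd^{\,?}$ with a decaying-in-$s$ factor coming from the dispersive/transport smoothing — more precisely, because the resonant-manifold integral against the transported six-fold product picks up a factor that is integrable in $s$ (this is exactly the mechanism that makes the Duhamel iteration a contraction on all of $\R$ in Theorem \ref{thm:wke well posed}), one gets $\int_0^\infty \ld T^{-s}\C[f](s)\rd\,ds < \infty$. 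Granting that, $T^{-t}f(t) - f_+ = -\int_t^\infty T^{-s}\C[f](s)\,ds \to 0$ in $\m$ as $t\to+\infty$, which is \eqref{scatter1_lim}. The bound $\ld f_+\rd < 2R_s$ follows by feeding $\lt T^{-t}f(t)\rt \leq 2R_s$ into the tail integral estimate and the smallness of $R_s$: $\ld f_+\rd \leq \ld f_0\rd + \int_0^\infty \ld T^{-s}\C[f](s)\rd\,ds \leq R_s + (\text{small}) < 2R_s$. The case $t\to-\infty$ is identical with $\int_0^\infty$ replaced by $\int_{-\infty}^0$.

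Uniqueness of $f_\pm$ is immediate: if $g_\pm$ also satisfied $\lim_{t\to\pm\infty}\ld T^{-t}f(t)-g_\pm\rd = 0$, then $\ld f_\pm - g_\pm\rd \leq \ld T^{-t}f(t)-f_\pm\rd + \ld T^{-t}f(t)-g_\pm\rd \to 0$, so $f_\pm = g_\pm$.

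I expect the main obstacle to be verifying the \emph{integrability in $s$} of $\ld T^{-s}\C[f](s)\rd$, i.e.\ confirming that the a priori collision estimate from Section \ref{sec:prep} really does produce a factor that is summable on the half-line rather than merely bounded; this is where the geometry of the six-wave resonant manifold \eqref{res_manifolds} and the chosen parametrization (the one ``inspired by the ternary Boltzmann equation'') enter, and it is the step that cannot simply be quoted from Theorem \ref{thm:wke well posed} without rereading how the constant $C_{1,\beta}$ in \eqref{conv_constant} is generated. If instead the Section \ref{sec:prep} bound is time-\emph{uniform} and the contraction in Theorem \ref{thm:wke well posed} was closed on a finite interval and iterated, then one must argue the Cauchy property differently — e.g.\ by a bootstrap using \eqref{stability}-type continuity together with the smallness $\lt T^{-t}f(t)\rt \leq 2R_s$ to show the Duhamel tail is a geometrically small perturbation — but the structure of the argument (candidate state = full Duhamel integral, convergence = smallness of the tail) is unchanged.
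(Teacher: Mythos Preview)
Your proposal is correct and follows essentially the same route as the paper: the paper also uses the Duhamel representation, invokes the integrability $\int_\R \ld T^{-s}\C[f](s)\rd\,ds < \infty$ (recorded as \eqref{int_convergent}, a consequence of the proof of Proposition \ref{prop:gl_bounds}) to show $T^{-t}f(t)$ is Cauchy in $\m$ as $t\to\pm\infty$, identifies $f_\pm = f_0 + \int_0^{\pm\infty} T^{-s}\C[f](s)\,ds$, and then uses Lemma \ref{lem:lambda} to place $f_\pm$ in $B(2R_s)$. The ``main obstacle'' you flag is exactly what \eqref{int_convergent} supplies, so your first scenario (genuine integrability in $s$, not a finite-interval iteration) is the correct one and your hedged alternative is unnecessary.
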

Our next goal is to connect scattering states $f_-$ and $f_+$ when possible. 
\begin{theorem}\label{thm: main scattering}
    Let $d = 1,\alpha,\beta> 0$ and $0 < R_s \le \frac{\alpha^{1/8}}{2^{\frac{7}{2}}C_{1,\beta}^{1/4}}$. Then:
    \begin{itemize}
    
    \item[(i)] For each $f_{-} \in B(\frac{R_s}{2})$, there exists a unique $f_{+} \in B(2R_s) $ and a unique mild solution $f$ of \eqref{eq:6wave} such that \begin{align*}
        \lim\limits_{t \to  \infty}\ld T^{-t}f(t) - f_+\rd = 0  \quad \text{and} \quad \lim\limits_{t \to -\infty}\ld T^{-t}f(t) - f_{-}\rd = 0.
    \end{align*}

    \item[(ii)] For each $g_{+} \in B(\frac{R_s}{2})$, there exists a unique $g_{-} \in B(2R_s) $ and a unique mild solution $g$ of \eqref{eq:6wave} such that \begin{align*}
        \lim\limits_{t \to  \infty}\ld T^{-t}g(t) - g_+\rd = 0  \quad \text{and} \quad \lim\limits_{t \to -\infty}\ld T^{-t}g(t) - g_{-}\rd = 0.
    \end{align*}
\end{itemize}
\end{theorem}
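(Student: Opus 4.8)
The plan is to obtain the ``scattering map'' $f_{-}\mapsto f_{+}$ of part~(i) (and $g_{+}\mapsto g_{-}$ of part~(ii)) by composing the two wave operators already constructed in Theorems~\ref{thm: scattering 1} and \ref{thm:scattering 2}, choosing the radius parameters so that the output ball of the first matches the input ball of the second. The key observation is that, since $\tfrac{R_s}{2}\le \tfrac12\cdot\tfrac{\alpha^{1/8}}{2^{7/2}C_{1,\beta}^{1/4}}<\tfrac{\alpha^{1/8}}{2^{7/2}C_{1,\beta}^{1/4}}$, Theorem~\ref{thm: scattering 1} may be invoked with the radius parameter $\tfrac{R_s}{2}$ in place of $R_s$, and then its output datum automatically lies in $B\!\left(2\cdot\tfrac{R_s}{2}\right)=B(R_s)$ — exactly the hypothesis needed in order to feed it into Theorem~\ref{thm:scattering 2}. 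This rescaled application is the only place where the restriction $f_{-}\in B(\tfrac{R_s}{2})$ (rather than $B(R_s)$) is used.

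\emph{Part (i).} Given $f_{-}\in B(\tfrac{R_s}{2})$, I would first apply Theorem~\ref{thm: scattering 1} in the $t\to-\infty$ case, with radius parameter $\tfrac{R_s}{2}$, to produce a unique $f_0\in B(R_s)$ and the mild solution $f$ of \eqref{eq:6wave} with datum $f_0$ satisfying $\lim_{t\to-\infty}\ld T^{-t}f(t)-f_{-}\rd=0$; since $\ld f_0\rd<R_s\le \alpha^{1/8}/(2^{7/2}C_{1,\beta}^{1/4})$, this $f$ is precisely the global solution of Theorem~\ref{thm:wke well posed}. I would then apply Theorem~\ref{thm:scattering 2} with radius parameter $R_s$ to this $f_0\in B(R_s)$ and its (unchanged) mild solution $f$, obtaining a unique $f_{+}\in B(2R_s)$ with $\lim_{t\to+\infty}\ld T^{-t}f(t)-f_{+}\rd=0$. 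The single solution $f$ then carries both prescribed asymptotics — its past state is still $f_{-}$, the future state being $f_{+}$ — which settles existence.

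\emph{Uniqueness and Part (ii).} For uniqueness of the pair $(f_{+},f)$ I would chain the three uniqueness assertions: $f_0$ is the unique element of $B(R_s)$ whose solution has past state $f_{-}$ (Theorem~\ref{thm: scattering 1} at radius $\tfrac{R_s}{2}$), $f$ is the unique mild solution with datum $f_0$ (Theorem~\ref{thm:wke well posed}), and $f_{+}$ is the unique future state of $f$ (Theorem~\ref{thm:scattering 2}). The one remark needed here is that any mild solution realizing the past state $f_{-}\in B(\tfrac{R_s}{2})$ is automatically a small-data solution: for $t$ sufficiently negative $\ld T^{-t}f(t)\rd<\alpha^{1/8}/(2^{7/2}C_{1,\beta}^{1/4})$, and autonomy of \eqref{eq:6wave} together with the global bound \eqref{sol bound} and uniqueness in Theorem~\ref{thm:wke well posed} propagate this forward — so it is covered by the uniqueness statements above. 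Part~(ii) would be the time-reversed version of the same argument: given $g_{+}\in B(\tfrac{R_s}{2})$, apply Theorem~\ref{thm: scattering 1} in the $t\to+\infty$ case at radius $\tfrac{R_s}{2}$ to get $g_0\in B(R_s)$ and $g$ with $\lim_{t\to+\infty}\ld T^{-t}g(t)-g_{+}\rd=0$, then Theorem~\ref{thm:scattering 2} in the $t\to-\infty$ case at radius $R_s$ to get the unique $g_{-}\in B(2R_s)$.

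The main (and essentially only) obstacle is the radius bookkeeping in the first paragraph: one must ensure the intermediate datum lands in $B(R_s)$ and not merely in $B(2R_s)$, which is why the asymptotic state is taken in the half-size ball and why the smallness already built into the constraint $R_s\le \alpha^{1/8}/(2^{7/2}C_{1,\beta}^{1/4})$ is exploited through the rescaled invocation of Theorem~\ref{thm: scattering 1}. Beyond that, the argument introduces no new analysis — it is just a composition of the results of Sections~\ref{sec:GWP} and \ref{sec:scattering}.
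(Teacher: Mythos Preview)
Your proposal is correct and follows essentially the same approach as the paper: invoke Theorem~\ref{thm: scattering 1} at radius $\tfrac{R_s}{2}$ to obtain $f_0\in B(R_s)$ with the prescribed past state, then feed $f_0$ into Theorem~\ref{thm:scattering 2} at radius $R_s$ to obtain $f_+\in B(2R_s)$. In fact you are more careful than the paper, whose proof is just two sentences and leaves the radius bookkeeping and the uniqueness chaining implicit.
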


\begin{remark}
    The part (i) of the above theorem implies the existence of an operator $\mathcal{S}$, defined on a subset of $\m$, that maps $f_-$ to $f_+$ i.e. $\mathcal{S}f_- = f_+.$ Such an operator is typically called the scattering operator.  The part (ii) of the above theorem implies that the operator $\mathcal{S}$ is onto on an appropriate set.
\end{remark}

\section{Preparations}\label{sec:prep}
The goal of this section is to present the tools  used in the proofs of the main theorems stated above. To do this, we begin with a crucial lemma in Subsection \ref{subsec:rep_lemma} that provides a alternative representation of the collision kernel. Then, we decompose the collision operator into  components in Subsection \ref{subsec:gain_loss} and establish several apriori bounds for these components in Subsection \ref{subsec:a priori}. Lastly, in Subsection \ref{subsec:prep} we define a key mapping that will be used to streamline the proofs of Theorems \ref{thm:wke well posed} and \ref{thm:nonnegativity} in Section \ref{sec:GWP}
 and Theorems \ref{thm: scattering 1} and \ref{thm:scattering 2} in Section  \ref{sec:scattering}.

\subsection{Representation Lemma}\label{subsec:rep_lemma}
 Inspired by the representation lemma in \cite{am24} and the parametrization used in the derivation of the ternary Boltzmann equation \cite{ampa21, am20}, we prove the following representation lemma relevant for the 6-wave kinetic equation. 

\begin{lemma}\label{lem:I}
Let $v,v_1,v_2 \in \R^d$, and denote 
    \[I(v,v_1,v_2) =\int_{\R^{3d}}\delta(\Sigma)\delta(\Omega)dv_3dv_4dv_5,\]
where 
$\Sigma = v + v_1 + v_2 - v_3 - v_4 - v_5$ and $\Omega = |v|^2 + |v_1|^2 + |v_2|^2 - |v_3|^2 - |v_4|^2 - |v_5|^2$.
Then 
\begin{align}\label{rep}
        I(v,v_1,v_2) = \int_{\mathbb{S}^{2d-1}} 
        \hspace{-6pt}
        \frac{\big(\omega_1\cdot(v_1 - v) + \omega_2\cdot(v_2 - v)\big)^{2d-2}}{(1 + \omega_1\cdot\omega_2)^{2d-1}}
\cdot\frac{\sign\big(\omega_1\cdot(v_1 - v) + \omega_2\cdot(v_2 - v)\big)+1}{4}
         d\boldsymbol{\omega},
    \end{align}
where $\boldsymbol{\omega} = (\omega_1,\omega_2) \in \mathbb{S}^{2d-1}$.
As a consequence, we have
\begin{align}\label{rep-ineq}
        I(v,v_1,v_2) \le \frac{1}{2}\int_{\mathbb{S}^{2d-1}} 
        \hspace{-6pt}
        \frac{\big|\omega_1\cdot(v_1 - v) + \omega_2\cdot(v_2 - v)\big|^{2d-2}}{(1 + \omega_1\cdot\omega_2)^{2d-1}}~ d\boldsymbol{\omega}.
\end{align}
\end{lemma}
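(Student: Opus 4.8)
The plan is to compute the integral $I(v,v_1,v_2)=\int_{\R^{3d}}\delta(\Sigma)\delta(\Omega)\,dv_3dv_4dv_5$ by first using the momentum $\delta$-function to eliminate $v_5$, and then parametrizing the remaining $2d$-dimensional integral in a way that diagonalizes the energy constraint $\Omega$, in the spirit of the ternary-Boltzmann parametrization of \cite{ampa21, am20}. Concretely, I would set $v_5 = v+v_1+v_2-v_3-v_4$, so that
\begin{align*}
I(v,v_1,v_2) = \int_{\R^{2d}}\delta\big(\Omega(v_3,v_4)\big)\,dv_3\,dv_4,
\end{align*}
where, after substitution, $\Omega$ becomes a quadratic form in $(v_3,v_4)\in\R^{2d}$. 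Completing the square in $(v_3,v_4)$ jointly (the cross term $v_3\cdot v_4$ coming from $|v_5|^2$ means one really has to diagonalize a $2\times 2$ block tensored with the identity on $\R^d$), one sees that $\{\Omega=0\}$ is a sphere of dimension $2d-1$ in the shifted, rescaled variables. The natural move is then to pass to "polar coordinates" on $\R^{2d}$ adapted to this quadratic form: write the centered variable as $r\boldsymbol\omega$ with $\boldsymbol\omega=(\omega_1,\omega_2)\in\mathbb S^{2d-1}$ and $r>0$, so that $dv_3\,dv_4 = r^{2d-1}\,dr\,d\boldsymbol\omega$ (up to the Jacobian of the linear change of variables, which contributes the $(1+\omega_1\cdot\omega_2)$ denominator). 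The energy $\delta$-function then fixes $r$ in terms of $\boldsymbol\omega$ and the data, and evaluating $\delta$ at that root produces a factor $r^{2d-2}$ divided by the derivative of the quadratic expression in $r$.

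Here are the steps in order. (1) Eliminate $v_5$ via $\delta(\Sigma)$. (2) Expand $\Omega$ as a function of $(v_3,v_4)$; identify it as $Q(v_3,v_4) + (\text{linear}) + (\text{const})$ where $Q$ is a fixed quadratic form with matrix $A\otimes \mathrm{Id}_d$, $A=\begin{pmatrix}2&1\\1&2\end{pmatrix}$ (or its negative). (3) Translate to center the quadratic part; record that the constant left over is (a multiple of) $\big(\omega_1\cdot(v_1-v)+\omega_2\cdot(v_2-v)\big)^2$-type expression after the next change of variables — this is the term that ends up in the numerator. (4) Make the linear change of variables that brings $Q$ to the standard Euclidean form $|w|^2$ on $\R^{2d}$; the determinant of this change is where $(1+\omega_1\cdot\omega_2)^{2d-1}$ originates (note $\det A = 3$, but after the $d$-fold tensoring and the polar split the relevant weight is the directional one $(1+\omega_1\cdot\omega_2)$, reflecting the non-orthonormal frame $\{\omega_1,\omega_2\}$ spanning the $\omega$-plane). (5) Switch to polar coordinates $w=r\boldsymbol\omega$, use $\int_0^\infty \delta(ar^2 - br + c)\,r^{2d-1}\,dr$ to solve for the positive root $r_*$, picking up $r_*^{2d-2}/|{\rm derivative}|$; the requirement that a positive root exist is exactly what produces the indicator $\tfrac{1}{4}\big(\sign(\,\cdot\,)+1\big)$ (it is $\tfrac14\cdot\big(\text{sign}+1\big)$ rather than $\tfrac12\cdot\big(\text{sign}+1\big)$ because of an additional factor $\tfrac12$ from the Jacobian of $r^2$). (6) Collect all factors to arrive at \eqref{rep}. (7) Finally, \eqref{rep-ineq} is immediate: bound the indicator factor $\tfrac14(\sign(\cdot)+1)$ by $\tfrac12$ and replace the base of the even power $2d-2$ by its absolute value (an equality when $d\ge 1$, but stated as an inequality for uniformity).

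I expect the main obstacle to be the bookkeeping in steps (3)–(5): getting the \emph{exact} constants right — why the numerator is precisely $\big(\omega_1\cdot(v_1-v)+\omega_2\cdot(v_2-v)\big)^{2d-2}$, why the denominator carries the power $2d-1$ rather than $2d-2$ or $2d$, and why the sign/indicator prefactor is $\tfrac14(\sign+1)$ and not $\tfrac12(\sign+1)$ or a bare indicator. The cleanest way to control this is to avoid diagonalizing $A$ explicitly and instead directly parametrize the affine plane $\{v_3+v_4+v_5 = v+v_1+v_2\}$ (already imposed) intersected with $\{\Omega=0\}$: write $v_3 = \bar v + a$, $v_4=\bar v + b$, $v_5 = \bar v + c$ with $a+b+c=0$ and $\bar v = \tfrac13(v+v_1+v_2)$, so that $\Omega = |v|^2+|v_1|^2+|v_2|^2 - 3|\bar v|^2 - (|a|^2+|b|^2+|c|^2)$ and $|a|^2+|b|^2+|c|^2 = 2(|a|^2+|b|^2+a\cdot b)$ on the constraint; this is manifestly the quadratic form with matrix $2A$ on the $(a,b)$-plane, and its level sets are ellipsoids whose polar parametrization yields directly the stated frame $(\omega_1,\omega_2)$ and the weight $(1+\omega_1\cdot\omega_2)$. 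Once the geometry is set up this way, steps (5)–(7) are a routine one-dimensional $\delta$-calculus computation, and the inequality \eqref{rep-ineq} follows by inspection.
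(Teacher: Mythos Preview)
Your proposal contains a genuine gap in the choice of center for the polar parametrization. In step~(3) you ``translate to center the quadratic part'' (and in your alternative you center at the barycenter $\bar v=\tfrac13(v+v_1+v_2)$), which kills the linear term. Carrying this through, the energy constraint becomes $\Omega = R^2 - 2r^2(1+\omega_1\cdot\omega_2)$ with $R^2 = |v|^2+|v_1|^2+|v_2|^2-3|\bar v|^2$ a constant, and the radial $\delta$-integral yields
\[
I(v,v_1,v_2)=\frac{R^{2d-2}}{2^{d+1}}\int_{\mathbb S^{2d-1}}\frac{d\boldsymbol\omega}{(1+\omega_1\cdot\omega_2)^{d}}.
\]
This is a valid representation, but it is \emph{not} the formula \eqref{rep}: the numerator is the constant $R^{2d-2}$ rather than the $\boldsymbol\omega$-dependent linear expression $\big(\omega_1\cdot(v_1-v)+\omega_2\cdot(v_2-v)\big)^{2d-2}$, the denominator carries the wrong power, and no sign indicator appears (your own step~(5) writes $\delta(ar^2-br+c)$ with a linear term, which is inconsistent with having centered in step~(3)).

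The paper's key idea is different: it takes polar coordinates centered not at the center of the ellipsoid but at the point $(v_1,v_2)$, which lies \emph{on} the resonant surface (since $(v_3,v_4,v_5)=(v_1,v_2,v)$ trivially satisfies $\Sigma=\Omega=0$). Concretely one sets $(v_3,v_4)=(v_1,v_2)-c(\omega_1,\omega_2)$. Because the center is on the surface, the constant term of the quadratic in $c$ vanishes and one gets
\[
\Omega = 2c\Big(\omega_1\cdot(v_1-v)+\omega_2\cdot(v_2-v)-c(1+\omega_1\cdot\omega_2)\Big),
\]
so the roots are $c=0$ and $c=A:=\dfrac{\omega_1\cdot(v_1-v)+\omega_2\cdot(v_2-v)}{1+\omega_1\cdot\omega_2}$. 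The $\delta$-calculus $\delta(c^2-Ac)=\big(\delta(c)+\delta(c-A)\big)/|A|$ then immediately produces the linear numerator, the power $2d-1$ in the denominator, and the indicator $\tfrac12(\mathrm{sgn}+1)$ from the requirement $A>0$; the remaining $\tfrac12$ comes from factoring $2(1+\omega_1\cdot\omega_2)$ out of the argument of $\delta$. This is precisely the ternary-Boltzmann parametrization you cite --- but its essential feature is the off-center origin, not the diagonalization you outline.
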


\begin{proof}
Let $v_5 = v + v_1 + v_2 - v_3 - v_4$ as given by the resonant manifold $\Sigma$. Then we can write
    \begin{align*}
        I(v,v_1,v_2) 
        & = \int_{\R^{2d}}\delta(|v|^2 + |v_1|^2 + |v_2|^2 - |v_3|^2 - |v_4|^2 - |v + v_1 + v_2 - v_3 - v_4|^2)\ dv_3dv_4.
    \end{align*}
We make the spherical change of variable $(v_3,v_4) \in \R^d \times \R^d$ to $(c, \boldsymbol{\omega}) \in [0,\infty)\times \mathbb{S}^{2d-1}$ given by 
\begin{align}\label{COV}
    \begin{pmatrix}
            v_3\\
            v_4
        \end{pmatrix} = \begin{pmatrix}v_1\\v_2\end{pmatrix} - c\begin{pmatrix}\omega_1\\\omega_2\end{pmatrix}.
\end{align}
This change of variable is inspired by the constraint used in the definition of the collisional law in the derivation of the ternary Boltzmann equation (see (1.9) in \cite{ampa21}). 
Then we have
\begin{align}
        I&(v,v_1,v_2) \nonumber\\
        & = \int_{\mathbb{S}^{2d-1}}\int_0^\infty c^{2d - 1}\delta\left(|v|^2 + |v_1|^2 + |v_2|^2 - |v_1 - c\omega_1|^2 - |v_2 - c\omega_2|^2 - |v + c(\omega_1 + \omega_2)|^2\right)\ dcd\boldsymbol{\omega}\nonumber\\
        &= \int_{\mathbb{S}^{2d-1}}\int_0^\infty c^{2d - 1}\delta\Big(2c\,\omega_1\cdot v_1 + 2c\, \omega_2\cdot v_2 - 2c\,(\omega_1 + \omega_2)\cdot v - c^2(|\omega_1|^2 + |\omega_2|^2 + |\omega_1 + \omega_2|^2)\Big)dc d\boldsymbol{\omega}\nonumber\\
        & = \int_{\mathbb{S}^{2d-1}}J(\boldsymbol{\omega},v,v_1,v_2)d\boldsymbol{\omega},\label{IJ int}
    \end{align}
where 
\begin{align}
    J(\boldsymbol{\omega},v,v_1,v_2): = \int_0^\infty c^{2d-1}\delta\left(2c\bigg[\omega_1\cdot(v_1 - v) + \omega_2\cdot(v_2 - v) - c(1 + \omega_1 \cdot \omega_2)\bigg]\right)dc.\label{def J}
\end{align}
Using the property  $\delta(\alpha x ) = \frac{1}{|\alpha|} \delta(x)$ and the fact that $1+\omega_1 \cdot \omega_2 \ge 1/2$, we have
\begin{align*}
     J(\boldsymbol{\omega},v,v_1,v_2)
     & = \int_0^\infty \frac{c^{2d-1}}{2(1+\omega_1 \cdot \omega_2)} \, \delta\left( c^2 - c~ \frac{\omega_1\cdot(v_1 - v) + \omega_2\cdot(v_2 - v)}{1+\omega_1 \cdot \omega_2}\right)~dc\\
      & =
    \frac{1}{2(1+\omega_1 \cdot \omega_2)} \int_0^\infty c^{2d-1} ~ \delta(c^2 - Ac) ~ dc,
\end{align*}
where we use notation
\begin{align*}
    A = \frac{\omega_1\cdot(v_1 - v) + \omega_2\cdot(v_2 - v)}{1+\omega_1 \cdot \omega_2}.
\end{align*}
Next we use  that for a differentiable function $f(x)$  with simple roots $\{x_n\}_n$ we have \cite[Sec II.2.5]{gesh64}:
\begin{align*}
    \delta(f(x)) = \sum_n \frac{\delta(x-x_n)}{|f'(x_n)|}.
\end{align*}
Thanks to this property, we have
\begin{align*}
 J(\boldsymbol{\omega},v,v_1,v_2)
    & =
     \frac{1}{2(1+\omega_1 \cdot \omega_2)} \int_0^\infty c^{2d-1}
    \left( \frac{\delta(c) + \delta(c-A)}{|A|}\right) ~ dc\\
    & =
    \begin{cases}
        0, & \mbox{if} ~ A\le 0\\
        \displaystyle \frac{A^{2d-2}}{2(1+\omega_1 \cdot \omega_2)},  & \mbox{if} ~ A> 0
    \end{cases}\\
    & =
    \frac{1}{2} \frac{\Big(\omega_1\cdot(v_1 - v) + \omega_2\cdot(v_2 - v)\Big)^{2d-2}}{\big(1+\omega_1 \cdot \omega_2\big)^{2d-1}} 
 \cdot\frac{\sign\big(\omega_1\cdot(v_1 - v) + \omega_2\cdot(v_2 - v)\big)+1}{2}
\end{align*}
Then, by \eqref{IJ int}, the representation\eqref{rep} follows. Inequality \eqref{rep-ineq} is a simple consequence of the fact that $\sign\big(\omega_1\cdot(v_1 - v) + \omega_2\cdot(v_2 - v)\big)+1 \le 2$.
\end{proof}

\subsection{Decomposition of the collision operator}\label{subsec:gain_loss}
For many calculations it will be useful to split the collision operator $\C$ given in \eqref{collision_op} into the following terms: 
\begin{align}\label{gain-loss split}
    \C = G - L,
\end{align} with
\begin{align}
    &G[f,g,h,k,l,m](t,x,v) = \int_{\R^{5d}}\delta(\Sigma)\delta(\Omega)k_3l_4m_5(fg_1 + fh_2 + g_1h_2)dv_1dv_2dv_3dv_4dv_5, \label{def-g}\\
    &L[f,g,h,k,l,m](t,x,v) = \int_{\R^{5d}}\delta(\Sigma)\delta(\Omega)fg_1h_2(k_3l_4 + k_3m_5 + l_4m_5)dv_1dv_2dv_3dv_4dv_5. \label{def-l}
 \end{align}
We note that sometimes we write 
\begin{align}
L[f,g,h,k,l,m] = f \, R[g,h,k,l,m],
\end{align}
where 
\begin{align} \label{def-R}
     R[g,h,k,l,m] = \int_{\R^{5d}}\delta(\Sigma)\delta(\Omega)g_1h_2(k_3l_4 + k_3m_5 + l_4m_5)dv_1dv_2dv_3dv_4dv_5.
\end{align}

We also note that for $f,g \geq 0$ such that $f \leq g$, we have
\begin{align}\label{GL_monotonic}
    G[f,f,f,f,f,f]\leq G[g,g,g,g,g,g] \quad \text{and}\quad L[f,f,f,f,f,f]\leq L[g,g,g,g,g,g].
\end{align}
That is, the operators $G$ and $L$ applied to non-negative functions are monotone.

We can  further split $G$ and $L$ terms as follows
\begin{align}\label{G and L split}
    G=G_1 +G_2,
    \quad 
    L = L_1 + L_2,
\end{align} 
where   
\begin{align}
G_1[f,g,h,k,l,m](t,x,v) &= \int_{\R^{5d}}\delta(\Sigma)\, \delta(\Omega) \, k_3 \, l_4 \, m_5 \, f\, (g_1 + h_2) \,dv_1dv_2dv_3dv_4dv_5,\\
G_2[g,h,k,l,m](t,x,v) &= \int_{\R^{5d}}\delta(\Sigma)\,\delta(\Omega)\, k_3\, l_4\, m_5\, g_1 \, h_2 \, dv_1dv_2dv_3dv_4dv_5,\\
L_1[f,g,h,k,l,m](t,x,v) & = \int_{\R^{5d}}\delta(\Sigma)\, \delta(\Omega)\,f\, g_1\, h_2\, k_3\, (l_4 + m_5)\, dv_1dv_2dv_3dv_4dv_5,\\
L_2[f,g,h,l,m](t,x,v) & = \int_{\R^{5d}}\delta(\Sigma)\, \delta(\Omega)\, f\, g_1\, h_2\, l_4\, m_5\, dv_1dv_2dv_3dv_4dv_5.
\end{align}
Note that $G_1$ is linear in $k,l,m,f$ and the vector $(g,h)$, while $G_2$ is linear in $g,h,k,l,m$. Similarly, $L_1$ is linear in $f,g,h,k$ and the vector $(l,m)$, while $L_2$ is linear in $f,g,h,l,m$. Therefore, we have the following  decompositions:
\begin{align}
    G_1&[f,g,h,k,l,m] - G_1[\widetilde{f},\widetilde{g},\widetilde{h},\widetilde{k},\widetilde{l},\widetilde{m}]\nonumber\\
    &= G_1[f - \widetilde{f},g,h,k,l,m] + G_1[\widetilde{f},g,h,k - \widetilde{k},l,m] + G_1[\widetilde{f},g,h,\widetilde{k},l - \widetilde{l}, m] \nonumber\\
    &\qquad + G_1[\widetilde{f},g,h,\widetilde{k}, \widetilde{l}, m - \widetilde{m}] + G_1[\widetilde{f},g - \widetilde{g},h - \widetilde{h},\widetilde{k},\widetilde{l},\widetilde{m}],\label{G1 decomp}\\
    G_2&[g,h,k,l,m] - G_2[\widetilde{g},\widetilde{h},\widetilde{k},\widetilde{l},\widetilde{m}]\nonumber\\
    & = G_2[g -\widetilde{g},h,k,l,m] + G_2[\widetilde{g},h - \widetilde{h}, k, l, m] + G_2[\widetilde{g},\widetilde{h},k - \widetilde{k},l,m]\nonumber\\
    &\qquad + G_2[\widetilde{g},\widetilde{h},\widetilde{k}, l - \widetilde{l}, m] + G_2[\widetilde{g},\widetilde{h},\widetilde{k},\widetilde{l}, m - \widetilde{m}],\label{G2 decomp}
\end{align}
\begin{align}
    L_1&[f,g,h,k,l,m] - L_1[\widetilde{f},\widetilde{g},\widetilde{h},\widetilde{k},\widetilde{l},\widetilde{m}]\nonumber\\
    & = L_1[f - \widetilde{f},g,h,k,l,m] + L_1[\widetilde{f},g - \widetilde{g},h,k,l,m] + L_1[\widetilde{f},\widetilde{g},h - \widetilde{h},k,l,m]\nonumber\\ 
    &+ L_1[\widetilde{f},\widetilde{g},\widetilde{h},k - \widetilde{k},l,m] + L_1[\widetilde{f},\widetilde{g},\widetilde{h},\widetilde{k}, l - \widetilde{l}, m - \widetilde{m}],\label{L1 decomp}\\
    L_2&[f,g,h,l,m] - L_2[\widetilde{f},\widetilde{g},\widetilde{h},\widetilde{l},\widetilde{m}]\nonumber\\
    & = L_2[f - \widetilde{f},g,h,l,m] + L_2[\widetilde{f},g - \widetilde{g}, h, l,m] + L_2[\widetilde{},\widetilde{g},h - \widetilde{h}, l,m]\nonumber\\
    & \qquad + L_2[\widetilde{f},\widetilde{g},\widetilde{h},l - \widetilde{l}, m]+ L_2[\widetilde{f},\widetilde{g},\widetilde{h},\widetilde{l},m - \widetilde{m}].\label{L2 decomp}
\end{align}
The proof of these linear decompositions can be found in Appendix \ref{lin_decomp}.

\subsection{A priori estimates}\label{subsec:a priori}
We present several estimates of the transported $G$ and $L$ operators that will be used in the proofs of the main theorems. 

We begin with a  lemma that will help us simplify the subsequent a priori bounds.

\begin{lemma}\label{lem:step1}
Let $\alpha,\beta > 0, d \geq 1$ and suppose that functions $f,g,h,k,l,m \in \mathcal{F}$ are such that $T^{-(\cdot)}f$, \, $T^{-(\cdot)}g$, $T^{-(\cdot)}h, \, T^{-(\cdot)}k, \, T^{-(\cdot)}l, \, T^{-(\cdot)}m \in \M$. Then for any $s \in \R$, if we denote
\begin{align} \label{def:I}
    \Gamma(s,x,v) = e^{-\alpha|x|^2 - \beta|v|^2}  \int_{\R^{2d}}(|v_1 - v| + |v_2 - v|)^{2d-2} e^{-\alpha(|x + s(v - v_1)|^2 + |x + s(v - v_2)|^2) - \beta(|v_1|^2 + |v_2|^2)}dv_1dv_2,
\end{align}
then we have
    \begin{align}
        & \bigg| T^{-s} L_1[f,g,h,k,l,m](s) \bigg| \le 2^{2d-1} \lt T^{-(\cdot)}f\rt\ \lt T^{-(\cdot)}g\rt\ \lt T^{-(\cdot)}h\rt\ \lt T^{-(\cdot)}k\rt \nonumber\\
        & \hspace{4.3cm}\times \bigg(\lt T^{-(\cdot)}l\rt + \lt T^{-(\cdot)}m\rt\bigg) 
          \Gamma(s,x,v), \label{L1_step1}\\
        & \bigg| T^{-s} G_1[f,g,h,k,l,m](s) \bigg| \le 2^{2d-1} \lt T^{-(\cdot)}f\rt\ \lt T^{-(\cdot)}k\rt\ \lt T^{-(\cdot)}l\rt\ \lt T^{-(\cdot)}m\rt \nonumber\\
        & \hspace{4.3cm}\times \bigg(\lt T^{-(\cdot)} g\rt + \lt T^{-(\cdot)}h\rt\bigg) 
          \Gamma(s,x,v), \label{G1_step1}\\
        & \bigg| T^{-s} L_2[f,g,h,l,m](s) \bigg| \le 2^{2d-1} \lt T^{-(\cdot)}f\rt\ \lt T^{-(\cdot)}g\rt\ \lt T^{-(\cdot)}h\rt\ \lt T^{-(\cdot)}l\rt \nonumber\\
        & \hspace{4.3cm}\times \lt T^{-(\cdot)}m\rt  \Gamma(s,x,v), \label{L2_step1}\\
        & \bigg| T^{-s} G_2[g,h,k,l,m](s) \bigg| \le 2^{2d-1} \lt T^{-(\cdot)}g\rt\ \lt T^{-(\cdot)}h\rt\ \lt T^{-(\cdot)}k\rt\ \lt T^{-(\cdot)}l\rt \nonumber\\
        & \hspace{4.3cm}\times \lt T^{-(\cdot)}m\rt  \Gamma(s,x,v). \label{G2_step1}
    \end{align}    
\end{lemma}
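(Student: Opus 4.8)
The plan is to prove the four bounds simultaneously, since they all follow the same pattern: take one of the operators $L_1, G_1, L_2, G_2$, apply the transport $T^{-s}$, use the representation Lemma \ref{lem:I} to eliminate the $v_3, v_4, v_5$ integrations against the two delta functions, bound the resulting integrand by the supremum norms, and collect what remains into $\Gamma(s,x,v)$. I would present the argument in full detail for $L_1$ and then indicate that the others are identical up to relabeling.

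\textbf{Step 1: Unravel the transport.} Starting from the definition \eqref{L1 decomp}-type formula for $L_1[f,g,h,k,l,m](s,x,v)$, I apply $T^{-s}$, which replaces $x$ by $x - sv$ throughout the integrand; crucially the velocity arguments $v, v_1, \dots, v_5$ are untouched by the transport. So $T^{-s}L_1[f,g,h,k,l,m](s,x,v)$ equals an integral over $\R^{5d}$ of $\delta(\Sigma)\delta(\Omega)$ times $f(s,x-sv,v)\,g_1(s,x-sv,v_1)\,h_2(s,x-sv,v_2)\,k_3(s,x-sv,v_3)\,(l_4 + m_5)$ evaluated at $x-sv$. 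Now I want to rewrite each factor so that the spatial argument matches the one appearing in the definition of the transported norm: for a generic $\phi$, $\phi(s, x-sv, v_j) = (T^{-s}\phi)(s, x - sv + s v_j, v_j) = (T^{-s}\phi)(s, x + s(v_j - v), v_j)$. This identity is the whole point of working with $T^{-(\cdot)}$; it converts the common argument $x - sv$ into the shifted arguments $x + s(v_j - v)$ that appear inside $\Gamma$. Applying it to each factor and then bounding $|(T^{-s}\phi)(s, x + s(v_j-v), v_j)| \le \lt T^{-(\cdot)}\phi\rt\, e^{-\alpha|x + s(v_j - v)|^2 - \beta|v_j|^2}$ (with the convention $v_0 = v$, so the $f$ factor contributes $e^{-\alpha|x|^2 - \beta|v|^2}$), I pull all six norms out of the integral.

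\textbf{Step 2: Integrate out $v_3, v_4, v_5$.} After Step 1 the remaining integrand is $\delta(\Sigma)\delta(\Omega)$ times a product of Gaussians in $v, v_1, v_2, v_3, v_4, v_5$ (and a shift in $x$). For $L_1$ the factors depending on $v_3, v_4, v_5$ are $e^{-\alpha|x+s(v_3-v)|^2 - \beta|v_3|^2}$ and $(l_4 + m_5)$'s contribution $e^{-\alpha|x+s(v_4-v)|^2-\beta|v_4|^2} + e^{-\alpha|x+s(v_5-v)|^2-\beta|v_5|^2}$; each of these Gaussian factors is bounded above by $1$. Hence, after bounding those by $1$, the $v_3, v_4, v_5$ integral is at most $2\int_{\R^{3d}}\delta(\Sigma)\delta(\Omega)\,dv_3dv_4dv_5 = 2\,I(v,v_1,v_2)$, where the factor $2$ comes from $l_4 + m_5$ producing two identical bounds. (For $L_2$ and $G_2$ there is only one term, so no factor of $2$ there; for $G_1$ the two-term sum $g_1 + h_2$ is in the $v_1, v_2$ variables rather than $v_3, v_4, v_5$, so the factor $2$ appears but attached to a different pair — I will need to be slightly careful about which variables are "integrated out" versus "kept", but the bookkeeping is symmetric.) Then by \eqref{rep-ineq}, $I(v,v_1,v_2) \le \tfrac12\int_{\S^{2d-1}} \frac{|\omega_1\cdot(v_1-v) + \omega_2\cdot(v_2-v)|^{2d-2}}{(1+\omega_1\cdot\omega_2)^{2d-1}}d\boldsymbol\omega$. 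I bound $|\omega_1\cdot(v_1-v) + \omega_2\cdot(v_2-v)| \le |v_1 - v| + |v_2 - v|$ (using $|\omega_1|, |\omega_2|\le 1$) and $\frac{1}{(1+\omega_1\cdot\omega_2)^{2d-1}} \le 2^{2d-1}$ (since $1 + \omega_1\cdot\omega_2 \ge 1/2$, as used in the proof of Lemma \ref{lem:I}), and the spherical integral $\int_{\S^{2d-1}} d\boldsymbol\omega$ contributes a finite constant which, together with the $\tfrac12$, the $2$ from the two-term sum, and the $2^{2d-1}$, I must verify consolidates to the stated prefactor $2^{2d-1}$. (If the surface-measure normalization does not come out exactly, the clean thing is to absorb $|\S^{2d-1}|$ into the definition so the constant is literally $2^{2d-1}$; I expect the paper's normalization of $d\boldsymbol\omega$ makes this work, or the constant should be read as $2^{2d-1}$ up to the harmless sphere volume.)

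\textbf{Step 3: Identify $\Gamma$.} What is left after Steps 1--2 is exactly the product of the six norms times $e^{-\alpha|x|^2 - \beta|v|^2}\int_{\R^{2d}}(|v_1 - v| + |v_2 - v|)^{2d-2}\,e^{-\alpha(|x+s(v-v_1)|^2 + |x+s(v-v_2)|^2) - \beta(|v_1|^2+|v_2|^2)}\,dv_1dv_2$, which is precisely $\Gamma(s,x,v)$ from \eqref{def:I} (noting $|x + s(v_j - v)| = |x + s(v - v_j)|$... actually $x + s(v_j - v) = x - s(v - v_j)$, and since the Gaussian depends only on the squared norm, $|x + s(v_j-v)|^2 = |x - s(v-v_j)|^2$, matching \eqref{def:I} after the sign inside is irrelevant — I'll state this). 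Collecting the constants gives the claimed inequality for $L_1$. The arguments for $G_1, L_2, G_2$ are the same: in each case exactly three of the six (or five) velocity factors are attached to the variables being integrated against the deltas and get bounded by $1$, exactly two factors survive as the $v_1, v_2$-Gaussians forming $\Gamma$, one factor is the untransported-norm $e^{-\alpha|x|^2-\beta|v|^2}$ head term, and the sum of two terms (present for $L_1, G_1$, absent for $L_2, G_2$) is absorbed together with the sphere/delta constants into the single prefactor $2^{2d-1}$ — which matches since for $L_1, G_1$ the two-term sum contributes a factor $2$ that the $\tfrac12$ in \eqref{rep-ineq} cancels, whereas for $L_2, G_2$ there is no $2$ but one must instead not invoke \eqref{rep-ineq}'s $\tfrac12$... so I should be careful: for $L_2, G_2$ use $I \le \tfrac12 \int \cdots \le 2^{2d-2}\int_{\S^{2d-1}}d\boldsymbol\omega \cdot (\dots)$, and for $L_1, G_1$ the extra $2$ restores $2^{2d-1}$. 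I will do the constant bookkeeping carefully in the write-up.

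\textbf{Main obstacle.} The conceptual content is entirely in Step 1 (the transport identity $\phi(s,x-sv,v_j) = (T^{-s}\phi)(s, x+s(v_j-v), v_j)$) and Step 2 (the use of Lemma \ref{lem:I}); the real work is the constant bookkeeping in Step 2 — making sure the factor of $2$ from the binary sums, the $\tfrac12$ from \eqref{rep-ineq}, the $2^{2d-1}$ from $(1+\omega_1\cdot\omega_2)^{-(2d-1)}$, and the sphere volume $|\S^{2d-1}|$ all combine to exactly the asserted $2^{2d-1}$ uniformly across the four estimates. This is the step I expect to be fiddly rather than deep, and it is where I would be most careful to double-check the paper's normalization conventions for the spherical measure.
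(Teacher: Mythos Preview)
Your approach is correct for $L_1$ and $L_2$, but there is a genuine gap for $G_1$ and $G_2$: the bookkeeping is \emph{not} symmetric in the way you assume.

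In $G_1$ and $G_2$ the factors $k_3, l_4, m_5$ are attached to the variables $v_3, v_4, v_5$ --- precisely the variables that Lemma~\ref{lem:I} integrates out. If you follow your recipe and bound the Gaussians coming from those factors by $1$, then what survives for $G_2$ is only the product $e^{-\alpha|x+s(v-v_1)|^2-\beta|v_1|^2}\,e^{-\alpha|x+s(v-v_2)|^2-\beta|v_2|^2}$ (from $g_1 h_2$), with \emph{no} $e^{-\alpha|x|^2-\beta|v|^2}$ prefactor; the result is $e^{\alpha|x|^2+\beta|v|^2}\Gamma(s,x,v)$, not $\Gamma(s,x,v)$. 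For $G_1$ it is worse: the surviving weights come from $f$ and $(g_1+h_2)$, so each term of the sum carries Gaussian decay in only \emph{one} of $v_1,v_2$, and the $dv_1dv_2$ integral diverges.

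The paper fixes this with an identity you are missing: on the resonant manifold $\Sigma=0$, one has
\[
|x+s(v-v_3)|^2 + |x+s(v-v_4)|^2 + |x+s(v-v_5)|^2 \;=\; |x|^2 + |x+s(v-v_1)|^2 + |x+s(v-v_2)|^2,
\]
and together with $\Omega=0$ (which gives $|v_3|^2+|v_4|^2+|v_5|^2=|v|^2+|v_1|^2+|v_2|^2$) this transfers the Gaussian weights from the $v_3,v_4,v_5$ side to the $v,v_1,v_2$ side. For $G_1,G_2$ you therefore \emph{keep} the exponentials from $k_3,l_4,m_5$, rewrite them via the identity, bound the exponentials from $f,g_1,h_2$ by $1$, and only then apply Lemma~\ref{lem:I}; you land on exactly the same $v_1,v_2$ integral as in the $L_1$ case.

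A minor separate point: your transport computation has a sign slip. With $T^{-s}\phi(t,x,v)=\phi(t,x+sv,v)$, one gets $\phi(s,x+sv,v_j)=(T^{-s}\phi)(s,x+s(v-v_j),v_j)$, so the correct shift is $x+s(v-v_j)$, and your attempted reconciliation in Step~3 via ``the Gaussian depends only on the squared norm'' is incorrect since $x+s(v-v_j)$ and $x+s(v_j-v)$ are not $\pm$ of each other.
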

\begin{proof}
First consider $L_1$. We divide and multiply by the appropriate exponential weights and assemble the components of the triple norm, and we bound the exponentials dependent on $v_3,v_4,v_5$ by 1 to obtain 
    \begin{align}
        \bigg| &T^{-s} L_1(s)\bigg| = \bigg|  \int_{\R^{5d}}\delta(\Sigma)\delta(\Omega)T^{-s}f(x,v,s)T^{-s}g(x + s(v - v_1),v_1,s)
        \nonumber\\
        &\hspace{2cm} \times T^{-s}h(x + s(v - v_2),v_2,s)
        T^{-s}k(x +s(v - v_3),v_3,s) \nonumber \\
        & \hspace{2cm}\times \bigg(T^{-s}l(x + s(v - v_4),v_4,s) + T^{-s}m(x + s(v - v_5),v_5,s)\bigg)dv_1dv_2dv_3dv_4dv_5\nonumber\\
        & \le 2\lt T^{-(\cdot)}f\rt \lt T^{-(\cdot)}g\rt \lt T^{-(\cdot)}h\rt \lt T^{-(\cdot)}k\rt\bigg(\lt T^{-(\cdot)}l\rt + \lt T^{-(\cdot)}m\rt\bigg)\nonumber\\
        &\qquad \times\int_{\R^{5d}} \delta(\Sigma)\delta(\Omega)e^{-\alpha(|x|^2 + |x + s(v - v_1)|^2 + |x + s(v - v_2)|^2) - \beta(|v|^2 + |v_1|^2 + |v_2|^2)}dv_1dv_2dv_3dv_4dv_5.\label{L1}
    \end{align}
    Next we apply Lemma \ref{lem:I},
    and use that for $\boldsymbol{\omega}=(\omega_1,\omega_2) \in \mathbb{S}^{2d-1}$, $1 +\omega_1 \cdot \omega_2 \ge 1/2$ and $|\omega_1 \cdot (v_1 - v) + \omega_2\cdot (v_2 - v)| \le |v_1-v| +|v_2-v|$. So, we have 
    \begin{align}
        \bigg| &T^{-s} L_1(s) \bigg| 
        \le 2^{2d-1}\lt T^{-(\cdot)}f\rt\ \lt T^{-(\cdot)}g\rt\ \lt T^{-(\cdot)}h\rt\ \lt T^{-(\cdot)}k\rt \nonumber\\
        & \hspace{1cm}\times \bigg(\lt T^{-(\cdot)}l\rt + \lt T^{-(\cdot)}m\rt\bigg) e^{-\alpha|x|^2 - \beta|v|^2} \nonumber\\
        &\hspace{1cm} \times\int_{\R^{2d}}(|v_1 - v| + |v_2 - v|)^{2d-2} e^{-\alpha(|x + s(v - v_1)|^2 + |x + s(v - v_2)|^2) - \beta(|v_1|^2 + |v_2|^2)}dv_1dv_2,
    \end{align}   
which completes the proof of \eqref{L1_step1}. The estimate \eqref{L2_step1} for $L_2$ is done analogously.

Now we focus on proving the estimates \eqref{G1_step1} for the operator $G_1$.  First we notice the  following consequence of conservation of momentum and energy:
\begin{align}\label{identity}
    |x +& s(v - v_3)|^2 + |x + s(x - v_4)|^2 + |x + s(v - v_5)|^2 \nonumber \\
    &= 3|x|^2 + 2s x\cdot( 3v - v_3 - v_4 - v_5) 
    + s^2\left(3|v|^2 - 2v\cdot (v_3 + v_4 + v_5) + |v_3|^2 + |v_4|^2 + |v_5|^2\right)\nonumber\\
    & = 3|x|^2 + 2s x\cdot (2v - v_1 - v_2) 
    + s^2 \left(3|v^2| - 2v\cdot(v + v_1 + v_2) + |v|^2 + |v_1|^2 + |v_2|^2\right)\nonumber\\
    & = 3|x|^2 + 2s x\cdot ((v - v_1) + (v - v_2))
    + s^2(2|v|^2 - 2v\cdot(v_1 + v_2) + |v_1|^2 + |v_2|^2)\nonumber\\
   & = |x|^2 + |x + s(v - v_1)|^2 + |x  + s(v - v_2)|^2.
\end{align}

Just as in the proof of \eqref{L1_step1}, we multiply and divide by the appropriate exponentials to assemble the components of the triple norm. Then we have
\begin{align*}
    &\left\lvert T^{-s}G_1ds\right\rvert \\
    &= \bigg|\int_{\R^{5d}}\delta(\Sigma)\delta(\Omega)T^{-s}k(x + s(v - v_3), v_3,s)T^{-s}l(x + s(v - v_4), v_4,s)T^{-s}m(x + s(v - v_5),v_5,s)\\
    & \qquad \times \bigg[T^{-s}f(x,v,s)\bigg(T^{-s}g(x + s(v - v_1),v_1,s) + T^{-s}h(x + s(v - v_2),v_2,s)\bigg)\bigg]dv_1dv_2dv_3dv_4dv_5\bigg|\\
    & \leq \lt T^{-(\cdot)}k\rt\ \lt T^{-(\cdot)}l\rt\ \lt T^{-(\cdot)}m\rt\ \lt T^{-(\cdot)}f\rt\bigg(\lt T^{-(\cdot)}g\rt + \lt T^{-(\cdot)}h\rt\bigg)\\
    & \qquad \times \int_{\R^{5d}}\delta(\Sigma)\delta(\Omega)e^{-\alpha(|x + s(v - v_3)|^2 + |x + s(v - v_4)|^2 + |x + s(v - v_5)|^2) - \beta(|v_3|^2 + |v_4|^2 + |v_5|^2)}\\
    &\qquad \times e^{-\alpha|x|^2 - \beta|v|^2}\bigg(e^{-\alpha|x +s(v - v_1)|^2 - \beta|v_1|^2} + e^{-\alpha|x + s(v - v_2)|^2 - \beta|v_2|^2}\bigg)dv_1dv_2dv_3dv_4dv_5.
\end{align*}
We bound the last three exponentials  by 1, and apply \eqref{identity}  and conservation of energy encoded in $\Omega$ to obtain
\begin{align}
   &\left\lvert T^{-s}G_1ds\right\rvert \nonumber\\
   &\le 2\lt T^{-(\cdot)}k\rt\ \lt T^{-(\cdot)}l\rt\ \lt T^{-(\cdot)}m\rt\ \lt T^{-(\cdot)}f\rt\bigg(\lt T^{-(\cdot)}g\rt + \lt T^{-(\cdot)}h\rt\bigg)\nonumber\\
    &\qquad \times \int_{\R^{5d}}\delta(\Sigma)\delta(\Omega)e^{-\alpha(|x|^2 + |x +s(v - v_1)|^2 + |x + s(v - v_2)|^2) - \beta(|v|^2 + |v_1|^2 + |v_2|^2)}dv_1dv_2dv_3dv_4dv_5.\nonumber
\end{align}
Again, by applying Lemma \ref{lem:I},
    and using that for $\boldsymbol{\omega}=(\omega_1,\omega_2) \in \mathbb{S}^{2d-1}$, we have $1 +\omega_1 \cdot \omega_2 \ge 1/2$ and $|\omega_1 \cdot (v_1 - v) + \omega_2\cdot (v_2 - v)| \le |v_1-v| +|v_2-v|$, we get
    \begin{align}
        \bigg| &T^{-s} G_1(s) \bigg| \le 2^{2d-1} \lt T^{-(\cdot)}f\rt\ \lt T^{-(\cdot)}k\rt\ \lt T^{-(\cdot)}l\rt\ \lt T^{-(\cdot)}m\rt \nonumber\\
        & \hspace{1cm}\times \bigg(\lt T^{-(\cdot)}g\rt + \lt T^{-(\cdot)}h\rt\bigg) \, \Gamma(s,x,v), 
    \end{align}    
which completes the proof of \eqref{G1_step1}. The estimate \eqref{G2_step1} for $G_2$ is done analogously.
\end{proof}

\begin{proposition}\label{prop:DCT}
    Let $d\geq 1$. Let $\alpha,\beta > 0$ and suppose that functions $f,g,h,k,l,m \in \mathcal{F}$ are such that $T^{-(\cdot)}f, \,T^{-(\cdot)}g, \,T^{-(\cdot)}h, \, T^{-(\cdot)}k, \,T^{-(\cdot)}l, \,T^{-(\cdot)}m \in \M$. Then  we have
    \begin{align}
        & \left\|T^{-s}G_1[f,g,h,k,l,m](s)\right\|_{L^1_{x,v}} \in L^1(\R),\\
        &\left\|T^{-s}L_1[f,g,h,k,l,m](s)\right\|_{L^1_{x,v}} \in L^1(\R),\\
         & \left\|T^{-s}G_2[g,h,k,l,m](s)\right\|_{L^1_{x,v}} \in L^1(\R),\\
        &\left\|T^{-s}L_2[f,g,h,l,m](s)\right\|_{L^1_{x,v}} \in L^1(\R).
    \end{align}
\end{proposition}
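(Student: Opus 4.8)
The strategy is to reduce each of the four $L^1(\mathbb{R})$-integrability claims to a single estimate: that the function $s \mapsto \|\Gamma(s,\cdot,\cdot)\|_{L^1_{x,v}}$ belongs to $L^1(\mathbb{R})$, where $\Gamma$ is the kernel defined in \eqref{def:I}. Indeed, by Lemma \ref{lem:step1}, each of $|T^{-s}G_1(s)|$, $|T^{-s}L_1(s)|$, $|T^{-s}G_2(s)|$, $|T^{-s}L_2(s)|$ is bounded pointwise (in $x,v$) by a constant multiple of $\Gamma(s,x,v)$, the constant being a product of the relevant triple norms $\lt T^{-(\cdot)}f \rt$ etc., all finite by hypothesis. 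Integrating in $x$ and $v$, it therefore suffices to show
\begin{align}\label{eq:gamma-L1-goal}
    \int_{\mathbb{R}} \int_{\mathbb{R}^{2d}} \Gamma(s,x,v)\, dx\, dv\, ds < \infty.
\end{align}

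\textbf{Main step: estimating $\|\Gamma(s)\|_{L^1_{x,v}}$.} Fix $s \neq 0$. In the integral defining $\Gamma(s,x,v)$, change variables in the inner integral from $(v_1,v_2)$ to $(y_1,y_2) := (x + s(v - v_1), x + s(v-v_2))$, which has Jacobian $|s|^{-2d}$ and gives $v - v_i = (y_i - x)/s$, hence $v_i = v - (y_i-x)/s$. The weight $e^{-\alpha|x|^2 - \beta|v|^2}$ times $e^{-\alpha(|y_1|^2+|y_2|^2)}$ already supplies Gaussian decay in $x$, $v$, $y_1$, $y_2$; the remaining factors $e^{-\beta(|v_1|^2+|v_2|^2)}$ are bounded by $1$ and the polynomial factor $(|v_1-v|+|v_2-v|)^{2d-2} = |s|^{-(2d-2)}(|y_1-x|+|y_2-x|)^{2d-2}$ is dominated by a polynomial in $x,y_1,y_2$, which is absorbed by the Gaussians. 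Integrating in $x, v, y_1, y_2$ then yields a bound of the form $\|\Gamma(s)\|_{L^1_{x,v}} \le C_{\alpha,\beta,d}\, |s|^{-(2d-2)} \cdot |s|^{-2d} \cdot (\text{something})$; this is not yet integrable near $s=0$, so for small $|s|$ one argues differently — without the change of variables, one simply bounds $\Gamma(s,x,v) \le e^{-\alpha|x|^2 - \beta|v|^2} \int_{\mathbb{R}^{2d}} (|v_1-v|+|v_2-v|)^{2d-2} e^{-\beta(|v_1|^2+|v_2|^2)} dv_1 dv_2$ by dropping the $x$-dependent exponentials (they are $\le 1$), which gives a finite constant independent of $s$ (the $v_1,v_2$ integral converges because of the Gaussians), and then $\|\Gamma(s)\|_{L^1_{x,v}} \le C_{\alpha,\beta,d}$ uniformly, handling $|s| \le 1$. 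Thus one splits $\int_{\mathbb{R}} = \int_{|s|\le 1} + \int_{|s|>1}$: the first piece is finite by the uniform bound, and the second requires the decay in $s$.

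\textbf{Obtaining decay for large $|s|$.} The delicate point — and the main obstacle — is extracting enough decay in $s$ from the large-$|s|$ regime so that $\int_{|s|>1} \|\Gamma(s)\|_{L^1_{x,v}}\, ds < \infty$. After the change of variables above and integrating out $y_1, y_2, v$, the surviving $x$-integral must be controlled: the point is that the constraint linking $x$ to $(y_1,y_2,v)$ through $v_i = v - (y_i-x)/s$ inside $e^{-\beta|v_i|^2}$ is what produces $s$-decay once one does \emph{not} merely bound those exponentials by $1$. Concretely, keeping $e^{-\beta(|v_1|^2 + |v_2|^2)}$ and completing the square, the Gaussian integral over the new variables produces a factor decaying like a negative power of $|s|$ coming from the determinant of the resulting quadratic form (which grows in $|s|$). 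A cleaner route that avoids tracking determinants: use the conservation identity \eqref{identity} already exploited in the proof of Lemma \ref{lem:step1} to symmetrize, then observe that $\Gamma(s,x,v)$ is, up to constants, a convolution-type expression in which the Gaussian $e^{-\alpha s^2|v-v_i|^2}$-type factors (hidden in $e^{-\alpha|x+s(v-v_i)|^2}$ after translating $x$) force $|v - v_i| \lesssim |s|^{-1}$ on the effective support, shrinking the $v_1, v_2$ integration domain by a factor $|s|^{-2d}$ while the polynomial factor contributes only $|s|^{-(2d-2)}$, for a net integrable decay $|s|^{-(2d-2)} \cdot |s|^{-2d}\cdot|s|^{2d} = |s|^{-(2d-2)}$ — wait, this needs care, so in practice one keeps all Gaussians and evaluates the full $(4d+\text{dim})$-dimensional Gaussian integral exactly, reading off the power of $|s|$ from the determinant; for $d \ge 1$ this power is strictly negative and at least $-2$ (indeed $\sim |s|^{-2d}$ after all reductions), which is integrable at infinity. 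I would present this as: (i) the uniform bound for $|s|\le 1$; (ii) for $|s|>1$, perform the $(v_1,v_2)\mapsto(y_1,y_2)$ change of variables, retain the Gaussian factors $e^{-\beta|v_i|^2}$ rewritten in $(x,y_i,v)$, and evaluate the resulting multidimensional Gaussian integral in $x,v,y_1,y_2$ in closed form, whose prefactor is a negative power of $|s|$ tending to $0$ fast enough (e.g. like $|s|^{-d}$ or faster) to be integrable on $\{|s|>1\}$. Combining (i) and (ii) with Tonelli's theorem gives \eqref{eq:gamma-L1-goal}, and hence all four conclusions of the proposition.
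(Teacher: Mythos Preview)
Your reduction to showing $\int_\R \|\Gamma(s,\cdot,\cdot)\|_{L^1_{x,v}}\, ds < \infty$ via Lemma~\ref{lem:step1} is exactly right and matches the paper. Where you diverge is in how you estimate this triple integral, and the paper's route is considerably simpler.

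The paper avoids the small-/large-$|s|$ split entirely. Writing $\mathbf{x}=(x,x)$ and $\mathbf{u}=(v_1-v,\,v_2-v)\in\R^{2d}$, the two shifted Gaussians combine into $e^{-\alpha|\mathbf{x}+s\mathbf{u}|^2}$, and one simply performs the $s$-integral \emph{first} using Lemma~\ref{lem:time}:
\[
\int_\R e^{-\alpha|\mathbf{x}+s\mathbf{u}|^2}\,ds \le \sqrt{\pi}\,\alpha^{-1/2}\,|\mathbf{u}|^{-1}.
\]
This converts the polynomial weight $(|v_1-v|+|v_2-v|)^{2d-2}$ into one of order $2d-3$; Lemma~\ref{lem:conv} then bounds the remaining $(v_1,v_2)$-integral by $C_{d,\beta}(1+|v|^{(2d-3)^+})$, after which the $x,v$ Gaussians finish the job. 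No case analysis, no change of variables, no determinant bookkeeping.

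Your own route can also be made to work, but you have not recognized that you already crossed the finish line in your ``Main step'' paragraph. After the substitution $(v_1,v_2)\mapsto(y_1,y_2)$ with Jacobian $|s|^{-2d}$, and simply bounding $e^{-\beta(|v_1|^2+|v_2|^2)}\le 1$, the remaining integral $\int_{\R^{2d}}(|y_1-x|+|y_2-x|)^{2d-2}e^{-\alpha(|y_1|^2+|y_2|^2)}\,dy_1dy_2$ is polynomially bounded in $x$ and absorbed by $e^{-\alpha|x|^2}$, giving precisely $\|\Gamma(s)\|_{L^1_{x,v}}\le C_{\alpha,\beta,d}\,|s|^{-(4d-2)}$. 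Since $4d-2\ge 2$ for all $d\ge 1$, this is integrable on $\{|s|>1\}$ and, combined with your uniform bound for $|s|\le 1$, you are done. Everything in your ``Obtaining decay for large $|s|$'' paragraph is therefore unnecessary: the spurious $|s|^{2d}$ factor, the suggestion that one must retain $e^{-\beta|v_i|^2}$, and the vague appeal to a Gaussian determinant only obscure an argument you had already completed.
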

\begin{proof} Due to Lemma \ref{lem:step1}, it suffices to prove that 
\begin{align}
     \|\Gamma(s,x,v)\|_{L^1_{x,v}} \in L^1(\R).
\end{align}
Note that if we use notation   $\mathbf{x} = \begin{pmatrix}
    x\\x
\end{pmatrix}$ and $\mathbf{u} = \begin{pmatrix}
    v_1 - v\\
    v_2 - v
\end{pmatrix}$, 
as was the case in the study of the binary-ternary Boltzmann equation \cite{amga22}, 
we can write
\begin{align*}
    \int_\R  \|\Gamma(s,x,v)\|_{L^1_{x,v}}  ds 
     =  & \int_{\R^{2d}}e^{-\alpha|x|^2 - \beta|v|^2}
    \int_{\R^{2d}}(|v_1 - v| + |v_2 - v|)^{2d-2}e^{-\beta(|v_1|^2 + |v_2|^2)} \\
    &\times\int_\R e^{-\alpha|\mathbf{x} + s\mathbf{u}|^2}ds dv_1dv_2dxdv.      
\end{align*}
By applying Lemma \ref{lem:time} and then Lemma \ref{lem:conv}, 
we have
\begin{align*}
\int_\R  \|\Gamma(s,x,v)\|_{L^1_{x,v}}  ds
    & \lesssim \alpha^{-\frac{1}{2}}  \int_{\R^{2d}}e^{-\alpha|x|^2 - \beta|v|^2}
        \int_{\R^{2d}}(|v_1 - v| + |v_2 - v|)^{2d-3}e^{-\beta(|v_1|^2 + |v_2|^2)}dv_1dv_2dxdv \\
    & \lesssim C_{\beta,d} \, \alpha^{-1/2} \int_{\R^{2d}}e^{-\alpha|x|^2 - \beta|v|^2} (1 + |v|^{(2d-3)^+})dxdv \\
    &\le C_{\alpha, \beta, d}.
\end{align*}
where $C_{\beta,d}$ is defined in \eqref{conv_constant} and $C_{\alpha, \beta, d}$ is a positive constant depending on $\alpha, \beta$ and $d$.
This concludes the proof of the proposition.
\end{proof}

Our proofs of Theorems \ref{thm:wke well posed}, \ref{thm:nonnegativity} and \ref{thm: scattering 1} rely on the following estimate in the triple norm of the time averages of the transported $G$ and $L$ operators.

 \begin{proposition}\label{prop:gl_bounds}
  Let $d = 1$. Let $\alpha, \beta >0$ and suppose that  functions $f,g,h, k,l, m \in \mathcal{F}$ are such that $T^{-(\cdot)}f, \,T^{-(\cdot)}g,\, T^{-(\cdot)}h, \,T^{-(\cdot)}k,\, T^{-(\cdot)}l, \,T^{-(\cdot)}m \in \M$. Then for any $t,a \in {\R} \cup \{-\infty, \infty\}$, the following estimates hold
     \begin{align}
         &\ld \int_{t}^{a} T^{-s}G_1[f,g,h,k,l,m](s)ds\rd 
         \le 4C_{1,\beta} \alpha^{-1/2}\lt T^{-(\cdot)}f\rt\,\lt T^{-(\cdot)}k\rt\,\lt T^{-(\cdot)}l\rt\, \nonumber\\
         & \hspace{5cm} \times \lt T^{-(\cdot)}m\rt
        \bigg(\lt T^{-(\cdot)}g\rt + \lt T^{-(\cdot)}h\rt\bigg), \label{G1 estimate}\\
        & \ld \int_{t}^{a} T^{-s}G_2[g,h,k,l,m](s)ds\rd 
        \le 4 C_{1,\beta} \alpha^{-1/2}\lt T^{-(\cdot)}k\rt\,\lt T^{-(\cdot)}l\rt\,\lt T^{-(\cdot)}m\rt \nonumber\\
        & \hspace{5cm} \times \lt T^{-(\cdot)}g\rt\,\lt T^{-(\cdot)}h\rt, \label{G2 estimate}\\
        &\ld \int_{t}^{a} T^{-s}L_1[f,g,h,k,l,m](s)ds\rd  
        \le 4 C_{1,\beta} \alpha^{-1/2} \lt T^{-(\cdot)}f\rt\,\lt T^{-(\cdot)}g\rt\,\lt T^{-(\cdot)}h\rt \nonumber\\
        & \hspace{5cm} \times \lt T^{-(\cdot)}k\rt \bigg(\lt T^{-(\cdot)}l\rt + \lt T^{-(\cdot)}m\rt\bigg), \label{L1 estimate}\\
        &\ld \int_{t}^{a} T^{-s}L_2[f,g,h,l,m](s)ds\rd 
        \le 4 C_{1,\beta} \alpha^{-1/2}\lt T^{-(\cdot)}f\rt\,\lt T^{-(\cdot)}g\rt\,\lt T^{-(\cdot)}h\rt \nonumber\\
        & \hspace{5cm} \times 
        \lt T^{-(\cdot)}l\rt\,
        \lt T^{-(\cdot)}m\rt,\label{L2 estimate}
    \end{align}
$C_{1,\beta}$ is given \eqref{conv_constant}.
 \end{proposition}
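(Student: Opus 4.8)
The plan is to combine the pointwise bounds from Lemma~\ref{lem:step1} with the time-integrability established in Proposition~\ref{prop:DCT}, reducing everything to a single scalar estimate on the time-integral of $\Gamma$. First I would fix the operator, say $L_1$, and note that by Lemma~\ref{lem:step1} the integrand $T^{-s}L_1[f,g,h,k,l,m](s)$ is bounded pointwise in absolute value by $2^{2d-1}$ times the product of the five relevant triple norms times $\Gamma(s,x,v)$. Hence
\[
\left| \int_t^a T^{-s}L_1[f,g,h,k,l,m](s)\, ds \right| \le 2^{2d-1}\, \lt T^{-(\cdot)}f\rt\, \lt T^{-(\cdot)}g\rt\, \lt T^{-(\cdot)}h\rt\, \lt T^{-(\cdot)}k\rt\, \bigl(\lt T^{-(\cdot)}l\rt + \lt T^{-(\cdot)}m\rt\bigr) \int_{\R} \Gamma(s,x,v)\, ds,
\]
using that $[t,a]\subset\R\cup\{\pm\infty\}$ and $\Gamma\ge0$, so extending to all of $\R$ only increases the bound; the finiteness of $\int_\R \Gamma(s,x,v)\,ds$ for a.e.\ $(x,v)$ is guaranteed by Proposition~\ref{prop:DCT} (and the explicit computation inside its proof). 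Taking the supremum over $x,v$ of the left-hand side multiplied by $e^{\alpha|x|^2+\beta|v|^2}$ then gives the $\|\cdot\|_{\alpha,\beta}$-norm of the time integral, so it remains to bound $\sup_{x,v} e^{\alpha|x|^2+\beta|v|^2}\int_\R \Gamma(s,x,v)\,ds$.

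For the remaining scalar estimate I would specialize to $d=1$ and carry out the same two-step computation used in the proof of Proposition~\ref{prop:DCT}, but \emph{keeping the outer variables $x,v$ fixed} rather than integrating them. Writing $\mathbf{x}=(x,x)$ and $\mathbf{u}=(v_1-v,\,v_2-v)$, one has
\[
e^{\alpha|x|^2+\beta|v|^2}\int_\R \Gamma(s,x,v)\, ds = \int_{\R^{2d}} (|v_1-v|+|v_2-v|)^{2d-2}\, e^{-\beta(|v_1|^2+|v_2|^2)} \left( \int_\R e^{-\alpha|\mathbf{x}+s\mathbf{u}|^2}\, ds \right) dv_1\, dv_2.
\]
Applying Lemma~\ref{lem:time} to the inner time integral produces a factor $\lesssim \alpha^{-1/2}$ times $(|v_1-v|+|v_2-v|)^{-1}$ (after dividing by $|\mathbf{u}|$, noting $|\mathbf{u}|\gtrsim |v_1-v|+|v_2-v|$ up to a dimensional constant), which with $d=1$ turns the velocity integrand into $(|v_1-v|+|v_2-v|)^{2d-3}=(|v_1-v|+|v_2-v|)^{-1}$ times Gaussians in $v_1,v_2$. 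Lemma~\ref{lem:conv} then controls this convolution-type integral by $C_{1,\beta}$ uniformly in $v$ (this is precisely where the constant $C_{1,\beta}$ from \eqref{conv_constant} enters, and where $d=1$ is used so that the residual polynomial weight $(1+|v|^{(2d-3)^+})$ is just $1$). Collecting the numerical factors $2^{2d-1}=2$ and the constant from Lemma~\ref{lem:time} gives the coefficient $4C_{1,\beta}\alpha^{-1/2}$.

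The estimates for $G_1$, $G_2$, and $L_2$ follow by exactly the same argument, using the respective pointwise bounds \eqref{G1_step1}, \eqref{G2_step1}, \eqref{L2_step1} from Lemma~\ref{lem:step1}, which feature the same $\Gamma(s,x,v)$ and the same $2^{2d-1}$ prefactor but different groupings of the triple norms; no new ideas are needed. I expect the main obstacle to be bookkeeping rather than conceptual: making sure the constant is genuinely $4C_{1,\beta}\alpha^{-1/2}$ and not merely $\lesssim \alpha^{-1/2}$, which requires tracking the explicit constant in Lemma~\ref{lem:time} and verifying that in dimension one the $v$-dependent factor coming out of Lemma~\ref{lem:conv} is bounded by $C_{1,\beta}$ with no leftover $v$-growth; a secondary point to be careful about is justifying the interchange of the $\sup_{x,v}$ with the (improper) time integral and confirming the bound is uniform over all choices of endpoints $t,a\in\R\cup\{\pm\infty\}$, both of which are handled by the nonnegativity of $\Gamma$ and Proposition~\ref{prop:DCT}.
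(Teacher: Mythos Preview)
Your proposal is correct and follows essentially the same route as the paper: reduce via Lemma~\ref{lem:step1} to the scalar bound $\ld \int_t^a \Gamma(s,x,v)\,ds\rd \le 2C_{1,\beta}\alpha^{-1/2}$, extend the time integral to all of $\R$ by nonnegativity of $\Gamma$, apply Lemma~\ref{lem:time} with $\mathbf{x}=(x,x)$, $\mathbf{u}=(v_1-v,v_2-v)$, then Lemma~\ref{lem:conv} with $d=1$ so that $(2d-3)^+=0$. The only extraneous ingredient in your write-up is the appeal to Proposition~\ref{prop:DCT} for a.e.\ finiteness of $\int_\R \Gamma(s,x,v)\,ds$; since you go on to establish the explicit pointwise bound directly (as the paper does), that reference is unnecessary, and indeed an a.e.\ statement would not by itself suffice for the $\ld\cdot\rd$-norm conclusion.
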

 \begin{remark}
     We emphasize that the right-hand sides of the estimates in the above proposition do not depend on time. Consequently, we obtain same bounds when supremum in time is applied. For example, 
     \begin{align*}
         \sup_{t \in \R}\ld \int_{t}^{a} T^{-s}G_1[f,g,h,k,l,m](s)ds\rd 
         &\le 4C_{1,\beta} \alpha^{-1/2}\lt T^{-(\cdot)}f\rt\,\lt T^{-(\cdot)}k\rt\,\lt T^{-(\cdot)}l\rt\\
          &\qquad \times \lt T^{-(\cdot)}m\rt
        \bigg(\lt T^{-(\cdot)}g\rt + \lt T^{-(\cdot)}h\rt\bigg).
     \end{align*}
 \end{remark}
\begin{proof}
   By Lemma \ref{lem:step1}, it suffices to prove that
   \begin{align}\label{int_I}
       \ld \int_{t}^{a} \Gamma(s,x,v) ds \rd \le 2 C_{1,\beta} \alpha^{-1/2}.
   \end{align}

 We have
 \begin{align*}
     \int_{t}^{a} \Gamma(s,x,v) ds
      \le & e^{-\alpha|x|^2 - \beta|v|^2}
        \int_\R\int_{\R^{2d}}(|v_1 - v| + |v_2 - v|)^{2d-2} \\
       & \hspace{2cm}\times e^{-\alpha(|x + s(v - v_1)|^2 + |x + s(v - v_2)|^2) - \beta(|v_1|^2 + |v_2|^2)}dv_1dv_2ds.
 \end{align*}
     If we let $\mathbf{x} = \begin{pmatrix}
        x\\x
    \end{pmatrix}$ and $\mathbf{u} = \begin{pmatrix}
        v_1 - v\\
        v_2 - v
    \end{pmatrix}$,
    and apply Lemma \ref{lem:time} and then Lemma \ref{lem:conv}, we get
    \begin{align*}
     \int_{t}^{a} \Gamma(s,x,v) ds
       & \le 2 \alpha^{-1/2} e^{-\alpha|x|^2 - \beta|v|^2} \int_{\R^{2d}}(|v_1 - v| + |v_2 - v|)^{2d-3}e^{-\beta(|v_1|^2 + |v_2|^2)}dv_1dv_2\\
      & \leq  2C_{d,\beta} \alpha^{-1/2}(1 + |v|^{(2d-3)^+}) e^{-\alpha|x|^2 - \beta|v|^2}, 
      \end{align*} 
where $C_{d,\beta}$ is given \eqref{conv_constant}. Since $d = 1$, then $(2d-3)^+ = 0$, and so
\begin{align*}
       \int_{t}^{a} \Gamma(s,x,v) ds& \leq 2 C_{1,\beta} \alpha^{-1/2}e^{-\alpha|x|^2 - \beta|v|^2},
      \end{align*} where $C_{1,\beta}$ is given \eqref{conv_constant}. This implies \eqref{int_I}, and so the the proof is complete.
\end{proof}
\begin{remark}
Note that when we apply Lemma \ref{lem:conv}, we need that $2d - 3 \leq 0$ so that $\left\|\left|\int_{t}^{a} \Gamma(s,x,v)\right\|\right|_{\alpha,\beta}$ is bounded by a constant. This requires $d \leq \frac{3}{2}$, constraining the dimensions that we can consider to $d = 1$. 
\end{remark}

\begin{remark}
    As a consequence of the proof of Proposition \ref{prop:gl_bounds}, under the same assumptions as in the proposition, we have the following bound     
    \begin{align}\label{int_convergent}
        \int_\R \ld T^{-s}\C[f,g,h,k,l,m](s)\rd ds < \infty,
    \end{align}
which is used in Section \ref{sec:scattering}.
\end{remark}

Our proof of Theorem \ref{thm:KS_WP} relies on the following $L^1_{x,v}$ estimates of $L$ and $G$ operators as well as the immediate corollary.
\begin{proposition}\label{prop:l1_estimate}
     Let $d \geq 1$ and 
    $\alpha,\beta > 0$. Then there is a positive constant $C_{\alpha,\beta,d} = C(\alpha, \beta, d)$ such that the following hold: 
    For any $f,g,h,k,l,m \in \mathcal{F}^+$, with $T^{-(\cdot)} f, T^{-(\cdot)}  g, T^{-(\cdot)}  h, T^{-(\cdot)}  k, T^{-(\cdot)}  l$, $T^{-(\cdot)}  m \in L^\infty(\R,\m)$, and $t \in \R$, we have 
        \begin{align}
            \|T^{-t}L_1[f,g,h,k,l,m](t)\|_{L^1_{x,v}} &\leq C_{\alpha,\beta,d} \lt \T f \rt \lt \T g \rt \lt \T h \rt \lt \T k \rt \nonumber\\
            &\qquad \times \left(\lt \T l \rt + \lt \T m \rt \right),\label{l1_L1}\\
            \|T^{-t}L_2[f,g,h,l,m](t)\|_{L^1_{x,v}} &\leq C_{\alpha,\beta,d}\lt \T f \rt \lt \T g \rt \lt \T h \rt\nonumber\\
            &\qquad \times \lt \T l \rt \lt \T m \rt, \label{l1_L2}\\
            \|T^{-t}G_1[f,g,h,k,l,m](t)\|_{L^1_{x,v}} &\leq C_{\alpha,\beta,d} \lt \T k \rt \lt \T l \rt \lt \T m \rt \lt \T f \rt \nonumber\\
            &\qquad \times \left(\lt \T g \rt + \lt \T h \rt\right),\label{l1_G1}\\
            \|T^{-t}G_2[g,h,k,l,m](t)\|_{L^1_{x,v}} & \leq C_{\alpha,\beta,d} \lt \T g \rt \lt \T h \rt \lt \T k \rt \nonumber\\
            &\qquad \times \lt \T l \rt \lt \T m \rt.\label{l1_G2}
        \end{align}
        Moreover, \[\T G[f,g,h,k,l,m], ~\T L[f,g,h,k,l,m] \in L^\infty(\R,L^{1+}_{x,v}).\]
\end{proposition}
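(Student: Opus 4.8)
The plan is to deduce the four displayed $L^1_{x,v}$ bounds directly from the pointwise estimates of Lemma \ref{lem:step1}, so that everything reduces to a single uniform-in-time bound on the kernel $\Gamma$ from \eqref{def:I}, and then to obtain the ``Moreover'' claim by combining the decompositions $G = G_1 + G_2$, $L = L_1 + L_2$ with the obvious non-negativity of the collision operator on non-negative inputs. Concretely, for a fixed $t \in \R$ I would apply Lemma \ref{lem:step1} with $s = t$, which gives, e.g.,
\[
\bigl| T^{-t}L_1[f,g,h,k,l,m](t) \bigr| \le 2^{2d-1}\,\lt \T f \rt \lt \T g \rt \lt \T h \rt \lt \T k \rt \bigl(\lt \T l \rt + \lt \T m \rt\bigr)\,\Gamma(t,x,v),
\]
and the analogous pointwise bounds for $G_1$, $G_2$, $L_2$; integrating in $(x,v)$, each of \eqref{l1_L1}--\eqref{l1_G2} follows once one knows that $\sup_{t\in\R}\|\Gamma(t,\cdot,\cdot)\|_{L^1_{x,v}}$ is finite, since the common prefactor $2^{2d-1}$ then determines the constant $C_{\alpha,\beta,d}$ appearing in the statement.

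To bound $\|\Gamma(t,\cdot,\cdot)\|_{L^1_{x,v}}$ uniformly in $t$, the key observation is that — unlike in the proof of Proposition \ref{prop:gl_bounds}, where one integrates $\Gamma(s,x,v)$ over $s$ and invokes Lemma \ref{lem:time} to extract the factor $\alpha^{-1/2}$ — here one should instead simply discard the two $t$-dependent Gaussians, bounding $e^{-\alpha(|x+t(v-v_1)|^2 + |x+t(v-v_2)|^2)} \le 1$. This decouples the $x$-integration from the velocity integrations and leaves, uniformly in $t$,
\[
\|\Gamma(t,\cdot,\cdot)\|_{L^1_{x,v}} \le \Bigl(\int_{\R^d} e^{-\alpha|x|^2}\,dx\Bigr)\int_{\R^d} e^{-\beta|v|^2}\!\int_{\R^{2d}}(|v_1-v|+|v_2-v|)^{2d-2}\,e^{-\beta(|v_1|^2+|v_2|^2)}\,dv_1dv_2\,dv.
\]
The $x$-integral is $(\pi/\alpha)^{d/2}$, and the velocity integral is finite because a Gaussian dominates the polynomial weight: one may estimate the inner $dv_1dv_2$ integral, via the convolution bound of Lemma \ref{lem:conv}, by a constant (depending on $\beta,d$) times $1 + |v|^{2d-2}$, and then note that $(1+|v|^{2d-2})e^{-\beta|v|^2} \in L^1(\R^d)$. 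This also explains why no dimensional restriction is needed here, in contrast to Proposition \ref{prop:gl_bounds}: the obstruction there comes from measuring the velocity factor in the weighted sup-norm $\ld \cdot \rd$, whereas in $L^1_{x,v}$ any polynomial times a Gaussian is integrable.

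For the ``Moreover'' part I would write $G[f,g,h,k,l,m] = G_1[f,g,h,k,l,m] + G_2[g,h,k,l,m]$ and $L = L_1 + L_2$, so that the triangle inequality together with \eqref{l1_G1}--\eqref{l1_G2} (resp. \eqref{l1_L1}--\eqref{l1_L2}) bounds $\|T^{-t}G[f,g,h,k,l,m](t)\|_{L^1_{x,v}}$ (resp. the analogue for $L$) by a finite combination of the triple norms of $\T f, \dots, \T m$, and this bound is independent of $t$; hence $\T G[f,g,h,k,l,m]$ and $\T L[f,g,h,k,l,m]$ belong to $L^\infty(\R, L^1_{x,v})$. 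Non-negativity is immediate from the definitions \eqref{def-g}--\eqref{def-l}: when $f,g,h,k,l,m \ge 0$ the integrands are products of non-negative functions integrated against the non-negative measure $\delta(\Sigma)\delta(\Omega)\,dv_1\cdots dv_5$, so $G[f,g,h,k,l,m], L[f,g,h,k,l,m] \ge 0$, and $T^{-t}$ — a translation in $x$ — preserves non-negativity; thus $\T G[f,g,h,k,l,m], \T L[f,g,h,k,l,m] \in L^\infty(\R, L^{1,+}_{x,v})$.

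The only step requiring real care is the uniform-in-$t$ bound on $\|\Gamma(t,\cdot,\cdot)\|_{L^1_{x,v}}$ — that is, recognizing that one must throw away the time-dependent Gaussians rather than exploit them (as one does for the time-integrated estimates in Propositions \ref{prop:DCT} and \ref{prop:gl_bounds}), and checking that the surviving velocity integral converges for every $d \ge 1$; everything else is bookkeeping built on Lemma \ref{lem:step1}.
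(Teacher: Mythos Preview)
Your proposal is correct and follows essentially the same approach as the paper's own proof: reduce via Lemma~\ref{lem:step1} to a uniform-in-$t$ bound on $\|\Gamma(t,\cdot,\cdot)\|_{L^1_{x,v}}$, discard the $t$-dependent Gaussians, integrate out $x$, and apply Lemma~\ref{lem:conv} to control the remaining velocity integral. Your treatment is in fact slightly more complete than the paper's, since you spell out the ``Moreover'' claim (non-negativity and the $L^\infty(\R,L^{1,+}_{x,v})$ membership) explicitly, whereas the paper leaves it implicit.
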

\begin{proof}
By Lemma \ref{lem:step1}, it suffices to prove that for any $t\in \R$ we have
\begin{align}\label{int_L1}
   \left\| \Gamma(t,x,v) \right\|_{L^1_{x,v}} \le C_{\alpha, \beta, d}. 
\end{align}
Bounding the exponentials dependent on $t$ in the definition of $\Gamma$ in \eqref{def:I} by 1, it is clear that 
\begin{align*}
     \left\| \Gamma(t,x,v) \right\|_{L^1_{x,v}}
      \le \int_{\R^{2d}}e^{-\alpha|x|^2 - \beta|v|^2}\int_{\R^{2d}}(|v - v_1| + |v - v_2|)^{2d-2}e^{-\beta(|v_1|^2 + |v_2|^2)}dv_1dv_2dxdv.
\end{align*}
Then, calculating the integral in $x$ and applying Lemma \ref{lem:conv} yields
        \begin{align}
            \left\| \Gamma(t,x,v) \right\|_{L^1_{x,v}}
             &\leq C_\alpha  \int_{\R^{2d}}(1 + |v|^{(2d-2)^+})e^{-\beta|v|^2}dv
             \le C_{\alpha,\beta,d}, 
    \end{align}
     since $(1 + |v|^{(2d -2)^+})e^{-\beta|v|^2} \in L^1_v$. This completes the proof.
\end{proof}

Proposition \ref{prop:l1_estimate} also implies an $L^1_{x,v}$-continuity for the transported $L$ and $G$ operators, which is stated in the following corollary.
\begin{corollary}\label{cor:l1_conv}
    Let $d\ge 1$ and
    $\alpha,\beta > 0$. For $i \in \{1,2,3,4,5,6\}$, consider  sequences $\{f^{i,n}\}_n \subset \mathcal{F}^+$ with $T^{-t}f^{i,n}(t) \in \m$ for all $t\in \R$,  and functions $f^i \in \mathcal{F}^+$ such that $T^{-t} f^{i,n}(t) \overset{\m}{\longrightarrow} T^{-t}f^i(t)$, for all $t \in \R$. Then, for all $t \in \R$, the following convergence holds, 
    \begin{align*}
        \bigg(T^{-t}L[f^{1,n},f^{2,n},f^{3,n},f^{4,n},f^{5,n},f^{6,n}](t), \,\, T^{-t}G[f^{1,n},f^{2,n},f^{3,n},f^{4,n},f^{5,n},f^{6,n}](t)\bigg) \\
    \overset{L^1_{x,v}}{\longrightarrow} \bigg(T^{-t}L[f^1,f^2,f^3,f^4,f^5,f^6](t), \,\, T^{-t}G[f^1,f^2,f^3,f^4,f^5,f^6](t)\bigg).
    \end{align*}
\end{corollary}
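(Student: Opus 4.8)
\textbf{Proof proposal for Corollary \ref{cor:l1_conv}.}

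The plan is to exploit the multilinear structure of $G$ and $L$ together with the $L^1_{x,v}$ bounds of Proposition \ref{prop:l1_estimate}, reducing the convergence of the full operators to the convergence of each linear slot. Concretely, I would first fix $t \in \R$ and write the difference $T^{-t}L[f^{1,n},\dots,f^{6,n}](t) - T^{-t}L[f^{1},\dots,f^{6}](t)$ as a telescoping sum over the six arguments, using the linear decompositions \eqref{L1 decomp} and \eqref{L2 decomp} (and \eqref{G1 decomp}, \eqref{G2 decomp} for $G$) after first splitting $L = L_1 + L_2$ via \eqref{G and L split}. Each term in the telescoping sum is one of the operators $L_1, L_2, G_1, G_2$ evaluated at a single difference $f^{i,n} - f^{i}$ in one slot and at either the limits $f^{j}$ or the iterates $f^{j,n}$ in the remaining slots.

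Next, I would apply the appropriate estimate among \eqref{l1_L1}--\eqref{l1_G2} to each such term. The key point is that each term is controlled in $L^1_{x,v}$ by a product of triple norms in which exactly one factor is $\lt T^{-(\cdot)}(f^{i,n} - f^{i})\rt$ (or, in the slots of $L_1$ paired as a vector $(l,m)$, a sum of two such factors), while the other factors are triple norms of $T^{-(\cdot)}f^{j,n}$ or $T^{-(\cdot)}f^{j}$. Since $T^{-t}f^{i,n}(t) \overset{\m}{\to} T^{-t}f^{i}(t)$ for each fixed $t$, the factor $\ld T^{-t}(f^{i,n} - f^{i})(t)\rd$ tends to $0$; note, however, that Proposition \ref{prop:l1_estimate} is stated with the \emph{triple norm} $\lt\cdot\rt = \sup_{s}\ld\cdot(s)\rd$ rather than the norm at the single time $t$. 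Inspecting the proof of Proposition \ref{prop:l1_estimate} (which goes through Lemma \ref{lem:step1}, and there each argument of $L$ or $G$ is evaluated only at the single transported time), one sees that the estimate in fact holds with $\lt T^{-(\cdot)}f\rt$ replaced by $\ld T^{-t}f(t)\rd$ for the fixed $t$; I would invoke this pointwise-in-$t$ version. The remaining factors stay bounded because convergence in $\m$ implies $\ld T^{-t}f^{i,n}(t)\rd$ is bounded uniformly in $n$ (a convergent sequence in a normed space is bounded). Hence every term in the telescoping sum is bounded by $C_{\alpha,\beta,d}\,\ld T^{-t}(f^{i,n}-f^{i})(t)\rd$ times a quantity bounded uniformly in $n$, so each term $\to 0$, and summing the finitely many terms gives $T^{-t}L[f^{1,n},\dots](t) \overset{L^1_{x,v}}{\to} T^{-t}L[f^1,\dots](t)$. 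The argument for $G$ is identical, using \eqref{G1 decomp}, \eqref{G2 decomp} and \eqref{l1_G1}, \eqref{l1_G2}.

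I expect the main obstacle to be the bookkeeping around which norm (triple norm versus fixed-time norm) the estimates are applied in: Proposition \ref{prop:l1_estimate} as stated uses the time-supremum norm, but the hypothesis of the corollary only gives convergence $T^{-t}f^{i,n}(t)\to T^{-t}f^i(t)$ in $\m$ for each fixed $t$, not convergence in $L^\infty(\R,\m)$. The cleanest resolution, which I would adopt, is to observe that the whole chain Lemma \ref{lem:step1} $\Rightarrow$ Proposition \ref{prop:l1_estimate} only ever uses the value of each input function at the single time appearing in the transported operator, so one has the pointwise-in-$t$ refinement $\|T^{-t}L_j[\cdots](t)\|_{L^1_{x,v}} \le C_{\alpha,\beta,d}\prod \ld T^{-t}(\cdot)(t)\rd$; with that in hand the rest is routine multilinearity. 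A secondary, purely cosmetic point is handling the $(l,m)$-vector slot in $L_1$ (resp. the $(g,h)$-vector slot in $G_1$), where a single telescoping step replaces both $l,m$ simultaneously — there one uses the triangle inequality to split $\ld T^{-t}(l^n - l)(t)\rd + \ld T^{-t}(m^n - m)(t)\rd$, both of which vanish.
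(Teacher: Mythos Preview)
Your proposal is correct and follows essentially the same route as the paper: split $L$ and $G$ into their components, telescope via the linear decompositions \eqref{G1 decomp}--\eqref{L2 decomp}, and apply the $L^1_{x,v}$ estimates of Proposition \ref{prop:l1_estimate} to each summand. Your observation about the triple norm versus the fixed-time norm is well taken---the paper in fact writes the bounds directly with $\ld T^{-t}f^{i,n}(t)\rd$ rather than $\lt T^{-(\cdot)}f^{i,n}\rt$, implicitly relying on exactly the pointwise-in-$t$ refinement you describe (which is indeed immediate from Lemma \ref{lem:step1}).
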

\begin{proof} It is sufficient to show $L^1_{x,v}$ continuity for each component of the $L$ and $G$ terms. We consider the case of $L_1$ as the arguments for $L_2, G_1,$ and $G_2$ will follow identically. 
Using  the decomposition \eqref{L1 decomp} of $L_1$, the triangle inequality and the estimate \eqref{l1_L1}, we have
 \begin{align*}
    &\left\|T^{-t}L_1[f^{1,n},f^{2,n},f^{3,n},f^{4,n},f^{5,n},f^{6,n}](t) - T^{-t}L_1[f^1,f^2,f^3,f^4,f^5,f^6](t)\right\|_{L^1_{x,v}}\\
        & \lesssim  C\bigg[
        \left(\ld T^{-t}f^{5,n}\rd + \ld T^{-t}f^{6,n}\rd\right)\\
        & \hspace{1cm} \times\Big(
        \ld T^{-t}f^{1,n} - T^{-t}f^1\rd \ld T^{-t}f^{2,n}\rd \ld T^{-t}f^{3,n}\rd \ld T^{-t}f^{4,n}\rd \\
        &\hspace{1.5cm} + \ld T^{-t}f^{1}\rd \ld T^{-t}f^{2,n} - T^{-t}f^2\rd \ld T^{-t}f^{3,n}\rd \ld T^{-t}f^{4,n}\rd\\
        &\hspace{1.5cm} + \ld T^{-t}f^{1}\rd \ld T^{-t}f^{2}\rd \ld T^{-t}f^{3,n} - T^{-t}f^3\rd \ld T^{-t}f^{4,n}\rd\\
        &\hspace{1.5cm} + \ld T^{-t}f^{1}\rd \ld T^{-t}f^{2}\rd \ld T^{-t}f^{3}\rd \ld T^{-t}f^{4,n} - T^{-t}f^4\rd\Big)\\
        &\hspace{0.8cm} + \ld T^{-t}f^{1}\rd \ld T^{-t}f^{2}\rd \ld T^{-t}f^{3}\rd \ld T^{-t}f^{4}\rd\\
        &\hspace{2cm} \times \left(\ld T^{-t}f^{5,n} - T^{-t}f^5\rd + \ld T^{-t}f^{6,n} - T^{-t}f^6\rd\right)\bigg]\\
        & \lesssim C\bigg(\ld T^{-t}f^{1,n} - T^{-t}f^1\rd + \ld T^{-t}f^{2,n} - T^{-t}f^2\rd + \ld T^{-t}f^{3,n} - T^{-t}f^3\rd \\
        &\qquad  + \ld T^{-t}f^{4,n} - T^{-t}f^4\rd + \ld T^{-t}f^{5,n} - T^{-t}f^5\rd + \ld T^{-t}f^{6,n} - T^{-t}f^6\rd \bigg),
    \end{align*}
   
where $C$ is a positive constant depending on $\alpha, \beta, d$ and the maximum of the norms of all involved functions (which is finite thanks to the assumptions of the corollary).
    Since $T^{-t}f^{i,n} \overset{\m}{\longrightarrow} T^{-t}f^i$ for all $t \in \R$, each of these terms goes to zero as $n\to \infty$ and we obtain the desired convergence. 
\end{proof}
\subsection{A key mapping}\label{subsec:prep} 
We construct several mappings in the proofs of global well-posedness and existence of non-negative solutions (Section \ref{sec:GWP}) and scattering (Section \ref{sec:scattering}) that we wish to show are contractions. To streamline these arguments, we define the following  mapping $\Lambda_{t,a}$ with parameters $a,b \in \overline{\R} = \R \cup\{-\infty, \infty\}$:
 \begin{align}
    \Lambda_{a,b}[g] = \int_{a}^{b}T^{-s}\C[T^sg](s)~ds. \label{Lambda}
\end{align}
\begin{lemma}\label{lem:lambda}
    Let $d=1$, $\alpha,\beta > 0$, $t,a \in \overline{\R}$ and $\rho \leq \frac{\alpha^{1/8}}{2^{\frac{5}{2}}C_{1,\beta}^{1/4}}$, where $C_{1,\beta}$ is defined in \eqref{conv_constant}.  Then for $g,h \in \overline{B}(\rho) = \{f \in \mt: \lt f\rt \le \rho\}$, we have  
    \begin{align}
        \lt \Lambda_{t,a}[g]\rt &\leq \frac{\rho}{32},\label{lambda_bound}\\
        \lt \Lambda_{t,a}[g] - \Lambda_{t,a}[f]\rt &\leq \frac{1}{4}\lt g - h\rt, \label{lambda_diff_bound}
    \end{align} 
    where $ \lt \Lambda_{t,a}[g]\rt := \sup_{t\in\R}  \ld  \Lambda_{t,a}[g] \rd$.
\end{lemma}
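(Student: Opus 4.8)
The plan is to reduce both estimates \eqref{lambda_bound} and \eqref{lambda_diff_bound} to the a priori bounds already established in Proposition \ref{prop:gl_bounds}. First I would write $\C = G - L = G_1 + G_2 - L_1 - L_2$ via the decompositions \eqref{gain-loss split}, \eqref{G and L split}, and note that for $g \in \overline{B}(\rho)$ the function $T^s g$ has the property that $T^{-(\cdot)}(T^{(\cdot)}g) = g \in L^\infty(\R,\m)$, so each of $T^{-(\cdot)}(T^{(\cdot)}g)$ has triple norm at most $\rho$. Thus I can apply each of the four estimates in Proposition \ref{prop:gl_bounds} with all six slots filled by $g$ (so each $\lt T^{-(\cdot)}(T^{(\cdot)}g)\rt \le \rho$). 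Since $G_1, L_1$ each produce a factor $(\lt \cdot\rt + \lt\cdot\rt) \le 2\rho$ times four more factors of $\rho$, and $G_2, L_2$ produce five factors of $\rho$, summing gives
\begin{align*}
\lt \Lambda_{t,a}[g] \rt \le \big(4 C_{1,\beta}\alpha^{-1/2}\big)\big(2\rho^5 + 2\rho^5 + \rho^5 + \rho^5\big) = 24\, C_{1,\beta}\,\alpha^{-1/2}\,\rho^5.
\end{align*}
Using the hypothesis $\rho \le \alpha^{1/8}/(2^{5/2} C_{1,\beta}^{1/4})$, i.e. $\rho^4 \le \alpha^{1/2}/(2^{10} C_{1,\beta})$, we get $24\, C_{1,\beta}\alpha^{-1/2}\rho^5 \le 24 \rho / 2^{10} < \rho/32$, which is \eqref{lambda_bound}. (I should double-check the numerical constant; $24/1024 < 1/32 = 32/1024$ indeed holds, with room to spare.)

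For the difference estimate \eqref{lambda_diff_bound}, I would write $\Lambda_{t,a}[g] - \Lambda_{t,a}[h] = \int_a^t T^{-s}\big(\C[T^s g] - \C[T^s h]\big)(s)\,ds$ and expand $\C[T^sg] - \C[T^sh]$ using the multilinear telescoping decompositions \eqref{G1 decomp}, \eqref{G2 decomp}, \eqref{L1 decomp}, \eqref{L2 decomp}. Each of these writes the difference of a six-(or five-)linear operator as a sum of five terms, in each of which exactly one slot (or the paired slot $(g,h)$ / $(l,m)$) carries a difference $f^i - \widetilde{f}^i$ and the rest carry either $g$-type or $h$-type arguments, all of triple norm $\le \rho$. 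Applying Proposition \ref{prop:gl_bounds} termwise, each such term is bounded by $4 C_{1,\beta}\alpha^{-1/2}$ times $\rho^4$ (four remaining factors, noting that in the $G_1/L_1$ case the paired difference slot still only contributes one $\lt g-h\rt$ since the decomposition puts the difference in the $(g,h)$ pair as a single term $G_1[\widetilde f, g-\widetilde g, h - \widetilde h,\dots]$ — here I need to be a little careful, as that term has two difference factors; I would bound $\lt g - \widetilde g\rt, \lt h - \widetilde h \rt \le \lt g - h\rt$ appropriately... actually re-examining \eqref{G1 decomp}, the last term $G_1[\widetilde f, g - \widetilde g, h-\widetilde h,\widetilde k,\widetilde l,\widetilde m]$ is linear in the \emph{vector} $(g,h)$, so $G_1$ evaluated there is controlled by $\lt (g,h) - (\widetilde g,\widetilde h)\rt$, and the estimate \eqref{G1 estimate} gives a factor $\lt g - \widetilde g\rt + \lt h - \widetilde h\rt \le 2\lt g-h\rt$) times $\rho^4$. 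Counting all terms across the four operators (roughly $5+5+5+5 = 20$ terms, with a couple carrying an extra factor of $2$) yields a bound of the form $C' C_{1,\beta}\alpha^{-1/2}\rho^4 \lt g - h\rt$ with $C'$ an explicit modest constant, and then the hypothesis $\rho^4 \le \alpha^{1/2}/(2^{10}C_{1,\beta})$ forces the prefactor below $1/4$.

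The main obstacle is purely bookkeeping: getting the combinatorial constant from summing all the terms in the telescoping decompositions small enough to land under $1/32$ and $1/4$ respectively, and making sure the exponents of $\rho$ and the powers of $2$ in the hypothesis $\rho \le \alpha^{1/8}/(2^{5/2}C_{1,\beta}^{1/4})$ match up. There is no analytic difficulty beyond Proposition \ref{prop:gl_bounds}; one just has to be careful that (a) the arguments $T^{-(\cdot)}(T^{(\cdot)}g) = g$ genuinely lie in $\m$ with the right norm, which is immediate from $g \in \overline{B}(\rho)$, and (b) the ``$+$'' structure in $G_1,L_1$ contributes the factor $2$ and nothing worse. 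Finally, I would remark that the supremum over $t$ in $\lt\Lambda_{t,a}[g]\rt$ causes no trouble since, as noted in the remark after Proposition \ref{prop:gl_bounds}, the right-hand sides of those estimates are time-independent, so the same bounds survive taking $\sup_{t\in\R}$.
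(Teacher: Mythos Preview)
Your proposal is correct and follows essentially the same route as the paper's proof: split $\C = G_1 + G_2 - L_1 - L_2$, apply Proposition~\ref{prop:gl_bounds} with all slots equal to $g$ to get $24\,C_{1,\beta}\alpha^{-1/2}\rho^5 \le \rho/32$, and for the difference use the telescoping decompositions \eqref{G1 decomp}--\eqref{L2 decomp} together with Proposition~\ref{prop:gl_bounds} to land on a bound of the form $2^7 C_{1,\beta}\alpha^{-1/2}\rho^4 \lt g-h\rt \le \tfrac14 \lt g-h\rt$. The paper organizes the difference bound slightly more cleanly by grouping the five telescoped terms into the symmetric sum $\lt g\rt^4 + \lt g\rt^3\lt h\rt + \cdots + \lt h\rt^4 \le 5\rho^4$, which spares you the term-by-term counting you sketch, but the content is identical.
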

\begin{proof}
First, we show \eqref{lambda_bound}.
Let $g \in \overline{B}(\rho)$. By Proposition \ref{prop:gl_bounds}, 
\begin{align*}
    \lt \Lambda_{t,a}[g]\rt 
    & \leq \lt \int_{t}^{a} T^{-s}G_1[T^sg](s)~ds\rt + \lt \int_{t}^{a} T^{-s}G_2[T^sg](s)~ds\rt \\
    &\qquad+ \lt \int_{t}^{a} T^{-s}L_1[T^sg](s)~ds\rt + \lt \int_{t}^{a} T^{-s}L_2[T^sg](s)~ds\rt\\
    &\lesssim 6\cdot 4C_{1,\beta}\alpha^{-1/2}\lt g\rt^5
     \leq  2^{5}C_{1,\beta} \alpha^{-1/2}\rho^5
     \leq \frac{\rho}{32},
\end{align*}
where the last inequality is a consequence of the assumed upper bound on $\rho$.

Next we show the inequality \eqref{lambda_diff_bound}. Let $g,h \in \overline{B}(\rho)$. Then applying  decompositions \eqref{G1 decomp}- \eqref{L2 decomp} and Proposition \ref{prop:gl_bounds}, we have 
\begin{align*}
    & \lt \Lambda_{t,a}[g] - \Lambda_{t,a}[h]\rt\\ 
    & \leq \lt \int_{t}^{a} T^{-s}G_1[T^sg](s) - T^{-s}G_1[T^sh](s)~ds\rt 
    + \lt \int_{t}^{a} T^{-s}G_2[T^sg](s) - T^{-s}G_2[T^sh](s)~ds\rt\\
    & \quad + \lt \int_{t}^{a} T^{-s}L_1[T^sg](s) - T^{-s}\L_1[T^sh](s)~ds\rt 
    + \lt \int_{t}^{a} T^{-s}L_2[T^sg](s) - T^{-s}L_2[T^sh](s)~ds\rt\\
    &\leq 6\cdot 4 C_{1,\beta} \alpha^{-1/2}\lt g - h \rt  \bigg[\lt g\rt^4 + \lt g \rt^3 \cdot \lt h \rt + \lt g \rt^2 \cdot \lt h \rt^2 \\
    &\hspace{4.2cm} + \lt g \rt\cdot \lt h \rt^3 + \lt h \rt^4\bigg]\\
    &\leq 2^{7} C_{1,\beta}\alpha^{-1/2}\rho^4\lt g - h\rt\\
    &\leq \frac{1}{4}\lt g - h \rt.
\end{align*}
\end{proof}

\section{Global well-posedness}\label{sec:GWP}
In this section we prove well-posedness of the 6-wave kinetic equation \eqref{eq:6wave} as stated in Theorem \ref{thm:wke well posed} via the fixed point argument.

\begin{proof}[Proof of Theorem \ref{thm:wke well posed}]
Motivated by our definition of mild solution (Definition \ref{def:solution}), we define the mapping $\mathcal{T}:\mt \rightarrow \mt$ by 
\begin{align}
\mathcal{T}[g(t)] = f_0 + \Lambda_{0,t}[g],
\end{align}
where $\Lambda_{0,t}$ is given by \eqref{Lambda}. The mapping $\mathcal{T}:\mt \rightarrow \mt$ is well-defined by Proposition \ref{prop:gl_bounds}. 

Let $R_e$ be as in the statement of the theorem, and recall 
\begin{align}
    \overline{B}(2R_e) = \left\{g \in \mt: \lt g\rt \leq 2R_e\right\}.
\end{align} 
We claim that $\mathcal{T}: \overline{B}(2R_e)\rightarrow \overline{B}(2R_e)$ is a contraction. Namely, for $g \in \overline{B}(2R_e)$, Lemma \ref{lem:lambda} with $\rho = 2R_e$ implies
    \begin{align}
        \lt \mathcal{T}[g]\rt &\leq \ld f_0\rd + \lt \Lambda_{0,t}[g]\rt 
        \leq R_e + \frac{1}{16}R_e 
        \leq 2R_e.
    \end{align}  
Thus, $\mathcal{T}:\overline{B}(2R_e) \to \overline{B}(2R_e)$. 
Next, we show that $\mathcal{T}$ is a contraction on $\overline{B}(2R_e)$. For $g, h \in \overline{B}(2R_e)$, by Lemma \ref{lem:lambda} we have 
    \begin{align} 
        \lt \mathcal{T}[g] - \mathcal{T}[h]\rt 
        & = \lt \Lambda_{0,t}[g] - \Lambda_{0,t}[h]\rt
         \leq \frac{1}{4}\lt g - h \rt.\label{new}
    \end{align}
Thus, $\mathcal{T}:\overline{B}(2R_e) \rightarrow \overline{B}(2R_e)$ is a contraction. 
Then $\mathcal{T}$ has a unique fixed point, $g \in \overline{B}(2R_e)$, so
\begin{align}\label{eq:fixed-g}
    g(t) = \mathcal{T}[g(t)].
\end{align}
By setting
\begin{align}
f(t) = T^{t}g(t)
\end{align}
equation \eqref{eq:fixed-g} becomes
\begin{align}
    T^{-t} f(t) =  f_0 + \int_0^t T^{-s}\C[f](s)ds.
\end{align}
 Thanks to the fact that $g \in \overline{B}(2R_e)$, $f$ satisfies the bound 
\begin{align}
  \lt T^{-t}f(t)\rt \leq 2R_e,
\end{align}
which by Proposition \ref{prop:gl_bounds} shows that $f(t)$ is the unique  mild solution of (\ref{eq:6wave}) in $\overline{B}(2R_e)$.

Finally, we prove the stability estimate \eqref{stability}. Let $f_0,g_0$ be such that  $||f_0||_{\alpha, \beta}, ||g_0||_{\alpha, \beta} < R_e$, and let $f,g$ be mild solutions of \eqref{eq:6wave} corresponding to initial data $f_0,g_0$, respectively. Then we have 
\begin{align}
    T^{-t}f(t) - T^{-t}g(t) = f_0 - g_0 &+ \int_0^t \Big((T^{-s}\C[f](s) - T^{-s}\C[g](s)\Big)ds.\nonumber
\end{align}
Applying the estimate \eqref{lambda_diff_bound} yields
\begin{align}
     \lt T^{-t}f(t) - T^{-t}g(t)\rt &\leq \ld f_0 - g_0\rd + \frac{1}{4}\lt T^{-t}f(t) - T^{-t}g(t)\rt,
\end{align}
which in turn implies
\begin{align}
   \lt T^{-t}f(t) - T^{-t}g(t)\rt & \leq 2\ld f_0 - g_0\rd.
\end{align}
This concludes the proof of Theorem \ref{thm:wke well posed}. 
\end{proof}

Next, we apply additional constraints on $\alpha$ and $\beta$ to show the existence of nonnegative solutions. The argument is inspired by analysis on the Boltzmann equation done in \cite{baga16}.

\begin{proof}[Proof of Theorem \ref{thm:nonnegativity}]
Consider the same mapping as in the proof of Theorem \ref{thm:wke well posed}. Namely, let $\mathcal{T}:\mt \rightarrow \mt$ be given by 
\begin{align}
\mathcal{T}[g(t)] = f_0 + \int_0^t T^{-s}\C[T^s g](s)ds.
\end{align}

We will show that the mapping $\mathcal{T}$ is a contraction in a ball centered at the Maxwellian, $M$, i.e. in the following ball
\begin{align}
\overline{B}_M(2R_p): = \{g \in \mt: \lt g - M \rt \le 2R_p \}.
\end{align}

First we show that $\mathcal{T}: \overline{B}_M(2R_p)\rightarrow \overline{B}_M(2R_p)$. Let $g \in \overline{B}_M(2R_p)$. Then, we have
\begin{align*}
    \lt \mathcal{T}[g] - M \rt 
    & = \lt f_0 - M + \int_0^t T^{-s}\C[T^s g](s)ds \rt 
     \le R_p + \lt\int_0^t T^{-s}\C[T^s g](s)ds \rt.
\end{align*}

From the estimates given in Proposition \ref{prop:gl_bounds},  
\begin{align*}
    \lt \mathcal{T}[g] - M \rt  &\leq R_p + 6\cdot 4 C_{1,\beta}\alpha^{-1/2}\lt g\rt^5\\
    &\leq R_p + 24 C_{1,\beta} \alpha^{-1/2}\left(\lt g - M \rt + \lt M \rt\right)^5\\
    & \le R_p + 24 C_{1,\beta} \alpha^{-1/2} (2R_p +1)^5\\
    & \le 2R_p
\end{align*}
as long as
\begin{align}
24 C_{1,\beta} \alpha^{-1/2} (2R_p +1)^5 \le R_p. \label{first}
\end{align}

Next, we show that $\mathcal{T}$ is a contraction on $\overline{B}_M(2R_p)$. Let $g,h \in \overline{B}_M(2R_p)$. Using \eqref{G1 decomp} - \eqref{L2 decomp}, Proposition \ref{prop:gl_bounds} and adding and subtracting $M$, we have
\begin{align*}
    & \lt \mathcal{T}[g] - \mathcal{T}[h]\rt = \lt \int_0^t T^{-s}\C[T^sg](s) - T^{-s}\C[T^s h](s)ds \rt\\
    & \leq 6\cdot 4C_{1,\beta} \alpha^{-1/2}\lt g - h\rt \bigg[(\lt g - M \rt + 1)^4 + (\lt g - M \rt + 1)^3(\lt h - M\rt  + 1) \\
    &\qquad + (\lt g - M \rt + 1)^2(\lt h - M \rt  + 1)^2 + (\lt g - M \rt + 1)(\lt h - M \rt + 1)^3 \\
    &\qquad + (\lt h - M \rt + 1)^4\bigg]\\
     &\leq 2^{7} C_{1,\beta} \alpha^{-1/2}(2R_p + 1)^4\lt g - h \rt\\
     & \leq \frac{1}{2}\lt g - h\rt,
\end{align*}
where the last inequality holds provided that 
\begin{align}
2^{7} C_{1,\beta} \alpha^{-1/2} (2R_p +1)^4 < \frac{1}{2}.\label{second}
\end{align}

From the conditions of the theorem, we have that \begin{align}\label{condition}
    \frac{1}{6} \leq R_p \leq \frac{1}{2} \left( \frac{1}{\sqrt[4]{2^{8} C_{1,\beta} \alpha^{-1/2}}} - 1\right).
\end{align} The upper bound on $R_p$ tells us that \eqref{second} is satisfied. 
We note that the condition \eqref{condition 1} is necessary so that the upper and lower bounds on $R_p$ define a non-empty interval.

Next we prove \eqref{first}. From \eqref{second}, we have
\begin{align*}
    (2R_p + 1)^4 < \frac{\alpha^{1/2}}{2^{8}C_{1,\beta}}.
\end{align*}
Therefore, the left-hand side of \eqref{first} is bounded as follows:
\begin{align*}
    24 C_{1,\beta} \alpha^{-1/2} (2R_p +1)^5 
    & \le 2^{5} C_{1,\beta} \alpha^{-1/2}  \frac{\alpha^{1/2}}{2^{8}C_{1,\beta}} (2R_p +1)\\
    & \le \frac{1}{8} (2R_p +1) \le R_p
\end{align*}
since by the assumption \eqref{condition} we have $R_p \ge 1/6$.

Then, we have that $\mathcal{T}:\overline{B}_M(2R_p) \to \overline{B}_M(2R_p)$ is a contraction. So, by the contraction mapping principle, $\mathcal{T}$ has a unique fixed point, $g \in \overline{B}_M(2R_p)$:
\begin{align}\label{eq:fixed-gM}
    g(t) = \mathcal{T}[g(t)].
\end{align}
By setting 
\begin{align}
    f(t) = T^{t}g(t)
\end{align}
equation \eqref{eq:fixed-gM} becomes 
\begin{align}
    T^{-t}f(t) = f_0 + \int_0^t T^{-s}\C[f]ds.
\end{align}
Thanks to the fact that $g \in \overline{B}_M(2R_p)$, $f$ satisfies the bound 
\begin{align} \label{set}
  \lt T^{-t}f(t) - M \rt < 2R_p,   
\end{align} 
which implies $\lt T^{-t}f(t)\rt \le 2R_p +1$, so $f\in L^\infty(\R, \m)$. Together with 
Proposition \ref{prop:gl_bounds} we have that $f(t)$ is the unique  mild solution of (\ref{eq:6wave}) in $\overline{B}_M(2R_p)$.
 This proves statement (a).

If in addition $\frac{1}{2^{12}} \leq C_{1,\beta} \alpha^{-1/2}$, then thanks to \eqref{condition 2} we have $R_p \leq \frac{1}{2} \left( \frac{1}{2^2\sqrt[4]{ C_{1,\beta} \alpha^{-1/2}}} - 1\right) \leq \frac{1}{2}$.  
Then, due to \eqref{set}, for all $t \in \R$, we have
\begin{align*}
 0 \leq (1-2R_p)M(x,v) \leq T^{-t}f(t,x,v) \leq (1 + 2R_p)M(x,v),
\end{align*}
which concludes the proof of statement (b). 
\end{proof}

\section{Kaniel-Shinbrot iteration scheme}
\label{sec:KS}

In this section we prove Theorem \ref{thm:KSnonnegative}. More precisely, we adapt the Kaniel-Shinbrot iteration to prove the existence and uniqueness of a non-negative mild solution of the 6-wave kinetic equation \eqref{eq:6wave} without additional constraints on $\alpha$ and $\beta$.

\begin{remark}
    The Kaniel-Shinbrot iteration used in prior papers (e.g. \cite{ alga09, amga22}) typically denotes transport operator by \#, i.e.:
    \begin{align*}
        T^{-t}f(t,x,v) = f^\#(t,x,v) := f(t,x +tv,v).
    \end{align*}
\end{remark}

We recall the abbreviated notation \eqref{abbr} that will be used throughout this section:
\begin{align}
    T^{-(\cdot)}g := T^{-t} g(t,x,v).
\end{align}

\subsection{Iteration set up} Given initial data $f_0(x,v) \in \m^+$ for  \eqref{eq:6wave} and given a pair of functions  $T^{-t}l_0(t,x,v) , T^{-t}u_0(t,x,v) \in L^\infty(\R,\m^+)$ that will be chosen to satisfy certain condition (see Theorem \ref{thm:KS_WP}), we introduce a generalization of the  Kaniel-Shinbrot iteration, which for $n\in\N$ reads
\begin{align}
    \frac{dl_n}{dt} + v\cdot \nabla_x l_n  &= G[l_{n-1}] -   L[l_n, u_{n-1}, u_{n-1}, u_{n-1}, u_{n-1}, u_{n-1}],\nonumber\\
    l_n(0) &= f_0\label{ln},\\
    \frac{du_n}{dt} + v\cdot\nabla_x u_n &= G[u_{n-1}] - L[u_n, l_{n-1}, l_{n-1}, l_{n-1}, l_{n-1}, l_{n-1}],\nonumber\\
    u_n(0) & = f_0, \label{un}
\end{align}
where $G$ is defined in \eqref{def-g}, $L$ is defined in \eqref{def-l}, and we use abbreviated notation:
\begin{align}
    G[p] = G[p,p,p,p,p,p].
\end{align}

We will show that the above iterative scheme yields an increasing in $n$ sequence $\{l_n(t)\}_n$ and decreasing in $n $ sequence $\{u_n(t)\}_n$ that satisfy 
\begin{align}
    T^{-(\cdot)}l_0 \le T^{-(\cdot)}l_1 \le \dots \le  T^{-(\cdot)}l_n \le T^{-(\cdot)} u_n \le \dots \le T^{-(\cdot)}u_1 \le T^{-(\cdot)}u_0.
\end{align} 
We will show that these two sequences, $\{l_n\}_n$ and $\{u_n\}_n$, converge to the same limit $f$, which will be shown to solve the initial value problem for the 6-wave kinetic equation \eqref{eq:6wave}.  Finally, $f$ will be  non-negative because $l_0$ will be chosen to be zero (see Subsection \ref{sec:application of KS}).

\subsection{Associated Linear Problem}
In order to establish that the above iteration has a solution, we  consider the following more general associated linear problem. Roughly speaking, given functions $g$ and $h$, we want to show well-posedness of the linear problem 
\begin{align}\label{ALP}
    \begin{cases}
        \partial_t f  + v\cdot \nabla_x f= h - L[f,g,g,g,g,g], & (t,x,v) \in \R\times \R^d \times \R^d\\
        f(0) = f_0, & (x,v) \in \R^d \times \R^d,
    \end{cases}
\end{align}
where $L$ is defined  in \eqref{def-l}. More precisely, we define a mild solution of \eqref{ALP} as follows.

\begin{definition}\label{alp_soln}
    Let $d\ge 1, \alpha,\beta > 0$ and  let $L$ be as in \eqref{def-l}. Suppose $f_0 \in L^{1,+}_{x,v}$, $\T h \in L_{loc}^1(\R,L^{1,+}_{x,v})$, $\T g \in L^\infty(\R, \m^+)$. Then a non-negative measurable function $f$ with $T^{-(\cdot)}f \in C^0(\R, L^{1+}_{x,v})$ that satisfies
     \begin{align*}
            T^{-t}f(t,x,v)  = f_0(x,v) + \int_0^t T^{-\tau}h(\tau,x,v)d\tau -  \int_0^t T^{-\tau}L[f,g,g,g,g,g](\tau,x,v)d\tau,
        \end{align*}
is a mild solution to $\eqref{ALP}$.
\end{definition}

By minor modifications  of  the arguments in \cite[Proposition 4.5]{amga22} to accommodate the higher order of multi-linearity of 
$ L(f,g,g,g,g,g)$,
we have the following existence and uniqueness result for the associated linear problem \eqref{ALP}.
\begin{proposition}\label{prop:ALP}
    Let $d\ge 1, \alpha,\beta > 0$. Consider  initial data $f_0 \in L^{1,+}_{x,v}$, $\T h \in L^1_{loc}(\R,L^{1+}_{x,v})$ and $ \T g \in L^\infty(\R, \m^+).$  Then there exists a unique mild solution $f$ of \eqref{ALP}.
\end{proposition}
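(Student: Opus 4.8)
\textbf{Proof proposal for Proposition \ref{prop:ALP}.}

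The plan is to solve the associated linear problem \eqref{ALP} by a contraction mapping argument on the Duhamel formulation, working on a finite time interval first and then extending globally. Observe that since $\T g \in L^\infty(\R,\m^+)$, the function $R[g,g,g,g,g]$ from \eqref{def-R} acts as a (nonnegative, time- and phase-space-dependent) multiplier, and \eqref{ALP} is in fact \emph{linear} in $f$: it reads $\partial_t f + v\cdot\nabla_x f = h - f\,R[g,g,g,g,g]$. So after transporting, the equation is $\frac{d}{dt}(T^{-t}f) = T^{-t}h - (T^{-t}f)\,(T^{-t}R[g,g,g,g,g])$, which one could in principle integrate explicitly via an integrating factor $\exp\!\big(\int_0^t T^{-\tau}R[g,g,g,g,g]\,d\tau\big)$. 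However, to stay in the $L^1_{x,v}$ framework of Definition \ref{alp_soln} and to get nonnegativity cleanly, I would instead set up the fixed point map
\begin{align*}
    \Phi[f](t) := T^{t}\Big(f_0 + \int_0^t T^{-\tau}h(\tau)\,d\tau - \int_0^t T^{-\tau}L[f,g,g,g,g,g](\tau)\,d\tau\Big)
\end{align*}
on the space $X_\delta := \{ f : T^{-(\cdot)}f \in C^0([-\delta,\delta], L^{1}_{x,v}) \}$ with the sup-in-time $L^1_{x,v}$ norm, for $\delta$ small.

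The key steps, in order, are: (i) Show $\Phi$ maps $X_\delta$ into itself: the term $T^{-\tau}h$ is in $L^1_{loc}(\R,L^{1}_{x,v})$ by hypothesis, so its time-integral is continuous into $L^1_{x,v}$; for the loss term, the $L^1_{x,v}$ estimate \eqref{l1_L2}-type bound from Proposition \ref{prop:l1_estimate} (applied with the configuration $L[f,g,g,g,g,g]=fR[g,g,g,g,g]$, which is a sub-case of $L_1+L_2$) gives $\|T^{-\tau}L[f,g,g,g,g,g](\tau)\|_{L^1_{x,v}} \le C_{\alpha,\beta,d}\,\lt\T g\rt^5 \,\ld T^{-\tau}f(\tau)\rd_{L^1_{x,v}}$ — here I need to be slightly careful that Proposition \ref{prop:l1_estimate} as stated puts the $\m$-norm on $f$; for the linear-in-$f$ loss term one only needs the $L^1_{x,v}$ norm of $f$ since $g$ supplies all five exponential weights, so I would record the minor variant $\|T^{-\tau}(fR[g,g,g,g,g])\|_{L^1_{x,v}} \le C_{\alpha,\beta,d}\lt\T g\rt^5\|T^{-\tau}f(\tau)\|_{L^1_{x,v}}$, which follows from the same computation (bounding the $g$-factors by their Maxwellian weights and integrating out $v_1,\dots,v_5$ exactly as in the proof of Proposition \ref{prop:l1_estimate}). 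Continuity in $t$ of $t\mapsto \int_0^t T^{-\tau}L(\tau)d\tau$ into $L^1_{x,v}$ then follows since the integrand is in $L^\infty_{loc}(\R,L^1_{x,v})$. (ii) Show $\Phi$ is a contraction on $X_\delta$ for $\delta$ with $C_{\alpha,\beta,d}\lt\T g\rt^5\,\delta < 1$: by linearity in $f$, $\Phi[f]-\Phi[\tilde f] = -T^t\int_0^t T^{-\tau}\big((f-\tilde f)R[g,g,g,g,g]\big)(\tau)d\tau$, and the same $L^1_{x,v}$ bound gives $\sup_{|t|\le\delta}\|T^{-t}(\Phi[f]-\Phi[\tilde f])(t)\|_{L^1_{x,v}} \le C_{\alpha,\beta,d}\lt\T g\rt^5\,\delta\,\sup_{|t|\le\delta}\|T^{-t}(f-\tilde f)(t)\|_{L^1_{x,v}}$. (iii) Obtain the unique fixed point on $[-\delta,\delta]$; since $\delta$ depends only on $\lt\T g\rt$ and not on the size of the data, iterate to extend the solution to all of $\R$. (iv) Verify nonnegativity: because $f_0\ge 0$ and $h\ge 0$, and because the loss term is $f$ times the nonnegative multiplier $R[g,g,g,g,g]$, the Duhamel/integrating-factor representation $T^{-t}f(t) = e^{-\int_0^t T^{-\tau}R[g,\dots,g]d\tau}\big(f_0 + \int_0^t e^{\int_0^\sigma T^{-\tau}R d\tau}\,T^{-\sigma}h(\sigma)\,d\sigma\big)$ is manifestly nonnegative; alternatively one runs the standard monotone iteration $f^{(0)}=0$, $f^{(j+1)} = \Phi_0[f^{(j)}]$ where $\Phi_0$ uses only the gain/source part and treats the loss implicitly, which preserves nonnegativity at each step and converges to $f$. (v) Finally, read off that $T^{-(\cdot)}f \in C^0(\R,L^{1+}_{x,v})$ from the construction, completing the verification that $f$ is a mild solution in the sense of Definition \ref{alp_soln}; uniqueness is immediate from the contraction on each $[-\delta,\delta]$ together with a continuation/connectedness argument.

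The main obstacle I anticipate is bookkeeping the precise functional setting so that the a priori $L^1_{x,v}$ estimates of Proposition \ref{prop:l1_estimate} apply verbatim (or with only the cosmetic modification noted above): the proposition is stated with $\m$-norms on all six slots, whereas here five slots are $g$ (fine) and the sixth is $f$, for which we only control $L^1_{x,v}$. One must check that the proof of Proposition \ref{prop:l1_estimate} — which bounds each of the five "non-$v$" velocity integrals against Gaussian weights and leaves an $L^1_{x,v}$ integral in the remaining variable — indeed only uses the $\m$-bound on five of the slots and merely integrates the sixth, so that the estimate survives with $\ld T^{-t}f\rd$ replaced by $\|T^{-t}f(t)\|_{L^1_{x,v}}$. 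Everything else (local contraction, global continuation since the existence time is data-independent, nonnegativity via the integrating factor) is routine and parallels \cite[Proposition 4.5]{amga22}, the only genuine change being the higher multilinearity, which affects only the power of $\lt\T g\rt$ appearing in the constants.
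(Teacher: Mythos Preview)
Your outline is close in spirit to what the paper intends (it simply defers to \cite[Proposition~4.5]{amga22}, so there is no detailed argument to compare against beyond ``adapt the multilinear bookkeeping''), and for $d=1$ the contraction you set up does close. Two corrections, one cosmetic and one substantive.

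First, a miscount: each summand in $R[g,g,g,g,g]$ carries only \emph{four} factors of $g$ (see \eqref{def-R}), not five, so the constant in your variant estimate should be $\lt\T g\rt^4$.

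Second, and this is the real gap: the ``cosmetic modification'' of Proposition~\ref{prop:l1_estimate} you invoke is \emph{not} cosmetic for $d\ge 2$. In the proof of that proposition, the $\m$-bound on the $f$-slot is precisely what supplies the prefactor $e^{-\alpha|x|^2-\beta|v|^2}$ inside $\Gamma(s,x,v)$. If you drop it (because you only control $\|T^{-t}f\|_{L^1_{x,v}}$), what you need instead is the pointwise bound
\[
\sup_{x,v}\ \int_{\R^{2d}}(|v_1-v|+|v_2-v|)^{2d-2}\,e^{-\beta(|v_1|^2+|v_2|^2)}\,dv_1dv_2 \ <\ \infty,
\]
and by Lemma~\ref{lem:conv} this integral behaves like $1+|v|^{(2d-2)^+}$, which is bounded only when $d=1$. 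Hence for $d\ge 2$ the multiplier $T^{-t}R[g,\dots,g]$ is not in $L^\infty_{x,v}$, the inequality $\|T^{-t}(fR)\|_{L^1_{x,v}}\le C\,\|T^{-t}f\|_{L^1_{x,v}}$ fails, and your contraction does not close in $C^0([-\delta,\delta],L^1_{x,v})$.

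The standard remedy---and this is what the Kaniel--Shinbrot literature the paper cites actually does---is to promote the integrating-factor formula in your step~(iv) from a nonnegativity check to the \emph{construction} itself: define
\[
T^{-t}f(t)\ :=\ e^{-\int_0^t T^{-\tau}R[g,\dots,g]\,d\tau}f_0\ +\ \int_0^t e^{-\int_\sigma^t T^{-\tau}R\,d\tau}\,T^{-\sigma}h(\sigma)\,d\sigma .
\]
For $t\ge 0$ the exponential factors lie in $[0,1]$ regardless of the growth of $R$ in $v$, so one immediately gets $0\le T^{-t}f(t)\le f_0+\int_0^t T^{-\sigma}h\,d\sigma\in L^{1,+}_{x,v}$ in every dimension, with continuity by dominated convergence; uniqueness is a one-line consequence of linearity. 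This is why the paper can describe the passage from \cite{amga22} as merely ``accommodating the higher order of multilinearity'': only the power of $\lt\T g\rt$ changes. Your plan is correct for $d=1$; for the full $d\ge 1$ claim, swap the roles of steps (i)--(iii) and (iv).
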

Also, as in \cite[Corollary 4.6]{amga22} , we have the following comparison result.
\begin{corollary}[] \label{cor:ineq}
    Let 
    $d\ge 1, \alpha,\beta > 0$. Consider $f_{0,1},f_{0,2} \in L^{1,+}_{x,v}$, $\T h_1, \T h_2 \in L^1_{loc}(\R,L^{1,+}_{x,v})$ and $\T g_1, \T g_2 \in L^\infty(\R,\m^+)$  with 
    \[
    f_{0,1}\leq f_{0,2},\quad \T g_1 \geq \T g_2, \quad \T h_1 \leq \T h_2.
    \]
    Let $f_1,f_2$ be the corresponding unique solutions of \eqref{ALP} with $f_0 = f_{0,i}, g = g_i$ and $h = h_i$ for $i = 1,2$. Then $f_1 \leq f_2$. 
\end{corollary}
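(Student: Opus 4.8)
\textbf{Proof proposal for Corollary \ref{cor:ineq}.}

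The plan is to reduce the comparison to a monotonicity property of the explicit Duhamel iteration that underlies the existence proof in Proposition \ref{prop:ALP}. Recall that a mild solution of \eqref{ALP} satisfies
\begin{align*}
T^{-t}f(t) = f_0 + \int_0^t T^{-\tau} h(\tau)\, d\tau - \int_0^t T^{-\tau} \big(f\, R[g,g,g,g,g]\big)(\tau)\, d\tau,
\end{align*}
where $R$ is the nonnegative multilinear form from \eqref{def-R}. Following the proof of \cite[Proposition 4.5]{amga22}, one builds $f$ as the limit of the iteration $T^{-t}f^{(0)}(t) := f_0$ and, for $j \ge 0$,
\begin{align*}
\partial_t f^{(j+1)} + v\cdot\nabla_x f^{(j+1)} = h - f^{(j+1)} R[g,g,g,g,g],\qquad f^{(j+1)}(0) = f_0,
\end{align*}
which can be integrated explicitly along characteristics: writing $\nu_g(\tau,x,v) := T^{-\tau}R[g,g,g,g,g](\tau,x,v) \ge 0$, one has
\begin{align*}
T^{-t}f^{(j+1)}(t,x,v) = f_0(x,v)\, e^{-\int_0^t \nu_g} + \int_0^t e^{-\int_\tau^t \nu_g}\, T^{-\tau}h(\tau,x,v)\, d\tau.
\end{align*}
(The point is that along a characteristic line the loss term is linear in $f^{(j+1)}$, so the equation is a scalar linear ODE that is solved by an integrating factor; here $\nu_g$ depends on $g$ only, not on $f^{(j+1)}$, which is why this closed form is available.)

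\textbf{Main steps.} First I would record, for each $i=1,2$, the closed-form representation above with $g=g_i$, $h=h_i$, $f_0 = f_{0,i}$, and with $\nu_{g_i} := T^{-(\cdot)}R[g_i,g_i,g_i,g_i,g_i] \ge 0$. Second, since $R$ is built out of sums of products of nonnegative factors (see \eqref{def-R}) and is monotone in the sense of \eqref{GL_monotonic}, the hypothesis $T^{-(\cdot)}g_1 \ge T^{-(\cdot)}g_2 \ge 0$ gives $\nu_{g_1}(\tau,x,v) \ge \nu_{g_2}(\tau,x,v) \ge 0$ for a.e.\ $(\tau,x,v)$; here one uses that the change of variables defining $R$ (equivalently the representation Lemma \ref{lem:I}) preserves pointwise inequalities since it only reshuffles and integrates nonnegative integrands. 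Third, I would compare the two scalar expressions termwise: because $f_{0,1}\le f_{0,2}$ and $e^{-\int_0^t\nu_{g_1}} \le e^{-\int_0^t\nu_{g_2}}$ for $t\ge 0$ (both factors nonnegative), the first terms satisfy $f_{0,1}e^{-\int_0^t\nu_{g_1}} \le f_{0,2}e^{-\int_0^t\nu_{g_2}}$; similarly, using $T^{-\tau}h_1 \le T^{-\tau}h_2$ (both nonnegative) and $e^{-\int_\tau^t\nu_{g_1}} \le e^{-\int_\tau^t\nu_{g_2}}$ for $0\le\tau\le t$, the integrands are ordered, whence the integral terms are ordered. This yields $T^{-t}f_1(t) \le T^{-t}f_2(t)$ for $t \ge 0$, and the symmetric estimate (with the roles of the endpoints of the characteristic integral reversed) handles $t \le 0$; applying $T^{t}$, which preserves pointwise inequalities since it is just a measure-preserving shift in $x$, gives $f_1 \le f_2$.

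\textbf{Where the work concentrates.} The only genuinely substantive point is the passage from the iterates $f^{(j)}_i$ to the limits $f_i$: one must check that the monotonicity $f^{(j)}_1 \le f^{(j)}_2$, which by the argument above holds at every level $j$ provided it holds at level $j$ for the loss coefficients (and it does, since $\nu_{g_i}$ is independent of $j$), is preserved in the $C^0(\R,L^{1}_{x,v})$ limit established in Proposition \ref{prop:ALP}; this is immediate because $L^1$ convergence passes to a subsequence converging a.e., and a.e.\ limits preserve $\le$. Everything else — the explicit integrating-factor formula, the nonnegativity and monotonicity of $R$, the sign of the exponentials — is routine, so I expect no real obstacle beyond being careful that all the quantities involved are the genuine (nonnegative) mild solutions furnished by Proposition \ref{prop:ALP} rather than formal objects, and that the inequalities for $t<0$ are stated with the correct orientation of the time integrals. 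In fact, since the statement asserts this is proved "as in \cite[Corollary 4.6]{amga22}", the cleanest writeup simply cites that argument and notes that the higher-order multilinearity of $L$ changes nothing because $R[g,g,g,g,g]$ still enters linearly in $f$ and monotonically in $g$.
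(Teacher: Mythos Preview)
Your approach via the explicit integrating-factor representation is the natural one and is exactly what underlies the paper's citation of \cite[Corollary 4.6]{amga22}. One minor point: the iteration $\{f^{(j)}\}$ you introduce is vacuous, since your recursion for $f^{(j+1)}$ involves only $f^{(j+1)}$ and the fixed data $f_0,h,g$, not $f^{(j)}$. The closed formula you write down already \emph{is} the unique mild solution, so the ``passage to the limit'' paragraph is unnecessary and the termwise comparison of the two explicit formulas finishes the argument directly for $t\ge 0$.

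The substantive gap is your handling of $t<0$. The situation is not symmetric under reversal of the time endpoints: for $t<0$ the same formula reads
\[
T^{-t}f(t) \;=\; f_0\, e^{\int_t^0 \nu_g} \;-\; \int_t^0 e^{\int_t^\tau \nu_g}\, T^{-\tau}h(\tau)\,d\tau,
\]
and now a larger $\nu_g$ produces \emph{larger} exponential weights. Hence from $\nu_{g_1}\ge\nu_{g_2}$ and $f_{0,1}\le f_{0,2}$ you cannot conclude termwise that $f_{0,1}\,e^{\int_t^0\nu_{g_1}}\le f_{0,2}\,e^{\int_t^0\nu_{g_2}}$; indeed, taking $f_{0,1}=f_{0,2}>0$, $h_1=h_2=0$, and $g_1>g_2$ yields $f_1(t)>f_2(t)$ for every $t<0$, so the termwise comparison genuinely breaks down in that direction. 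The argument in \cite{amga22} is written for forward times; either the corollary here is tacitly intended on $\{t\ge 0\}$, or a separate argument for negative times is needed. In either case your one-line ``symmetric estimate'' does not cover it, and you should flag the issue rather than dismiss it.
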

\subsection{Kaniel-Shinbrot Theorem} 
We now prove convergence of the Kaniel-Shinbrot iteration \eqref{ln}-\eqref{un} to the unique solution of the Cauchy problem \eqref{eq:6wave}. 

Let  
$\alpha,\beta > 0$. Consider 
\begin{itemize}
    \item initial datum $f_0(x,v) \in \m^+$ and
    \item a pair of functions  $T^{-(\cdot)}l_0, T^{-(\cdot)}u_0 \in L^\infty(\R,\m^+)$.
\end{itemize} Then by the a priori estimates in Proposition \ref{prop:l1_estimate}, we conclude that 
\begin{align} 
\T G[l_0],\,  \T G[u_0] \in L^1_{loc}(\R, L^{1+}_{x,v}).
\end{align}
We can now apply Proposition \ref{prop:ALP}  with $g$ as either $l_0$ or $u_0$ and $h$ as either $G[l_0]$ or $G[u_0]$ to find unique functions $l_1$ and $u_1$, such that $l_1$ is the mild solution of 
\begin{align}\label{l1}
\begin{cases}\frac{dl_1}{dt} + v\cdot\nabla_x l_1 = G[l_0] - L[l_1,u_0,u_0,u_0,u_0,u_0]\\
l_1(0) = f_0\end{cases}
\end{align}
and $u_1$ is the mild solution of 
\begin{align}\label{u1}
\begin{cases}
    \frac{du_1}{dt} + v\cdot\nabla_x u_1 = G[u_0] - L[u_1,l_0,l_0,l_0,l_0,l_0]\\
    u_1(0) = f_0.
\end{cases}\end{align}
Under certain conditions on functions $l_0, u_0, l_1, u_1$,  the recursive application of this process results in the following theorem.
\begin{theorem}\label{thm:KS_WP}
     Let $d=1, \alpha,\beta > 0,$ and $C_{1,\beta}$ be defined  in \eqref{conv_constant}.  Consider functions  $f_0 \in \m^+$, $T^{-(\cdot)}l_0, T^{-(\cdot)}u_0 \in L^\infty(\R,\m^+)$  with  $||\T u_0||_{\alpha,\beta} < \frac{\alpha^{1/8}}{\sqrt[4]{240C_{1,\beta}}}$. 
    Let $l_1,u_1$ be the mild solutions of \eqref{l1} and $\eqref{u1}$ respectively and assume the beginning condition 
    \begin{align}\label{beginning}
    0 \leq  T^{-(\cdot)}l_0 \leq T^{-(\cdot)}l_1 \leq T^{-(\cdot)} u_1 \leq  T^{-(\cdot)}u_0
    \end{align}
    holds.
    Then we have: 
    \begin{enumerate}[label = (\roman*)]
        \item There are unique sequences $\{l_n\}_n$ and $\{u_n\}_n$ such that for any $n \in \N$, $l_n,u_n$ are the mild solutions to \eqref{ln} and \eqref{un}, respectively. Furthermore, for any $n \in \N$, we have
        \begin{align}\label{monotonicity}
            0 \leq  T^{-(\cdot)}l_0
            \leq T^{-(\cdot)} l_1
            \leq \dots 
            \leq T^{-(\cdot)} l_n
            \leq T^{-(\cdot)} u_n 
            \leq \dots 
            \leq T^{-(\cdot)} u_1
            \leq  T^{-(\cdot)}u_0.
        \end{align}
        \item For all $t \in \R$, the sequences $\{T^{-t} l_n(t)\}_n$ and $\{T^{-t} u_n(t)\}_n$ converge in $\m^+$ and we denote their limits as follows:
        \begin{align}\label{limits}
            T^{-t}l(t): = \lim\limits_{n\to \infty}T^{-t}l_n(t),\quad T^{-t}u(t): = \lim\limits_{n\to \infty}T^{-t}u_n(t).
        \end{align}
        Furthermore, 
        \[
        T^{-(\cdot)} l, ~T^{-(\cdot)} u \in L^\infty(\R, \m^+) \cap C^0(\R, L^{1+}_{x,v}),
        \]
        and the following integral equations are satisfied, 
        \begin{align}
            T^{-t}l(t) + \int_0^t T^{-\tau} L[l,u,u,u,u,u](\tau)d\tau  = f_0 + \int_0^t T^{-\tau}G[l](\tau)d\tau,\quad \forall~t \in \R \label{l},\\
            T^{-t}u(t) + \int_0^t T^{-\tau} L[u,l,l,l,l,l](\tau)d\tau  = f_0 + \int_0^t T^{-\tau}G[u](\tau)d\tau,\quad \forall~t \in \R.\label{u}
        \end{align}
        \item The limit functions $l,u$ in \eqref{limits} are the same, i.e. $u = l$.
        \item Define $f: = l( = u)$. Then $f$ is the unique mild solution of the 6-wave kinetic equation \eqref{eq:6wave} in the sense of Definition \ref{def:solution} corresponding to initial data $f_0 \in \m^+$ satisfying
        \begin{align}
        \lt \T f\rt \leq \lt \T u_0\rt.
        \end{align}
    \end{enumerate}
\end{theorem}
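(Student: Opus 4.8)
\textbf{Proof plan for Theorem \ref{thm:KS_WP}.}

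The plan is to carry out the classical Kaniel--Shinbrot argument, adapted to the six-linear structure of $G$ and $L$, in the four steps (i)--(iv) laid out in the statement. For step (i), I would argue by induction on $n$. Given $l_{n-1}, u_{n-1}$ with the prescribed ordering, I apply the associated linear problem (Proposition \ref{prop:ALP}) with $g = u_{n-1}$, $h = G[l_{n-1}]$ to obtain $l_n$, and with $g = l_{n-1}$, $h = G[u_{n-1}]$ to obtain $u_n$; the hypotheses of that proposition are met because Proposition \ref{prop:l1_estimate} guarantees $\T G[l_{n-1}], \T G[u_{n-1}] \in L^1_{loc}(\R, L^{1+}_{x,v})$ and $\T l_{n-1}, \T u_{n-1} \in L^\infty(\R,\m^+)$. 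The chain of inequalities \eqref{monotonicity} then follows from the comparison principle (Corollary \ref{cor:ineq}): to compare $l_{n-1}$ and $l_n$ one checks that the sources satisfy $G[l_{n-2}] \le G[l_{n-1}]$ and the loss multipliers satisfy $u_{n-2} \ge u_{n-1}$ (using the induction hypothesis together with the monotonicity \eqref{GL_monotonic}); to compare $l_n$ and $u_n$ one uses $G[l_{n-1}] \le G[u_{n-1}]$ and that the loss term for $l_n$ is built from $u_{n-1} \ge l_{n-1}$, the loss term for $u_n$. The base case is exactly the assumed beginning condition \eqref{beginning}.

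For step (ii), monotonicity and the uniform bound $T^{-t}u_n(t) \le T^{-t}u_0(t) \in \m^+$ give, for each fixed $(t,x,v)$, monotone pointwise convergence of $T^{-t}l_n(t,x,v)$ and $T^{-t}u_n(t,x,v)$ to limits $T^{-t}l(t,x,v)$ and $T^{-t}u(t,x,v)$; the common Maxwellian bound is preserved in the limit, so $T^{-(\cdot)}l, T^{-(\cdot)}u \in L^\infty(\R,\m^+)$. To pass to the limit in the integral equations \eqref{ln}--\eqref{un} (rewritten in mild form), I would invoke Corollary \ref{cor:l1_conv} for the $L^1_{x,v}$-convergence of $T^{-\tau}G[l_n](\tau)$ and $T^{-\tau}L[l_n,u_{n-1},\dots](\tau)$ at each $\tau$, together with the $L^1$-in-time domination from Proposition \ref{prop:l1_estimate} (the constant bound is uniform along the sequence by \eqref{monotonicity}), so that dominated convergence yields \eqref{l} and \eqref{u}; the $C^0(\R,L^{1+}_{x,v})$ regularity of $l,u$ comes from the continuity of the right-hand sides of these integral equations in $t$.

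For step (iii), set $w := u - l \ge 0$. Subtracting \eqref{u} from \eqref{l} and using the multilinear decompositions \eqref{G1 decomp}--\eqref{L2 decomp} of $G[u]-G[l]$ and $L[u,l,l,l,l,l] - L[l,u,u,u,u,u]$, every resulting term is $G$ or $L$ applied to arguments among $\{l,u\}$ with exactly one slot equal to $w$; applying Proposition \ref{prop:gl_bounds} (noting all of $\T l, \T u$ have triple norm $\le \lt \T u_0\rt$) gives
\begin{align*}
\lt \T w\rt \le K\, C_{1,\beta}\,\alpha^{-1/2}\, \lt \T u_0\rt^4 \, \lt \T w\rt
\end{align*}
for an absolute constant $K$ (coming from counting the decomposition terms), and the hypothesis $\lt \T u_0\rt < \alpha^{1/8}/\sqrt[4]{240 C_{1,\beta}}$ is precisely what forces the prefactor below $1$, hence $\lt \T w\rt = 0$ and $u = l =: f$. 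For step (iv), with $u=l=f$ the two equations \eqref{l} and \eqref{u} collapse to the single identity $T^{-t}f(t) = f_0 + \int_0^t T^{-\tau}(G[f]-L[f,f,f,f,f,f])(\tau)\,d\tau = f_0 + \int_0^t T^{-\tau}\C[f](\tau)\,d\tau$, so $f$ is a mild solution in the sense of Definition \ref{def:solution}, non-negative as a monotone limit of the non-negative $u_n$, and $\lt \T f\rt \le \lt \T u_0\rt$ by \eqref{monotonicity}; uniqueness follows by the same contraction estimate as in step (iii) applied to the difference of any two mild solutions with triple norm $\le \lt\T u_0\rt$. The main obstacle I anticipate is step (ii) --- justifying the limit passage carefully, i.e. verifying that the hypotheses of Corollary \ref{cor:l1_conv} and the $L^1_t$-domination genuinely apply along the iteration (in particular that the ``shifted'' arguments $u_{n-1}, l_{n-1}$ in the loss terms do not spoil uniform control), and checking that uniqueness in Proposition \ref{prop:ALP} is compatible with the comparison step so that the sequences are genuinely well-defined and monotone.
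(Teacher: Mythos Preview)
Your proposal is correct and follows essentially the same Kaniel--Shinbrot strategy as the paper: induction via the comparison principle (Corollary \ref{cor:ineq}) for (i), monotone limits plus dominated convergence through Corollary \ref{cor:l1_conv} for (ii), and the multilinear decompositions \eqref{G1 decomp}--\eqref{L2 decomp} together with Proposition \ref{prop:gl_bounds} for the contraction in (iii)--(iv), with the explicit constant $K=120$ so that the smallness assumption on $\lt \T u_0\rt$ yields a factor $<\tfrac12$. The only minor deviation is that for the time-domination in step (ii) the paper invokes Proposition \ref{prop:DCT} (giving $\|T^{-\tau}G[u_0]\|_{L^1_{x,v}}, \|T^{-\tau}L[u_0]\|_{L^1_{x,v}} \in L^1(\R)$) rather than Proposition \ref{prop:l1_estimate}, though your $L^\infty_t$ bound also suffices since the integration is over the bounded interval $[0,t]$.
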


\begin{proof} We start by proving point (i).
    \begin{enumerate}[label = (\roman*)]
        \item We proceed by induction. 
        Case $n = 1$ follows by assumptions of the theorem.
        Now assume we have constructed $l_1,\dots,l_{n-1},u_1,\dots, u_{n-1}$ satisfying \eqref{ln}-\eqref{un} and \eqref{monotonicity}. 
            Then by  Proposition \ref{prop:ALP} there are unique mild solutions $l_n,u_n$  of \eqref{ln}-\eqref{un}. Thus, it  suffices to show that 
            \begin{align}\label{want}
            \T l_{n-1} \leq \T l_n \leq \T u_n \leq \T u_{n-1}.  
            \end{align}
           The inductive assumption implies  $ l_{n-2} \leq  l_{n-1} \leq  u_{n-1} \leq u_{n-2}$, so by  the monotonicity of $G$ as stated in \eqref{GL_monotonic} we have
            \begin{align*}
                \T G[l_{n-2}] 
                \leq \T G[l_{n-1}] 
                \leq \T G[u_{n-1}] 
                \leq \T G[u_{n-2}].
            \end{align*}
           Now we apply  Corollary \ref{cor:ineq} three times:
           \begin{itemize}
            \item Taking $ g_1 =  u_{n-2},  h_1 =  G[l_{n-2}],  g_2 =  u_{n-1}, h_2 =  G[l_{n-1}]$ implies $\T l_{n-1} \leq \T l_n$.
            \item Taking $g_1 = u_{n-1},    h_1 =  G[l_{n-1}], g_2 =  l_{n-1}, h_2 =  G[u_{n-1}]$ implies $\T l_{n} \leq  \T u_{n}$.
            \item Taking $g_1 =  l_{n-1},  h_1 =  G[u_{n-1}],  g_2 =  l_{n-2},   h_2 =  G[u_{n-2}]$ implies $ \T u_{n} \leq \T u_{n-1}$.
           \end{itemize}
Thus, the inequality  \eqref{want} is satisfied and the claim (i) follows.
      
        \item Thanks to \eqref{monotonicity}, the sequence $\{\T l_n\}_n$ is increasing and upper bounded and the sequence $\{\T u_n\}_n$ is decreasing and lower bounded, so they are convergent. Define 
        \[
        T^{-t} l(t): = \lim\limits_{n\to \infty} T^{-t} l_n(t),\qquad T^{-t} u(t): = \lim\limits_{n\to \infty} T^{-t} u_n(t), \qquad ~\forall t\in \R.
        \]
         Since $ \T u_0 \in L^\infty(\R, \m^+)$,  the inequality \eqref{monotonicity} implies that $\T l,~ \T u \in L^\infty(\R,\m^+)$ and that the above  convergence is in $\m$, i.e.
         \begin{align*}
        &(T^{-t} l_n(t), T^{-t} u_{n-1})(t) \overset{\m}{\longrightarrow} (T^{-t} l(t), T^{-t} u(t)), \quad \forall t\in\R, \\
        & (T^{-t} u_n(t), T^{-t} l_{n-1})(t) \overset{\m}{\longrightarrow}(T^{-t} u(t),T^{-t} l(t)),  \quad \forall t\in\R.
        \end{align*} 
        Thus, Corollary \ref{cor:l1_conv} implies that for any $t \in \R$,  
        \begin{align}
            T^{-t}L[l_n,u_{n-1}, u_{n-1},u_{n-1},u_{n-1},u_{n-1}](t) &\overset{L^1_{x,v}}{\longrightarrow}T^{-t}L[l,u,u,u,u,u](t),\label{conv1}\\
            T^{-t}L[u_n,l_{n-1},l_{n-1},l_{n-1},l_{n-1},l_{n-1}](t) &\overset{L^1_{x,v}}{\longrightarrow}T^{-t}L[u,l,l,l,l,l](t), \label{conv2}\\
            T^{-t}G[l_{n-1}](t) &\overset{L^1_{x,v}}{\longrightarrow} T^{-t}G[l](t), \label{conv3}\\ 
            T^{-t}G[u_{n-1}](t) &\overset{L^1_{x,v}}{\longrightarrow} T^{-t}G[u](t).\label{conv4}
        \end{align}   
       Since $l_n$ is a mild solution of \eqref{ln} and $u_n$ is a mild solution of \eqref{un}, we have for all $t\in\R$:
        \begin{align*}
           &\qquad T^{-t}l_n(t) + \int_0^t T^{-\tau}L[l_n,u_{n-1},u_{n-1},u_{n-1},u_{n-1},u_{n-1}](\tau)d\tau = f_0 + \int_0^t T^{-\tau}G[l_{n-1}](\tau)d\tau,\\
           & \qquad T^{-t}u_n(t) + \int_0^t T^{-\tau}L[u_n,l_{n-1},l_{n-1},l_{n-1}, l_{n-1}, l_{n-1}](\tau)d\tau = f_0 + \int_0^t T^{-\tau}G[u_{n-1}](\tau)d\tau.
        \end{align*}
        Then, by monotonicity of the operators $L$ and $G$, we have  $G[l_{n-1}],G[u_{n-1}] \leq G[u_0]$,   $L[l_n,u_{n-1},u_{n-1},u_{n-1}, u_{n-1}, u_{n-1}] \leq L[u_0]$, and $ L[u_n, l_{n-1},l_{n-1},l_{n-1},l_{n-1},l_{n-1}]\leq L[u_0]$, where $L[u_0] = L[u_0, u_0, u_0, u_0, u_0, u_0]$.
        Additionally, by Proposition \ref{prop:DCT} we have that
         $||T^{-t}L[u_0]||_{L^1_{x,v}}, ||T^{-t}G[u_0]||_{L^1_{x,v}} \in L^1_t$, so
        we may apply the dominated convergence theorem to conclude that by letting $n \to \infty$ and using convergences \eqref{conv1}-\eqref{conv4},  $\T l$ and $\T u$ satisfy \eqref{l} and \eqref{u}. Finally, thanks to \eqref{l} and \eqref{u} we have that  $T^{-(\cdot)} l, ~T^{-(\cdot)} u \in  C^0(\R, L^{1+}_{x,v}).$

        \item Since by \eqref{monotonicity} we have $\T l_0 \le \T l_n \leq \T u_n \le \T u_0$, by letting $n \to \infty$, we have $0 \leq \T l_0 \leq \T l \leq \T u \leq \T u_0$. 
        Combining this  with  \eqref{l}- \eqref{u} and \eqref{G1 decomp}-\eqref{L2 decomp}, we get
        \begin{align*}
            |T^{-t}u(t) - T^{-t}l(t)| 
            & \leq \int_0^t|T^{-\tau}G_1[u](\tau) - T^{-\tau}G_1[l](\tau)| + |T^{-\tau}G_2[u](\tau) - T^{-\tau}G_2[l](\tau)|\\
            &\qquad + |T^{-\tau}L_1[l,u,u,u,u,u](\tau) - T^{-\tau}L_1[u,l,l,l,l,l](\tau)| \\
            &\qquad + |T^{-\tau}L_1[l,u,u,u,u,u](\tau) - T^{-\tau}L_2[u,l,l,l,l,l](\tau)|d\tau\\
            &\leq 120\,C_{1,\beta} \alpha^{-1/2}\ld\T u_0\rd^4\lt \T u - \T l\rt e^{-\alpha|x|^2 - \beta|v|^2}
        \end{align*}
         where in the last inequality we use Proposition \ref{prop:gl_bounds}.
   Since by assumption, $\ld\T u_0\rd < \frac{\alpha^{1/8}}{\sqrt[4]{240 C_{1,\beta}}}$, we have $\lt \T u - \T l\rt  < \frac{1}{2}\lt \T u - \T l\rt$, which implies $u = l$.
    \item Existence follows by defining $f:= l = u \in C(\R, \m^+)\cap L^\infty(\R, \m^+)$. The proof of uniqueness follows from a very similar argument as in part (iii) with $|||T^{-(\cdot)}f|||_{\alpha,\beta} \leq |||T^{-(\cdot)}u_0|||_{\alpha,\beta}$ to get a contraction. 
    \end{enumerate}
\end{proof}

\subsubsection{Proof of Theorem \ref{thm:KSnonnegative}} \label{sec:application of KS}
    Let $f_0 \in \m^+$ with $||f_0||_{\alpha,\beta} < \frac{19}{20}\left(\frac{\alpha^{1/8}}{\sqrt[4]{240 C_{1,\beta}}}\right)$. Define 
    
    \begin{align}
    &T^{-t}l_0(t,x,v) = 0, \\
    &T^{-t} u_0(t,x,v) = C_0e^{-\alpha|x|^2 - \beta|v|^2},
    \end{align}
    with 
    \begin{align}
        C_0 = \frac{20}{19}||f_0||_{\alpha,\beta}.
    \end{align}
     Since $\ld \T u_0\rd = C_0 < \frac{\alpha^{1/8}}{(240 C_{1,\beta})^{1/4}}$, in order to use   Theorem \ref{thm:KS_WP}, it is sufficient to show that the beginning condition \eqref{beginning} is satisfied. 
Thanks to the fact that $\T l_0 =0$, the equations for $l_1$ and $u_1$ in the iteration scheme \eqref{l1}-\eqref{u1} simplify to:
    \begin{align}
        &\begin{cases}
           \displaystyle \frac{d}{dt} T^{-t}l_1(t) + T^{-t}l_1(t)T^{-t}R[u_0](t) = 0\\
            l_1(0) = f_0
        \end{cases}\\
        &\begin{cases}
            \displaystyle \frac{d}{dt} T^{-t}u_1(t) = T^{-t}G[u_0](t)\\
            u_1(0) = f_0,
        \end{cases}
    \end{align}
where $R$ is defined in \eqref{def-R}.
    By integrating these equations, we have 
    \begin{align}
        T^{-t}l_1(t) &= f_0\exp\left(-\int_0^t T^{-\tau}R[u_0](\tau)d\tau\right),\quad t \in \R,\\
        T^{-t}u_1(t) &= f_0 + \int_0^t T^{-\tau}G[u_0](\tau)d\tau, \quad t \in \R. \label{Tu_1}
    \end{align}
    Since $f_0, u_0 \geq 0$ and $\T l_0=0$, these representations imply that 
    \begin{align*}
       0= \T l_0 \leq \T l_1(t)  \leq \T u_1. 
    \end{align*}
    Thus, it remains to show that $\T u_1 \leq \T u_0$. We proceed by using \eqref{Tu_1} and Proposition \ref{prop:gl_bounds}:
    \begin{align*}
        T^{-t}u_1(t) &\leq \ld f_0\rd e^{-\alpha|x|^2 - \beta|v|^2} + 12\,C_{1,\beta}\alpha^{-1/2}\lt \T u_0\rt^5e^{-\alpha|x|^2 - \beta|v|^2}\\
        &\leq \left(\ld f_0\rd + \frac{C_0}{20}\right)e^{-\alpha|x|^2 - \beta|v|^2}.
    \end{align*}
Since $C_0$ satisfies
\begin{align*}
    \left(\ld f_0\rd + \frac{C_0}{20}\right) = C_0,
\end{align*}
we have \[T^{-t} u_1(t,x,v) \leq C_0e^{-\alpha|x|^2 - \beta|v|^2} = T^{-t} u_0(t,x,v).\]
Thus, conditions of Theorem \ref{thm:KS_WP} are satisfied, and then by Theorem \ref{thm:KS_WP} (iv), we obtain existence of a unique mild solution of \eqref{eq:6wave}.
$\square$

\section{Scattering}\label{sec:scattering}

In this section we prove results on the long time behavior of solutions to the 6-wave kinetic equation \eqref{eq:6wave} as stated in Theorem \ref{thm: scattering 1}, Theorem \ref{thm:scattering 2} and Theorem \ref{thm: main scattering}. Such results belong to what is often referred to as scattering theory.

Before we present the proof of Theorem \ref{thm: scattering 1}, we provide heuristic motivation for choosing the map for which we will seek a fixed point. By Duhamel's formula, the mild solution (when it exists) to the 6-wave kinetic equation \eqref{eq:6wave} corresponding to the initial data $f_0$ is given by 
    \begin{equation}\label{def-kwe mild solution equation}
        T^{-t} f(t)= f_0+\int_0^t T^{-s}\C[f](s)\,ds,\quad t \in \R.
    \end{equation}
   Suppose  $f_+ \in B(R_s)$ (see \eqref{B(R)}) is such that $\lim\limits_{t\rightarrow \infty}\ld T^{-t}f(t) - f_{+}\rd = 0$. Then, by letting $t \to \infty$ in $\eqref{def-kwe mild solution equation}$, we would formally obtain
    \begin{align}\label{if limit}
        f_+ = f_0 + \int_0^\infty T^{-s} \C[f](s)~ds
    \end{align}
    in the norm $\|\cdot \|_{\alpha,\beta}$.
    Hence, subtracting \eqref{if limit} from \eqref{def-kwe mild solution equation} yields 
    \begin{align}
        T^{-t}f(t) &= f_+ - \int_t^\infty T^{-s}\C[f](s)~ds.
    \end{align}
    If we denote $T^{-\tau}f(\tau)$ by $g(\tau)$, the above line becomes 
    \begin{align}
        g(t) &= f_+ - \int_t^\infty T^{-s}\C[T^sg](s)~ds.
    \end{align}
This motivates us to  define the map $\mathcal{A}: L^\infty(\R, \m) \to L^\infty(\R, \m)$ as follows
    \begin{align}\label{wke A}
        \mathcal{A}(g)(t) = f_+ - \Lambda_{t,\infty}[g],
    \end{align}
where $\Lambda_{t,\infty}$ is given by \eqref{Lambda}. We note that Proposition \ref{prop:gl_bounds} guarantees that the mapping $\mathcal{A}$ is well defined.

In order to prove Theorem \ref{thm: scattering 1}, for a given $f_+$ we will find $f_0$ by looking for a fixed point $g$ of the mapping $\mathcal{A}$ in an appropriate space. Then, we will be able to choose $f_0: = g(0)$. The second part of the statement pertaining to $f_-$ will be proven similarly with minor modifications that will be addressed in the proof below.

\begin{proof}[Proof of Theorem \ref{thm: scattering 1}]
Let $f_+ \in B(R_s). $
     From the definition \eqref{wke A} of the mapping $\mathcal{A}$, and by Lemma \ref{lem:lambda} with $\rho = 2R_{s}$, we have that for any $g \in \overline{B}(2R_{s})$
    \begin{align*}
        \lt  \A(g) \rti 
        & \le \ld f_+ \rd + \lt \Lambda_{t,\infty}[g] \rti 
        \leq R_{s} + \frac{1}{16}R_{s} \leq 2R_{s}.
    \end{align*} 
 Thus, $\A: \overline{B}(2R_{s}) \to \overline{B}(2R_{s})$.
    
    Now we show that $\mathcal{A}$ is a contraction on $\overline{B}(2R_{s})$. For any $f,g \in \overline{B}(2R_{s})$, by Lemma \ref{lem:lambda} we have
    \begin{align*}
        \lt  \A(f) - \A(g) \rti 
        &\le \lt  \Lambda_{t,\infty}[f] - \Lambda_{t,\infty}[g]  \rt 
         \le \frac{1}{4}\lt f - g\rt.
    \end{align*}
    Thus, $\mathcal{A}$ is a contraction  from $\overline{B}(2R_{s})$ to $\overline{B}(2R_{s})$ and we have a unique fixed point $g \in \overline{B}(2R_{s})$, i.e.
    \begin{align}\label{fixed point}
        g(t) = f_+ - \int_t^\infty T^{-s}\C[T^sg](s)~ds.
    \end{align}
This fixed point $g$ allows us to define
\begin{align}
    &f(t) := T^{t}g(t),\label{fg}\\
    &f_0 := f(0) = g(0).\label{set f_0}
\end{align}

Now we will show that $f$ is a mild solution of the 6-wave kinetic equation \eqref{eq:6wave}. 
By setting $t = 0$ in  \eqref{fixed point} and using \eqref{fg},\eqref{set f_0} we have
\begin{align}\label{f_+}
     f_+ &= f_0 + \int_0^\infty T^{-s}\C[f](s)~ds.
\end{align}    
Substituting \eqref{f_+} into \eqref{fixed point} and using \eqref{fg},  we get
\begin{align} \label{6.13}
        T^{-t}f(t) 
        & = f_0 + \int_0^t T^{-s}\C[f](s)~ds. 
\end{align}
Hence $f$ is a mild solution to the 6-wave kinetic equation \eqref{eq:6wave} with initial data $f_0 \in B(2R_{s})$.

Now we verify \eqref{scatter1_lim}. Going back to   \eqref{fixed point} and employing \eqref{fg}, we have 
    \begin{align}
        T^{-t}f(t) - f_+ = - \int_t^\infty T^{-s}\C[f](s)~ds,
    \end{align}
    and therefore 
\begin{align*}
        \lim\limits_{t \to \infty}\ld T^{-t}f(t) - f_+\rd = \lim\limits_{t \to \infty}\ld \int_t^\infty T^{-s}\C[f](s)~ds\rd = 0,
\end{align*}
thanks to \eqref{int_convergent}. 
This completes the proof of the result for $f_+$. To obtain the result for $f_-$, we instead define the map $\mathcal{A}:L^\infty(\R,\m) \to L^\infty(\R,\m)$ by $\mathcal{A}[g] = f_- + \Lambda_{-\infty,t}[g]$ and proceed with the same argument.
\end{proof}

\begin{proof}[Proof of Theorem \ref{thm:scattering 2}] Let $f_0 \in B(R_{s})$ and let $f$ be the mild solution to the 6-wave kinetic equation \eqref{eq:6wave} corresponding to $f_0$, whose existence is guaranteed by Theorem \ref{thm:wke well posed}. Then  for $t,\tau \in \R$ we have
    \begin{align}
      T^{-t} f(t)&= f_0+\int_0^t T^{-s} \C[f](s)\,ds\label{eq:t_sol},\\
      T^{-\tau} f(\tau)&= f_0+\int_0^\tau T^{-s} \C[f](s)\,ds.\label{eq:tau_sol}
    \end{align}
 Taking the difference of \eqref{eq:t_sol} and \eqref{eq:tau_sol}, we have
    \begin{align}\label{t,tau}
        \ld T^{-t}f(t) - T^{-\tau}f(\tau)\rd =\ld\int_\tau^t T^{-s}\C[f](s)~ds\rd \overset{t,\tau \to\infty}{\longrightarrow} 0,
    \end{align}
     thanks to \eqref{int_convergent}. This implies that  $\{T^{-t_m}f(t_m)\}_{m }$ is Cauchy for any sequence $\{t_m\}$ with $t_m \to \infty$.  Then, since $\m$ is complete, there is a unique limit $f_+ : = \lim_{m \rightarrow \infty} T^{-t_m}f(t_m)\in \m$, 
     which is independent of the choice of the sequence $\{t_m\}$ thanks to \eqref{t,tau}. Now we can use the sequential representation of the limit of a function to conclude 
\[\lim\limits_{t\to\infty}\ld T^{-t}f - f_+\rd= 0.\]
Finally we prove that $f_+ \in B(2R_s)$. This follows by taking the limit  $t \to \infty$ in \eqref{eq:t_sol} to get
    \begin{align}\label{u_+ formula}
        f_+ = f_0 + \int_0^\infty T^{-s}\C[f](s)~ds,
    \end{align}
and by using Lemma \ref{lem:lambda}.

This completes the proof of the result for $f_+$. The result for $f_-$ is proven in the same fashion. 
\end{proof}

Let us introduce the following notation for mappings between initial data and scattering states.

\begin{definition}\label{def_umaps}
Let $d\geq 1, \alpha,\beta > 0$. Let $f_0,f_{\pm} \in \m$. Then we write 
$$ 
\mathcal{U}_{0,\pm}(f_0) = f_{\pm}
$$ 
if there exists a unique mild solution $f$ to $\eqref{eq:6wave}$ corresponding to the initial data $f_0$ such that $\lim\limits_{t\rightarrow \pm\infty}\ld T^{-t}f(t) - f_\pm\rd = 0.$

Analogously, we write  
$$
 \mathcal{U}_{\pm,0}(f_\pm) = f_{0}
$$ if there exists a unique mild solution $f$ to $\eqref{eq:6wave}$ corresponding to the initial data $f_0$ such that $\lim\limits_{t\rightarrow \pm\infty}\ld T^{-t}f(t) - f_\pm\rd = 0.$
\end{definition}
Theorems  \ref{thm: scattering 1} and \ref{thm:scattering 2}  imply that the maps $\U_{0,\pm}$ and $\U_{\pm,0}$ are well-defined on $B(R_s)$:
\begin{align} 
    \U_{\pm,0}:
    \begin{cases} 
     B(R_{s}) \to \U_{\pm,0}(B(R_{s})) \subset B(2R_s), \\
     f_\pm \mapsto f_0 \label{U_+0}
    \end{cases} 
\end{align}
\begin{equation} 
    \U_{0,\pm}:
    \begin{cases}
    B(R_{s}) \to \U_{0,\pm}(B(R_{s})) \subset B(2R_s), \\
     f_0 \mapsto f_\pm.\label{U_0+}
\end{cases}
\end{equation}
We next show   that $\U_{0,\pm}$ and $\U_{\pm,0}$ are injective. 

\begin{theorem}\label{thm:inject of U}
    Let $d = 1,\alpha,\beta > 0$, and $0 < R_{s} \leq \frac{\alpha^{1/8}}{2^{\frac{7}{2}}C_{1,\beta}^{1/4}}$. Then $\U_{0,\pm}$, defined by \eqref{U_0+}, are injective.
\end{theorem}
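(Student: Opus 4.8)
The plan is to show that $\U_{0,\pm}$ is injective by exploiting the stability estimate \eqref{stability} from Theorem \ref{thm:wke well posed} together with the convergence $\ld T^{-t}f(t) - f_\pm\rd \to 0$ that defines the maps. Suppose $f_0, g_0 \in B(R_s)$ satisfy $\U_{0,+}(f_0) = \U_{0,+}(g_0) = h_+$ (the argument for $\U_{0,-}$ is identical, running $t \to -\infty$). Let $f, g$ be the corresponding mild solutions from Theorem \ref{thm:wke well posed}, which exist and obey $\lt T^{-t}f(t)\rt, \lt T^{-t}g(t)\rt \le 2R_s$ since $R_s$ satisfies the smallness hypothesis of that theorem. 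By the definition of $\U_{0,+}$ we have both $\ld T^{-t}f(t) - h_+\rd \to 0$ and $\ld T^{-t}g(t) - h_+\rd \to 0$ as $t \to \infty$.

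First I would apply the triangle inequality: for every $t$,
\begin{align*}
\ld T^{-t}f(t) - T^{-t}g(t)\rd \le \ld T^{-t}f(t) - h_+\rd + \ld h_+ - T^{-t}g(t)\rd,
\end{align*}
and the right-hand side tends to $0$ as $t \to \infty$. On the other hand, the stability estimate \eqref{stability} gives the \emph{uniform} bound
\begin{align*}
\lt T^{-t}f(t) - T^{-t}g(t)\rt \le 2\ld f_0 - g_0\rd,
\end{align*}
which in particular means $\sup_{t \in \R}\ld T^{-t}f(t) - T^{-t}g(t)\rd$ is a fixed finite number. To conclude $f_0 = g_0$ I would instead want a \emph{lower} bound on this supremum in terms of $\ld f_0 - g_0\rd$; the cleanest route is to evaluate at $t = 0$: since $T^{-0}$ is the identity, $\ld T^{-0}f(0) - T^{-0}g(0)\rd = \ld f_0 - g_0\rd$. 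But this quantity was just shown to be dominated by $\lim_{t\to\infty}\ld T^{-t}f(t) - T^{-t}g(t)\rd$ only if the map $t \mapsto \ld T^{-t}f(t) - T^{-t}g(t)\rd$ were monotone, which is not given. So the correct argument runs the other way: apply the stability-type estimate on the \emph{difference of the Duhamel formulas started from time $t$}, i.e. use that $T^{-t}f(t) - T^{-t}g(t) = \big(T^{-s}f(s) - T^{-s}g(s)\big)\big|_{s=t}$ plus the integral from $t$ to $\infty$ of $T^{-\tau}(\C[f] - \C[g])$, which vanishes since both converge; more precisely, from \eqref{6.13}-type identities write $T^{-t}f(t) - T^{-t}g(t) = (h_+ - \text{(tail)}_f) - (h_+ - \text{(tail)}_g)$ so that $\ld T^{-t}f(t) - T^{-t}g(t)\rd \le \int_t^\infty \ld T^{-s}(\C[f] - \C[g])(s)\rd ds$, and then use the contraction estimate \eqref{lambda_diff_bound} of Lemma \ref{lem:lambda} to bound the right side by $\tfrac14 \lt T^{-(\cdot)}f - T^{-(\cdot)}g\rt$. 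Taking the supremum over $t$ yields $\lt T^{-(\cdot)}f - T^{-(\cdot)}g\rt \le \tfrac14 \lt T^{-(\cdot)}f - T^{-(\cdot)}g\rt$, forcing $T^{-(\cdot)}f = T^{-(\cdot)}g$, hence $f_0 = T^{-0}f(0) = T^{-0}g(0) = g_0$.

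The main obstacle is setting up the correct Duhamel identity from a finite time $t$ to $+\infty$ so that Lemma \ref{lem:lambda}'s difference estimate applies with its contraction constant $\tfrac14 < 1$: one must verify that the tail integrals $\int_t^\infty T^{-s}\C[f](s)\,ds$ and $\int_t^\infty T^{-s}\C[g](s)\,ds$ are well-defined (guaranteed by \eqref{int_convergent}) and that subtracting the two representations of $h_+$ legitimately cancels the $h_+$ and $f_0$, $g_0$ boundary terms. Once the identity $T^{-t}f(t) - T^{-t}g(t) = -\Lambda_{t,\infty}[T^{-(\cdot)}f] + \Lambda_{t,\infty}[T^{-(\cdot)}g]$ is in hand, the rest is an immediate application of \eqref{lambda_diff_bound} and an absorption argument. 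For $\U_{\pm,0}$ the injectivity is essentially built into the definition (the map is only defined when the mild solution is unique), but if a standalone statement is wanted the same absorption argument applied to $\Lambda_{-\infty,t}$ or $\Lambda_{t,\infty}$ works verbatim.
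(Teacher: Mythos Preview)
Your final argument is correct and close in spirit to the paper's, though with a mild variation. The paper's route is to use the representation $f_+ = f_0 + \Lambda_{0,\infty}[T^{-(\cdot)}f]$ (from \eqref{u_+ formula}) at the single time $t=0$: subtracting the two copies of $h_+$ gives $f_0 - g_0 = \Lambda_{0,\infty}[T^{-(\cdot)}g] - \Lambda_{0,\infty}[T^{-(\cdot)}f]$, then Lemma~\ref{lem:lambda} yields $\ld f_0 - g_0\rd \le \tfrac14 \lt T^{-(\cdot)}f - T^{-(\cdot)}g\rt$, and finally the stability estimate \eqref{stability} closes the loop with $\le \tfrac12 \ld f_0 - g_0\rd$. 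Your route instead writes the identity $T^{-t}f(t) - T^{-t}g(t) = \Lambda_{t,\infty}[T^{-(\cdot)}g] - \Lambda_{t,\infty}[T^{-(\cdot)}f]$ for \emph{every} $t$, applies \eqref{lambda_diff_bound}, and takes the supremum in $t$ to absorb directly. This is slightly more self-contained, since it bypasses the appeal to \eqref{stability}; the paper's version trades that for working only at $t=0$. Both identities are justified by the same ingredients (mild-solution Duhamel plus \eqref{int_convergent}), so the difference is cosmetic.

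Two small remarks. First, when you invoke Lemma~\ref{lem:lambda}, the relevant radius is $\rho = 2R_s$ (since $T^{-(\cdot)}f, T^{-(\cdot)}g \in \overline{B}(2R_s)$ by Theorem~\ref{thm:wke well posed}), which is admissible precisely because $R_s \le \alpha^{1/8}/(2^{7/2}C_{1,\beta}^{1/4})$; it would be worth saying this explicitly. Second, your aside that injectivity of $\U_{\pm,0}$ is ``essentially built into the definition'' is a bit quick: the fixed points produced by Theorem~\ref{thm: scattering 1} for two different $f_+$'s could a priori land on the same $f_0$ without an additional argument, and indeed the paper proves that injectivity separately via an analogous absorption (using a stability estimate between the two fixed points). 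Your parenthetical suggestion to rerun the $\Lambda_{t,\infty}$ argument is the right idea there.
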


\begin{proof}
    Let $f_0,g_0 \in B(R_{s})$ be such that $\U_{0,+}(g_0) = \U_{0,+}(f_0)$. Then by \eqref{u_+ formula},
    \begin{align}
        f_0 - g_0 = \Lambda_{0,\infty}[T^{-t}g(t)] - \Lambda_{0,\infty}[T^{-t}f(t)].
    \end{align}
Hence, by Lemma \ref{lem:lambda} with $\rho = R_{s}$ we have
\begin{align*}
    \ld f_0 - g_0\rd  = \lt f_0 - g_0 \rt & = \lt \Lambda_{0,\infty}[T^{-t}g(t)] - \Lambda_{0,\infty}[T^{-t}f(t)]\rt \\
    & \leq \frac{1}{4}\lt T^{-t} f(t) - T^{-t}g(t)\rt\\
    &\leq \frac{1}{2}\ld f_0 - g_0\rd,
\end{align*}
where the last inequality follows from Theorem \ref{thm:wke well posed}.
This implies $f_0 = g_0$ in the sense of the norm $\ld\cdot\rd$, completing the proof of the injectivity of $\mathcal{U}_{0,+}$. One can prove the injectivity of $\mathcal{U}_{0,-}$ analogously.
\end{proof}

\begin{theorem} 
    Let $d = 1,\alpha,\beta > 0$, and $0 < R_{s} \leq \frac{\alpha^{1/8}}{2^{\frac{7}{2}}C_{1,\beta}^{1/4}}$. Then $\U_{\pm,0}$ defined by \eqref{U_+0} are injective.
\end{theorem}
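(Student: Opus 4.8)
The plan is to run the same contraction bookkeeping that proves Theorem~\ref{thm:inject of U}, but with the roles of the scattering state and the initial datum interchanged, using the representation of $f_\pm$ in terms of $f_0$ coming from the Duhamel identity together with a ``backward'' analogue of the stability estimate \eqref{stability}. I would treat $\mathcal{U}_{+,0}$ in detail; the case of $\mathcal{U}_{-,0}$ is obtained by replacing $\Lambda_{t,\infty}$ with $\Lambda_{-\infty,t}$ and $\eqref{f_+}$, $\eqref{fixed point}$ with their $t\to-\infty$ analogues throughout.

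First I would take $f_+, g_+ \in B(R_s)$ with $\mathcal{U}_{+,0}(f_+) = \mathcal{U}_{+,0}(g_+) =: f_0 \in B(2R_s)$, and let $f$, $g$ be the mild solutions of \eqref{eq:6wave} with initial datum $f_0$ produced by Theorem~\ref{thm: scattering 1}, so that $T^{-(\cdot)}f,\, T^{-(\cdot)}g \in \overline{B}(2R_s)$, with $f$ scattering to $f_+$ and $g$ scattering to $g_+$. Applying \eqref{f_+} to each solution gives $f_+ = f_0 + \Lambda_{0,\infty}[T^{-(\cdot)}f]$ and $g_+ = f_0 + \Lambda_{0,\infty}[T^{-(\cdot)}g]$, hence
\[
f_+ - g_+ = \Lambda_{0,\infty}[T^{-(\cdot)}f] - \Lambda_{0,\infty}[T^{-(\cdot)}g].
\]
Since $R_s \le \frac{\alpha^{1/8}}{2^{7/2}C_{1,\beta}^{1/4}}$ we have $2R_s \le \frac{\alpha^{1/8}}{2^{5/2}C_{1,\beta}^{1/4}}$, so Lemma~\ref{lem:lambda} applies with $\rho = 2R_s$ and yields $\ld f_+ - g_+\rd \le \tfrac14 \lt T^{-(\cdot)}f - T^{-(\cdot)}g\rt$.

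The remaining step is to control $\lt T^{-(\cdot)}f - T^{-(\cdot)}g\rt$ in terms of $\ld f_+ - g_+\rd$. For this I would use the fixed-point identity from the proof of Theorem~\ref{thm: scattering 1}, i.e. $T^{-t}f(t) = f_+ - \Lambda_{t,\infty}[T^{-(\cdot)}f]$ and $T^{-t}g(t) = g_+ - \Lambda_{t,\infty}[T^{-(\cdot)}g]$ (cf. \eqref{fixed point}); subtracting, taking the triple norm, and invoking the difference bound \eqref{lambda_diff_bound} of Lemma~\ref{lem:lambda} with $\rho = 2R_s$ gives $\lt T^{-(\cdot)}f - T^{-(\cdot)}g\rt \le \ld f_+ - g_+\rd + \tfrac14 \lt T^{-(\cdot)}f - T^{-(\cdot)}g\rt$, whence $\lt T^{-(\cdot)}f - T^{-(\cdot)}g\rt \le \tfrac43 \ld f_+ - g_+\rd$. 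Combining the two inequalities forces $\ld f_+ - g_+\rd \le \tfrac13 \ld f_+ - g_+\rd$, so $f_+ = g_+$ and $\mathcal{U}_{+,0}$ is injective; $\mathcal{U}_{-,0}$ is handled identically.

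I do not expect a genuine obstacle here — the proof is essentially a fixed-point identity unwound twice — and the only thing to watch is that every function appearing lies in a ball whose radius does not exceed $\frac{\alpha^{1/8}}{2^{5/2}C_{1,\beta}^{1/4}}$, which is exactly the admissible radius in Lemma~\ref{lem:lambda}; this is guaranteed because the hypothesis $R_s \le \frac{\alpha^{1/8}}{2^{7/2}C_{1,\beta}^{1/4}}$ leaves room for the doubling. As an alternative to the backward stability step, one may observe directly that $f$ and $g$ satisfy the same Duhamel equation \eqref{6.13} with datum $f_0$ while both remaining in $\overline{B}(2R_s)$, so $T^{-t}f(t) - T^{-t}g(t) = \Lambda_{0,t}[T^{-(\cdot)}f] - \Lambda_{0,t}[T^{-(\cdot)}g]$ and Lemma~\ref{lem:lambda} gives $\lt T^{-(\cdot)}f - T^{-(\cdot)}g\rt \le \tfrac14 \lt T^{-(\cdot)}f - T^{-(\cdot)}g\rt$, hence $f = g$ and therefore $f_+ = \lim_{t\to\infty} T^{-t}f(t) = \lim_{t\to\infty} T^{-t}g(t) = g_+$ by uniqueness of limits in $\m$.
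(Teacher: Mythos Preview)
Your proof is correct and your main argument (what you call the ``backward stability'' route) is essentially the same as the paper's: both identify $T^{-(\cdot)}f$ and $T^{-(\cdot)}g$ with the fixed points of the maps $h\mapsto f_+-\Lambda_{t,\infty}[h]$ and $h\mapsto g_+-\Lambda_{t,\infty}[h]$ in $\overline{B}(2R_s)$, derive the stability bound $\lt T^{-(\cdot)}f - T^{-(\cdot)}g\rt \le C\ld f_+ - g_+\rd$ from \eqref{lambda_diff_bound}, and then feed this into the identity $f_+ - g_+ = \Lambda_{0,\infty}[T^{-(\cdot)}f] - \Lambda_{0,\infty}[T^{-(\cdot)}g]$ (the paper records $C=2$, you get the sharper $C=4/3$, but either closes the loop).

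Your alternative route at the end is a genuine simplification not present in the paper: once $\mathcal{U}_{+,0}(f_+)=\mathcal{U}_{+,0}(g_+)=f_0$, both $T^{-(\cdot)}f$ and $T^{-(\cdot)}g$ lie in $\overline{B}(2R_s)$ and satisfy the \emph{same} forward Duhamel equation \eqref{6.13} with datum $f_0$, so \eqref{lambda_diff_bound} with $\rho=2R_s$ gives $f=g$ directly and hence $f_+=g_+$. This avoids the backward stability estimate entirely; the trade-off is that the paper's approach yields the stability inequality \eqref{U_+0 stability} as a by-product, valid for \emph{all} $f_+,g_+\in B(R_s)$ (not only those with equal image), which may be of independent interest.
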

\begin{proof}
    Let $f_+,g_+ \in B(R_s)$.
    
    First, thanks to the condition on $R_{s}$, we observe that the proof of Theorem \ref{thm: scattering 1} tells us that each of the maps 
    \begin{align*}
        \mathcal{A}_{f+}(h) := f_+ - \Lambda_{t,\infty}[h],\\
        \mathcal{A}_{g+}(h) := g_+ - \Lambda_{t,\infty}[h],
    \end{align*}
    has a unique fixed point denoted by $\tilde{f}, \tilde{g} \in \overline{B}(2R_{s})$, respectively. 
    That is, 
    \begin{align*}
       \tilde{f}(t) &= f_+ - \Lambda_{t,\infty}[\tilde{f}],\\
       \tilde{g}(t) &= g_+ - \Lambda_{t,\infty}[\tilde{g}].
    \end{align*}

   Taking the difference, by Lemma \ref{lem:lambda} we have
    \begin{align*}
        \lt \tilde{f} - \tilde{g}\rt &\leq \ld f_+ - g_+\rd + \lt \Lambda_{t,\infty}[\tilde{f}] - \Lambda_{t,\infty}[\tilde{g}]\rt\\
        & \leq \ld f_+ - g_+\rd + \frac{1}{4}\lt \tilde{f} - \tilde{g}\rt,
    \end{align*}
    which implies the following stability estimate: 
    \begin{align}\label{U_+0 stability}
        \lt\tilde{f} - \tilde{g}\rt \leq 2\ld f_+ - g_+\rd.
    \end{align}

    With this estimate in hand, we can proceed as in the proof of Theorem \ref{thm:inject of U}. Assume additionally that $\U_{+,0}(f_+) = \U_{+,0}(g_+)$. 
    By the proof of Theorem \ref{thm: scattering 1}, we also have 
    \begin{align*}
        \U_{+,0}(f_+) &= f_+ - \Lambda_{0,\infty}[\tilde{f}],\\
        \U_{+,0}(g_+) &= g_+ - \Lambda_{0,\infty}[\tilde{g}].
    \end{align*}

   Therefore, using the fact that $\U_{+,0}(g_+) = \U_{+,0}(f_+)$ and applying Lemma \ref{lem:lambda}
   \begin{align*}
       \ld f_+ - g_+\rd &= \lt \Lambda_{0,\infty}[\tilde{g}] - \Lambda_{0,\infty}[\tilde{f}]\rt\\
       & \leq \frac{1}{4}\lt \tilde{f} - \tilde{g}\rt\\
       & \leq \frac{1}{2}\ld f_+ - g_+\rd,
   \end{align*}
   where to obtain the last inequality we use \eqref{U_+0 stability}.
    This implies $f_+ = g_+$ in the sense of the norm $||\cdot||_{\alpha,\beta}$, completing the proof of the injectivity of $\mathcal{U}_{+,0}$. One can prove the injectivity of $\mathcal{U}_{-,0}$ analogously.
\end{proof}

\begin{proof}[Proof of Theorem \ref{thm: main scattering}]
   (i)  Let $f_{-} \in B(\frac{R_s}{2})$. By Theorem \ref{thm: scattering 1}, there exists initial data $f_0 \in B(R_s)$ such that the corresponding solution $f$ of \eqref{eq:6wave} satisfies $\lim\limits_{t \to  -\infty}\ld T^{-t} f(t) - f_-\rd = 0$. Now, by Theorem \ref{thm:scattering 2}, there exists $f_+ \in B(2R_s)$ such that $\lim\limits_{t \to  +\infty}\ld T^{-t}f(t) - f_+\rd = 0$. 

   (ii)  The proof pertaining to the mapping $g_+ \mapsto g_-$ follows identically. \end{proof}

\appendix
\section{Linear decomposition} \label{lin_decomp}

\proof[Proof of \eqref{G1 decomp}]
We begin with the decomposed form the right-hand side and show that it is equivalent to the difference.
\begin{align*} 
    G_1&[f - \widetilde{f},g,h,k,l,m] + G_1[\widetilde{f},g,h,k - \widetilde{k},l,m] + G_1[\widetilde{f},g,h,\widetilde{k},l - \widetilde{l}, m]\nonumber\\
    \qquad &+ G_1[\widetilde{f},g,h,\widetilde{k}, \widetilde{l}, m - \widetilde{m}] + G_1[\widetilde{f},g - \widetilde{g},h - \widetilde{h},\widetilde{k},\widetilde{l},\widetilde{m}]\\
    & = \int_{\R^{5d}}\delta(\Sigma)\delta(\Omega)\bigg[k\,l\,m\,(f - \widetilde{f})(g + h) + (k - \widetilde{k})l\,m\,\widetilde{f}\,(g + h) + \widetilde{k}(l - \widetilde{l})m\,\widetilde{f}(g + h)\\
    &\qquad + \widetilde{k}\,\widetilde{l}(m - \widetilde{m})\widetilde{f}\,(g + h) + \widetilde{k}\,\widetilde{l}\,\widetilde{m}\,\widetilde{f}(g - \widetilde{g} + h - \widetilde{h})\bigg]dv_1dv_2dv_3dv_4dv_5\\
    & = \int_{\R^{5d}}\delta(\Sigma)\delta(\Omega)\bigg[k\,l\,m\,(f - \widetilde{f}) + (k - \widetilde{k})l\,m\,\widetilde{f} + \widetilde{k}(l - \widetilde{l})m\,\widetilde{f} + \widetilde{k}\,\widetilde{l}\,(m - \widetilde{m})\widetilde{f}\bigg](g + h)\\
    &\qquad + \widetilde{k}\,\widetilde{l}\,\widetilde{m}\,\widetilde{f}\,(g - \widetilde{g} + h - \widetilde{h})dv_1dv_2dv_3dv_4dv_5\\
    & = \int_{\R^{5d}}\delta(\Sigma)\delta(\Omega)\bigg[k\,l\,m\,f - \widetilde{k}\,\widetilde{l}\,\widetilde{m}\,\widetilde{f}\bigg](g + h)+ \widetilde{k}\,\widetilde{l}\,\widetilde{m}\,\widetilde{f}(g - \widetilde{g} + h - \widetilde{h})dv_1dv_2dv_3dv_4dv_5\\
    & = \int_{\R^{5d}}\delta(\Sigma)\delta(\Omega)\bigg[k\,l\,m\,f\,(g + h) - \widetilde{k}\,\widetilde{l}\,\widetilde{m}\,\widetilde{f}\,(\widetilde{g} + \widetilde{h})\bigg]dv_1dv_2dv_3dv_4dv_5\\
    & = G_1[f,g,h,k,l,m] - G_1[\widetilde{f},\widetilde{g},\widetilde{h},\widetilde{k},\widetilde{l},\widetilde{m}]
\end{align*}
 The decompositions \eqref{G2 decomp},\eqref{L1 decomp}, and \eqref{L2 decomp} for $G_2, L_1,$ and $L_2$ follow similarly.

\section{Estimates}
\begin{lemma}[\cite{am24}]\label{lem:time}
    Let $x_0,u_0 \in \R^d$ with $u_0 \neq 0$ and $\alpha > 0$. then the following estimate holds \[\int_{-\infty}^\infty e^{-\alpha|x_0 + s u_0|^2}ds \leq \sqrt{\pi}\alpha^{-1/2}|u_0|^{-1}.\]
\end{lemma}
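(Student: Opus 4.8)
\textbf{Proof proposal for Lemma \ref{lem:time}.}
The plan is to reduce the integral to a one‑dimensional Gaussian integral by completing the square in the scalar variable $s$. First I would expand
\[
|x_0 + s u_0|^2 = |u_0|^2 s^2 + 2 s\, (x_0 \cdot u_0) + |x_0|^2
= |u_0|^2 \Big( s + \tfrac{x_0 \cdot u_0}{|u_0|^2} \Big)^2 + \Big( |x_0|^2 - \tfrac{(x_0\cdot u_0)^2}{|u_0|^2} \Big),
\]
which is legitimate since $u_0 \neq 0$. By the Cauchy--Schwarz inequality $(x_0 \cdot u_0)^2 \le |x_0|^2 |u_0|^2$, so the $s$-independent remainder $|x_0|^2 - (x_0\cdot u_0)^2/|u_0|^2$ is nonnegative, and therefore $e^{-\alpha(|x_0|^2 - (x_0\cdot u_0)^2/|u_0|^2)} \le 1$.

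Next I would pull this factor out of the integral and bound it by $1$, leaving
\[
\int_{-\infty}^{\infty} e^{-\alpha |x_0 + s u_0|^2}\, ds \le \int_{-\infty}^{\infty} e^{-\alpha |u_0|^2 ( s + \frac{x_0\cdot u_0}{|u_0|^2})^2}\, ds.
\]
Then the substitution $t = \sqrt{\alpha}\,|u_0|\,\big(s + \tfrac{x_0\cdot u_0}{|u_0|^2}\big)$, with $ds = (\sqrt{\alpha}\,|u_0|)^{-1}\, dt$, turns the right-hand side into $(\sqrt{\alpha}\,|u_0|)^{-1}\int_{-\infty}^{\infty} e^{-t^2}\, dt = \sqrt{\pi}\,\alpha^{-1/2}\,|u_0|^{-1}$, using the standard evaluation $\int_{\R} e^{-t^2}\,dt = \sqrt{\pi}$. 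This gives exactly the claimed bound.

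There is no real obstacle here: the only point requiring a moment's care is to note that the completion of the square and the change of variables are valid precisely because $u_0 \neq 0$, and that the cross-term contributes a genuine translation (not a scaling) so that the Cauchy--Schwarz step is what produces the clean constant. The inequality is in fact an equality when $x_0$ is parallel to $u_0$, so the bound is sharp.
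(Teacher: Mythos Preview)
Your argument is correct: completing the square, discarding the nonnegative remainder via Cauchy--Schwarz, and substituting to reduce to the standard Gaussian integral $\int_\R e^{-t^2}\,dt = \sqrt{\pi}$ gives exactly the stated bound. The paper does not actually prove this lemma---it is quoted from \cite{am24} without proof---so there is nothing in the paper to compare against, but your proof is the standard one.
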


\begin{lemma}\label{lem:conv}
    Let $\beta > 0$, 
     $q \in (-2d,\infty)$. Then the following convolution estimate holds: 
    For any $v \in \R^d$, we have \[\int_{\R^d}(|v_1 - v| + |v_2 - v|)^qe^{-\beta(|v_1|^2 + |v_2|^2)}dv_1dv_2 \leq C_{d,\beta}(1 + |v|^{q+}),\]
    where 
    \begin{align}\label{conv_constant}
        C_{d,\beta} = 2^{q/2}C_d\left(\beta^{-d}  + \frac{1}{2d + q}\right)\1_{q \leq 0} +   2^{3q - 2}C_d\beta^{-q/2 + d}\Gamma\left(\frac{q + d}{2}\right)\1_{q > 0}.
    \end{align}
\end{lemma}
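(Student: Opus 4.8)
The plan is to bound $J(v) := \int_{\R^d\times\R^d}(|v_1-v|+|v_2-v|)^q\, e^{-\beta(|v_1|^2+|v_2|^2)}\,dv_1\,dv_2$ (reading the domain in the statement as $\R^d\times\R^d$) by treating the two regimes $q\le 0$ and $q>0$ separately. In the first regime the only difficulty is the integrable singularity of the weight along $\{v_1=v_2=v\}$, and the resulting bound is $v$-independent, consistent with $|v|^{q^+}=1$; in the second, the weight is bounded but grows, and it is this growth that produces the factor $|v|^{q^+}=|v|^q$ on the right-hand side.

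For $q\le 0$ I would first use that $s\mapsto s^q$ is nonincreasing on $(0,\infty)$ together with $|v_1-v|+|v_2-v|\ge(|v_1-v|^2+|v_2-v|^2)^{1/2}$ to dominate the weight by $(|v_1-v|^2+|v_2-v|^2)^{q/2}$. Writing $W=(v_1,v_2)\in\R^{2d}$ and $\mathbf v=(v,v)$, this bounds $J(v)$ by the $2d$-dimensional convolution-type integral $\int_{\R^{2d}}|W-\mathbf v|^q e^{-\beta|W|^2}\,dW$. I would split this over $\{|W-\mathbf v|\le 1\}$ and its complement: on $\{|W-\mathbf v|\le 1\}$ I drop the Gaussian and translate, getting $\int_{|Z|\le1}|Z|^q\,dZ$, which converges precisely because $q>-2d$ and equals a multiple of $(2d+q)^{-1}$; on $\{|W-\mathbf v|>1\}$ I bound $|W-\mathbf v|^q\le1$ and extend to all of $\R^{2d}$, getting $\int_{\R^{2d}}e^{-\beta|W|^2}\,dW=(\pi/\beta)^d$, a multiple of $\beta^{-d}$. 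The sum is independent of $v$, as required.

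For $q>0$ I would instead use $|v_i-v|\le|v_i|+|v|$ and the elementary inequality $(a+b)^q\le 2^q(a^q+b^q)$, applied a couple of times, to dominate the weight by a $q$-dependent constant times $|v_1|^q+|v_2|^q+|v|^q$. After Fubini, the $|v|^q$ piece contributes $|v|^q\big(\int_{\R^d}e^{-\beta|v_1|^2}\,dv_1\big)^2$, and each $|v_i|^q$ piece factors into a Gaussian moment $\int_{\R^d}|v_1|^q e^{-\beta|v_1|^2}\,dv_1$ — computed in polar coordinates as a constant times $\beta^{-(q+d)/2}\Gamma(\tfrac{q+d}{2})$ — times a bare Gaussian integral. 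Both contributions are then of the form $C_{d,\beta}(1+|v|^q)$. Absorbing the powers of $2$, the sphere areas $|\mathbb S^{d-1}|$ and $|\mathbb S^{2d-1}|$, the Beta/Gamma factors, and the powers of $\pi$ into a single dimensional constant $C_d$ yields the explicit form of $C_{d,\beta}$ in \eqref{conv_constant}.

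The step requiring the most care is the lower range $-2d<q\le-d$ within the case $q\le0$: one cannot reduce the double integral to a product of two single-variable integrals there, since $\int_{\R^d}|v_1-v|^q e^{-\beta|v_1|^2}\,dv_1$ already diverges once $q\le-d$. Passing to the genuinely $2d$-dimensional quantity $|W-\mathbf v|^q$, whose singularity at $W=\mathbf v$ is integrable over $\R^{2d}$ if and only if $q>-2d$, is exactly the reduction that makes the argument go through, and it also explains why the hypothesis is $q>-2d$ rather than $q>-d$; the remainder is routine bookkeeping of constants.
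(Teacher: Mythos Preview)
Your proposal is correct and follows essentially the same route as the paper: for $q\le 0$ you pass to the $2d$-dimensional radial variable $|W-\mathbf v|$ and split at the unit ball, and for $q>0$ you expand $(|v_1-v|+|v_2-v|)^q$ via $|v_i-v|\le |v_i|+|v|$ and reduce to Gaussian moments. Your closing remark on why the $q\le -d$ subrange forces the genuinely $2d$-dimensional treatment is a useful addition not made explicit in the paper.
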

\begin{proof}
    Define $\mathbf{u} = \begin{pmatrix}v_1 - v\\v_2 - v\end{pmatrix}$. First consider the case where $q \in (-2d,0]$.\\ Since $\displaystyle\int_{\R^d}e^{-\beta(|v_1|^2 + |v_2|^2)}dv_1dv_2 \leq C_d\beta^{-d}$, we have
    \begin{align*}
        \int_{\R^{2d}}(|v_1 - v| + |v_2 - v|)^q&e^{-\beta(|v_1|^2 + |v_2|^2)}dv_1dv_2 \\
        &\leq 2^{q/2}\int_{\R^{2d}}(|v_1 - v|^2 + |v_2 - v|^2)^{q/2}e^{-\beta(|v_1|^2 + |v_2|^2)}dv_1dv_2 \\
        &\leq 2^{q/2}\left(\int_{|\mathbf{u}| > 1}e^{-\beta(|v_1|^2 + |v_2|^2)}dv_1dv_2 + \int_{|\mathbf{u}| \leq 1}|\mathbf{u}|^qdv_1dv_2\right)\\
        & \leq 2^{q/2}\left(C_d\beta^{-d} + \int_{|\mathbf{y}|  < 1}|\mathbf{y}|^qd\mathbf{y}\right)\\
        & = 2^{q/2}C_d\left(\beta^{-d} + \int_0^1r^{2d-1 + q}dr\right)\\
        & = 2^{q/2}C_{d}\left(\beta^{-d} + \frac{1}{2d + q}\right).
    \end{align*}
    Now consider $q > 0$. Then 
    \begin{align*}
        (|v - v_1| + |v - v_2|)^q \leq (2|v| + |v_1| + |v_2|)^q \leq 2^{2(q-1)}\left(|v|^q + |v_1|^q + |v_2|^q\right).
    \end{align*}
    Then
    \begin{align*}
        \int_{\R^{2d}}& (|v - v_1| + |v - v_2|)^q e^{-\beta(|v_1|^2 + |v_2|^2)}dv_1dv_2\\
        & \leq 2^{3q-2}\int_{\R^{2d}}(|v|^q + |v_1|^q + |v_2|^q)e^{-\beta(|v_1|^2 + |v_2|^2)}dv_1dv_2\\\\
        & \leq 2^{3q-2}\left[2\left(\int_{\R^{2d}}|v_1|^qe^{-\beta|v_1|^2 }dv_1\right)\left(\int_{\R^d}e^{-\beta|v_2|^2}dv_2\right) + |v|^q\left(\int_{\R^{d}}e^{-\beta|v_1|^2}dv_1\right)^2\right]\\
        & \leq 2^{3q-2}\left[2C_d\beta^{-d/2}\int_{\R^d}|v_1|^qe^{-\beta|v_1|^2}dv_1 + |v|^qC_d^2\beta^{-d}\right]\\
        & \leq2^{3q-2}\left[2C_d\beta^{-d/2}\int_0^\infty r^{q+d-1}e^{-\beta r^2}dr + |v|^qC_d^2\beta^{-d}\right]\\
        & \leq 2^{3q-2}\left[2C_d\beta^{-d/2}\left(\frac{1}{2}\beta^{-(q+d)/2}\Gamma\left(\frac{q + d}{2}\right)\right) + |v|^qC_d^2\beta^{-d}\right]\\
        & \leq 2^{3q-2}\left[C_d\beta^{-(q/2 + d)}\Gamma\left(\frac{q + d}{2}\right) + |v|^qC_d^2\beta^{-d}\right]\\
        & \leq 2^{3q - 2}C_d\beta^{-q/2 + d}\Gamma\left(\frac{q + d}{2}\right)(1 + |v|^q)
    \end{align*}
\end{proof}

\end{document}